\documentclass{amsart}

\usepackage{amssymb,amsmath,amsthm,latexsym,booktabs,pxfonts,youngtab,hyperref,bbm}

\theoremstyle{definition}
\newtheorem{definition}{Definition}[section]
\newtheorem{remark}[definition]{Remark}
\newtheorem{openproblem}[definition]{Open Problem}

\theoremstyle{plain}
\newtheorem{lemma}[definition]{Lemma}
\newtheorem{proposition}[definition]{Proposition}
\newtheorem{theorem}[definition]{Theorem}
\newtheorem{conjecture}[definition]{Conjecture}

\newcommand{\field}{\mathbb{Q}}

\allowdisplaybreaks

\begin{document}

\title{Jordan quadruple systems}

\author{Murray Bremner}

\address{Department of Mathematics and Statistics, University of Saskatchewan, Canada}

\email{bremner@math.usask.ca}

\author{Sara Madariaga}

\address{Department of Mathematics and Statistics, University of Saskatchewan, Canada}

\email{madariaga@math.usask.ca}

\dedicatory{To our friend and colleague Luiz Antonio Peresi on his retirement}

\thanks{Murray Bremner was supported by a Discovery Grant from NSERC, the Natural Sciences and Engineering Research Council.
Sara Madariaga was supported by a Postdoctoral Fellowship from PIMS, the Pacific Institute for Mathematics Sciences.}

\subjclass[2010]{Primary 17C05. Secondary 17A42, 17C50, 17C55, 18D50.}

\keywords{Jordan tetrad, polynomial identities, Gr\"obner bases, universal associative envelopes, Wedderburn decompositions.}

\begin{abstract}
We define Jordan quadruple systems by the polynomial identities of degrees 4 and 7 satisfied by
the Jordan tetrad $\{ a,b,c,d \} = abcd + dcba$ as a quadrilinear operation on associative algebras.
We find further identities in degree 10 which are not consequences of the
defining identities.
We introduce four infinite families of finite dimensional Jordan quadruple systems, and construct
the universal associative envelope for a small system in each family.
We obtain analogous results for the anti-tetrad $[a,b,c,d] = abcd - dcba$.
Our methods rely on computer algebra, especially linear algebra on large matrices,
the LLL algorithm for lattice basis reduction, representation theory of the symmetric group,
noncommutative Gr\"obner bases, and Wedderburn decompositions of associative algebras.
\end{abstract}

\maketitle

\allowdisplaybreaks


\section{Introduction}

In this paper we study the quadrilinear operations $\{a,b,c,d\} = abcd + dcba$ and $[a,b,c,d] = abcd - dcba$
in associative algebras.
The first is the Jordan tetrad which plays an important role in the structure theory of
Jordan algebras \cite{MZ,Z}.
The second is the anti-tetrad, which seems not to have been studied until now.

\subsection{Motivation}

In an associative algebra for $n \ge 2$ we define the $n$-tad to be this $n$-ary multilinear operation:
$\{ a_1, \dots, a_n \} = a_1 \cdots a_n \, + \, a_n \cdots a_1$.
For $n = 2$ we obtain the Jordan product $\{ a, b \} = ab + ba$ satisfying commutativity and the Jordan identity:
  \[
  \{ a, b \} \equiv \{ b, a \},
  \qquad
  \{ \{ \{ a, a \}, b \}, a \} \equiv \{ \{ a, a \}, \{ b, a \} \}.
  \]
There are further ``special'' identities satisfied by the Jordan product in every associative algebra
which do not follow from the defining identities; the simplest occur in degrees 8 and 9 and are
called the Glennie identities \cite{Glennie1,Glennie2}.
A Jordan algebra is ``special'' if it can be represented as a subspace of an associative algebra closed
under the Jordan product; otherwise, it is ``exceptional''.
If a special Jordan algebra is finite dimensional then its universal associative enveloping algebra is also
finite dimensional.
A survey of the role of identities in Jordan theory has been given by McCrimmon \cite{M2}.
For the structure and representation theory of finite dimensional Jordan algebras, see Jacobson \cite{Jacobson}.
For the modern theory including infinite dimensional algebras, see McCrimmon \cite{Mbook}.

For $n = 3$ we obtain the Jordan triple product $abc + cba$; in every associative algebra, this operation satisfies identities
which define Jordan triple systems (JTS):
  \[
  \{ a, b, c \} \equiv \{ c, b, a \},
  \qquad
  \{ \{ a, b, c \}, d, e \} \equiv
  \{ \{ a, d, e \}, b, c \}
  - \{ a, \{ b, e, d \}, c \}
  + \{ a, b, \{ c, d, e \} \}.
  \]
In contrast to the Jordan identity these identities are multilinear.
There are special identities in higher degree: identities satisfied by the Jordan triple product in every associative algebra
but which do not follow from the defining identities \cite{LM,M1}.
For the classification of finite dimensional JTS, see \cite{Loos,Meyberg,Neher}
and for their universal associative envelopes, see \cite{Loos2}.

Closely related to Jordan triple systems are the anti-Jordan triple systems (AJTS), see \cite{FF}.
Finite dimensional simple AJTS have been classified \cite{Bashir}.
These systems are defined by identities satisfied by the anti-Jordan triple product
$abc - cba$ in every associative algebra:
  \[
  \{ a, b, c \} + \{ c, b, a \} \equiv 0,
  \qquad
  \{ \{ a, b, c \}, d, e \} \equiv
  \{ \{ a, d, e \}, b, c \}
  + \{ a, \{ b, e, d \}, c \}
  + \{ a, b, \{ c, d, e \} \}.
  \]
Universal associative envelopes for one infinite family of simple AJTS have been constructed \cite{Elgendy2}.

At the next step $n = 4$ we obtain  the Jordan tetrad $abcd + dcba$, which arises in the study of symmetric elements
of associative algebras.
Let $A_n$ be the free unital associative algebra on generators $x_1, \dots, x_n$ over a field $F$.
The involution $\ast\colon A_n \to A_n$ defined on monomials by
$( x_{i_1} \cdots x_{i_d} )^\ast = x_{i_d} \cdots x_{i_1}$
satisfies $(ab)^\ast = b^\ast a^\ast$ and $(a^\ast)^\ast = a$.
The subspace $H_n = \{ \, a \in A_n \mid a^\ast = a \, \}$ of symmetric elements is a Jordan algebra under the
Jordan product.
Let $SJ_n$ be the free special Jordan algebra: the Jordan subalgebra of $H_n$ generated by
$x_1, \dots, x_n$.
Cohn \cite{Cohn1,Cohn2} has shown that $SJ_n = H_n$ for $n \le 3$;
but for $n \ge 4$, $SJ_n \ne H_n$ and $H_n$ is generated as a Jordan algebra by $x_1, \dots, x_n$ and the tetrads
$x_{i_1} x_{i_2} x_{i_3} x_{i_4} + x_{i_4} x_{i_3} x_{i_2} x_{i_1}$ for $1 \le i_1 < i_2 < i_3 < i_4 \le n$.
Since $A_n$ is unital, $H_n$ is generated by $x_1, \dots, x_n$ using the tetrad as a quadrilinear operation.
From the tetrad we recover the Jordan product by setting two arguments to 1, and so
this operation provides the natural algebraic structure on $H_n$ for $n \ge 4$.

\begin{definition}
The \textbf{tetrad} and \textbf{anti-tetrad} are respectively these quadrilinear operations on associative algebras:
  \[
  \{a,b,c,d\} = abcd + dcba,
  \qquad
  [a,b,c,d] = abcd - dcba.
  \]
\end{definition}

\subsection{Outline}

In \S\ref{sectioncomputational} we recall basic results from the representation theory of the symmetric group,
emphasizing a computational point of view, with a focus on applications to polynomial identities.
In \S\ref{sectiondegree7} we use computer algebra to determine a complete set of generators for the
multilinear polynomial identities of degrees 4 and 7 satisfied by the tetrad in every associative algebra.
These identities define Jordan quadruple systems (JQS).
In \S\ref{sectiondegree10} we use representation theory to show that there are further ``special''
identities in degree 10 satisfied by the tetrad in every associative algebra which do not follow from the defining identities for JQS.
We use the LLL algorithm for lattice basis reduction to obtain five explicit nonlinear special identities.
In \S\ref{sectionUtetrad} we introduce four infinite families of finite dimensional JQS.
For one system in each family, we use noncommutative Gr\"obner bases to construct its universal
associative envelope; in each case the envelope is finite dimensional, and we use the Wedderburn decomposition
of an associative algebra to classify the finite dimensional irreducible representations.
In \S\S\ref{sectionAJQS}--\ref{sectionUantitetrad} we describe analogous results for the anti-tetrad.
Throughout the paper we suggest a number of open problems as possible directions for further research.

\subsection{Conventions}

Unless otherwise indicated, all computations are performed with the computer algebra system \texttt{Maple}
using arithmetic over either the ring $\mathbb{Z}$ of integers or the field $\mathbb{Q}$ of rational numbers.
To save computer memory, we often use arithmetic over the finite field $\mathbb{F}_p$ for some prime $p$,
followed by rational reconstruction to recover results over $\mathbb{Z}$.
We use the symbol $\equiv$ to indicate that an equation holds for all values of the arguments.


\section{Computational methods for polynomial identities} \label{sectioncomputational}

In this section we review rather informally the structure theory of the group algebra $\field S_n$ from an algorithmic
point of view, with a focus on applications to polynomial identities.
For a more detailed exposition, see \cite[\S 5]{BP2011}.

\subsection{Representation matrices} \label{subsectionrepmat}

If $\mathbb{F}$ is a field of characteristic 0 or $p > n$, then the group algebra $\mathbb{F} S_n$ is semisimple, and
decomposes as the direct sum of simple two-sided ideals, each isomorphic to a full matrix algebra:
  \[
  R\colon \mathbb{F} S_n \xrightarrow{\quad\approx\quad} \bigoplus_\lambda M_{d_\lambda }(\mathbb{F})
  \]
The sum is over all partitions $\lambda$ of $n$, and we write $R_\lambda\colon \mathbb{F} S_n \to M_{d_\lambda }(\mathbb{F})$
for the projection onto component $\lambda$.
The dimension $d_\lambda $ of the irreducible representation $[\lambda]$ can be computed from the
Young diagram of $\lambda$ using the hook formula.
Given any permutation $\sigma \in S_n$, the $d_\lambda \times d_\lambda$ representation matrix $R_\lambda(\sigma)$
in the natural representation has entries in $\{0,1,-1\}$ and can be efficiently computed using the methods
of \cite{Clifton} and \cite[Figure 1]{BP2011}.

We study multilinear polynomial identities for nonassociative quadrilinear operations.
Monomials in such an operation have degrees $n \equiv 1$ (mod 3).
By an association type in degree $n$ we mean a placement of operation symbols in a sequence of $n$ arguments
(without specifying the arguments).
By the identity monomial for an association type in degree $n$ we mean the monomial with the identity permutation
$a_1 \cdots a_n$ of the arguments.
In general, a monomial in degree $n$ consists of an association type applied to a permutation of the arguments.
If there are $t = t(n)$ distinct association types in degree $n$, totally ordered in some way,
then any multilinear polynomial $I$
of degree $n$ can be written as a sum of $t$ components $I_1 + \cdots + I_t$; in each component, the terms differ only by
the permutation of the arguments.
We can therefore regard $I$ as an element of the direct sum of $t$ copies of the group algebra, $( \mathbb{F} S_n )^t$,
on which $S_n$ acts by left multiplication.
For each partition $\lambda$, we apply the projection $R_\lambda$ to each component $I_1, \dots, I_t$ to obtain
a sequence of $t$ matrices of size $d_\lambda \times d_\lambda$ which we combine horizontally into a matrix of size
$d_\lambda \times t d_\lambda$.
This is the representation matrix for the component of $I$ in partition $\lambda$.
The row canonical form (RCF) of this matrix is the normal form of $I$ for partition $\lambda$.
(We assume that zero rows have been removed from the RCF, so that a matrix in RCF always has full rank.)
Each row of the RCF generates a submodule of $( \mathbb{F} S_n )^t$ isomorphic to $[\lambda]$.
Hence the rank of this matrix is the multiplicity of $[\lambda]$ in the submodule of $( \mathbb{F} S_n )^t$ generated by $I$.

More generally, we consider a sequence $I^{(1)}, \dots, I^{(s)}$ of $s$ multilinear identities in degree $n$ and
the $s d_\lambda \times t d_\lambda$ matrix in which the $(i,j)$ block is the image under $R_\lambda$ of the terms of the
$i$-th identity in the $j$-th association type.
The rows of the RCF of this matrix provide a canonical set of generators for the isotypic component of type $[\lambda]$
generated by $I^{(1)}, \dots, I^{(s)}$ in $( \mathbb{F} S_n )^t$.

\subsection{Polynomial identities}

The process of finding a complete set of generators for the $S_n$-module of multilinear identities in degree $n$ satisfied by
the tetrad $\{a_1,a_2,a_3,a_4\}$ consists of three steps.
(For the anti-tetrad, the process is similar, but we must also keep track of sign changes resulting from reversal.)
These computations take place in the multilinear subspaces $\mathrm{Quad}(n)$ of degree $n$ in
the free quaternary algebra with one operation satisfying
$\{a_1,a_2,a_3,a_4\} - \{a_4,a_3,a_2,a_1\} \equiv 0$.
Since $\mathrm{Quad}(n)$ is also a left $S_n$-module (permutations act on the subscripts),
we can regard it as the degree $n$ component of the symmetric operad $\mathrm{Quad}$
generated by one quaternary operation for which the action of $S_4$ is given by the symmetry in degree 4;
that is, the generating space $\mathrm{Quad}(4)$ is the quotient of the regular $S_4$-module $\field S_4$
by the left ideal generated by $a_1a_2a_3a_4 - a_4a_3a_2a_1$.

\subsubsection*{Step 1}

We distinguish two types of identities satisfied by the tetrad in degree $n$ which are consequences of known identities
in lower degrees.

\smallskip
\noindent
$\bullet$ Type 1: Symmetries.
Since the tetrad satisfies symmetry in degree 4,
we use this to reduce the number of association types in degrees $n > 4$.
For example, in degree 7 we reduce the number of association types from 4 to 2 as follows:
  \begin{align*}
  &
  \{ a_1, a_2, \{ a_3, a_4, a_5, a_6 \}, a_7 \}
  \longrightarrow
  \{ a_7, \{ a_3, a_4, a_5, a_6 \}, a_2, a_1 \},
  \\
  &
  \{ a_1, a_2, a_3, \{ a_4, a_5, a_6, a_7 \} \}
  \longrightarrow
  \{ \{ a_4, a_5, a_6, a_7 \}, a_3, a_2, a_1 \}.
  \end{align*}
This does not eliminate all the identities in degree $n$ which follow from symmetry in degree 4,
since for each association type there remain identities of the form $\iota - \tau \equiv 0$
where $\iota$ is the identity monomial and $\tau$ is a monomial in the same association type
obtained from $\iota$ by a single application of symmetry.
For example, for one of the association types in degree 10 we have these identities:
  \begin{align*}
  &
  \{ \{ a_1, a_2, a_3, a_4 \}, a_5, a_6, \{ a_7, a_8, a_9, a_{10} \} \}
  -
  \{ \{ a_4, a_3, a_2, a_1 \}, a_5, a_6, \{ a_7, a_8, a_9, a_{10} \} \}
  \equiv 0,
  \\
  &
  \{ \{ a_1, a_2, a_3, a_4 \}, a_5, a_6, \{ a_7, a_8, a_9, a_{10} \} \}
  -
  \{ \{ a_1, a_2, a_3, a_4 \}, a_5, a_6, \{ a_{10}, a_9, a_8, a_7 \} \}
  \equiv 0,
  \\
  &
  \{ \{ a_1, a_2, a_3, a_4 \}, a_5, a_6, \{ a_7, a_8, a_9, a_{10} \} \}
  -
  \{ \{ a_7, a_8, a_9, a_{10} \}, a_6, a_5, \{ a_1, a_2, a_3, a_4 \} \}
  \equiv 0.
  \end{align*}
Symmetries of the association types generate a submodule $\mathrm{Symm}(n) \subset \mathrm{Quad}(n)$.

\smallskip
\noindent
$\bullet$ Type 2: Liftings.
We assume by induction that we have already determined a set of $S_{n-3}$-module generators for the multilinear identities
of degree $n{-}3$ satisfied by the tetrad.
(We do not include the symmetry in degree 4 in this process, so $n-3 > 4$.)
Let $I( a_1, \dots, a_{n-3} ) \equiv 0$ be such an identity.
We consider $n{-}3$ substitutions of a tetrad for an argument of $I$, and two embeddings of $I$ into a tetrad:
  \begin{align*}
  &
  I( \{ a_1, a_{n-2}, a_{n-1}, a_n \}, a_2, \dots, a_{n-3} ),
  \quad \dots, \quad
  I( a_1, \dots, \{ a_i, a_{n-2}, a_{n-1}, a_n \}, \dots, a_{n-3} ),
  \quad \dots,
  \\
  &
  I( a_1, a_2, \dots, \{ a_{n-3}, a_{n-2}, a_{n-1}, a_n \} ),
  \\
  &
  \{ I( a_1, a_2, \dots, a_{n-3} ), a_{n-2}, a_{n-1}, a_n \},
  \quad
  \{ a_{n-2}, I( a_1, a_2, \dots, a_{n-3} ), a_{n-1}, a_n \}.
  \end{align*}
These multilinear polynomials clearly vanish, and form a set of $S_n$-module generators for the
multilinear identities in degree $n$ which are consequences of $I$.
We repeat this process for every generator in degree $n{-}3$.
Iteration of this process produces a set of generators for the $S_n$-module of identities for the tetrad in degree $n$
which are consequences of the known identities of lower degree.
The liftings of the identities of lower degree generate a submodule $\mathrm{Lift}(n) \subset \mathrm{Quad}(n)$.

\begin{definition}
The submodule $\mathrm{Old}(n) = \mathrm{Symm}(n) + \mathrm{Lift}(n) \subset \mathrm{Quad}(n)$
consists of identities in degree $n$ which are consequences of known identities of lower degree.
\end{definition}

We use the methods of \S\ref{subsectionrepmat} to find canonical generators for $\mathrm{Old}(n)$.
For each $\lambda$, we obtain the matrix $\mathrm{RCF}_\lambda(\mathrm{Old}(n))$
whose rows are a set of independent generators for the simple summands $[\lambda]$
in the isotypic component of type $\lambda$ in $\mathrm{Old}(n)$.

\subsubsection*{Step 2}

We determine all the multilinear identities satisfied by the tetrad in degree $n$.
By \S\ref{subsectionrepmat}, these identities are elements of $\mathrm{Quad}(n) = ( \mathbb{F} S_n )^t$,
where $t = t(n)$ is the number of association types in degree $n$.
Each monomial in degree $n$ can be expanded by applying the definition $\{a,b,c,d\} = abcd + dcba$ to each occurrence of
the operation symbol; the result is a multilinear associative polynomial of degree $n$:
an element of $\mathbb{F} S_n$.
This process extends linearly to $\mathrm{Quad}(n)$, giving the expansion map
$E_n\colon \mathrm{Quad}(n) \to \mathbb{F} S_n$, which is an $S_n$-module morphism.
The kernel of $E_n$ is the submodule of $\mathrm{Quad}(n)$ consisting of those multilinear polynomials
which vanish after expansion into the free associative algebra.

\begin{definition}
The submodule $\mathrm{All}(n) \subset \mathrm{Quad}(n)$ is the kernel of $E_n\colon \mathrm{Quad}(n) \to \mathbb{F} S_n$:
all the multilinear identities in degree $n$ satisfied by the tetrad.
\end{definition}

We use the methods of \S\ref{subsectionrepmat} to find canonical generators for $\mathrm{All}(n)$.
For each $\lambda$, we calculate the matrix $\mathrm{RCF}_\lambda(\mathrm{All}(n))$
whose rows are a set of independent generators for distinct simple summands of type $[\lambda]$
in $\mathrm{All}(n)$.
We restrict $E_n$ to the corresponding isotypic component of its domain and codomain,
obtaining the map $E_n^\lambda\colon M_{d_\lambda}(\mathbb{F})^t \to M_{d_\lambda}(\mathbb{F})$.
The matrix representing $E_n^\lambda$ has size $d_\lambda \times t d_\lambda$, and the $i$-th block consists of
the representation matrix of the expansion of the $i$-th identity monomial.
We compute a canonical basis for the nullspace of $E_n^\lambda$ as follows:
\begin{itemize}
\item
we calculate $\mathrm{RCF}(E_n^\lambda)$, its rank $q$, and its nullity $t d_\lambda - q$;
\item
we set the free variables (columns without leading 1s) to the standard basis vectors in $\mathbb{F}^{t d_\lambda - q}$
and solve for the leading variables;
\item
we put the resulting basis vectors into a matrix of size $( t d_\lambda - q ) \times t d_\lambda$;
\item
we compute the RCF of this matrix, which is $\mathrm{RCF}_\lambda(\mathrm{All}(n))$.
\end{itemize}

\subsubsection*{Step 3}

We compare the results of Steps 1 and 2 to determine whether there exist new multilinear identities
satisfied by the tetrad in degree $n$.

\begin{definition}
The quotient module $\mathrm{New}(n) = \mathrm{All}(n) / \mathrm{Old}(n)$ consists of
the new identities for the tetrad in degree $n$:
a complete set of representatives for the equivalence classes of all identities modulo old identities.
\end{definition}

To obtain a canonical set of generators for $\mathrm{New}(n)$, for each $\lambda$
we compare the matrices $\mathrm{RCF}_\lambda(\mathrm{Old}(n))$
and $\mathrm{RCF}_\lambda(\mathrm{All}(n))$.
If these two matrices have the same rank, they must be equal;
this indicates that every identity in the isotypic component $\lambda$ follows from identities of lower degree.
If the ranks are not equal, then $\texttt{oldrank}(\lambda)$, the rank of $\mathrm{RCF}_\lambda(\mathrm{Old}(n))$,
must be strictly less than $\texttt{allrank}(\lambda)$, the rank of $\mathrm{RCF}_\lambda(\mathrm{All}(n))$;
moreover,
the row space of $\mathrm{RCF}_\lambda(\mathrm{Old}(n))$ must be a subspace of the row space of $\mathrm{RCF}_\lambda(\mathrm{All}(n))$.
The difference $\texttt{allrank}(\lambda) - \texttt{oldrank}(\lambda)$ is the multiplicity of $[\lambda]$ in
$\mathrm{New}(n)$.
To find $S_n$-module generators for the isotypic component $\lambda$ in $\mathrm{New}(n)$,
we identify the positions $(i_1,j_1),\dots,(i_\texttt{oldrank},j_\texttt{oldrank})$ of the leading 1s in
$\mathrm{RCF}_\lambda(\mathrm{Old}(n))$,
and the analogous positions $(i'_1,j'_1),\dots,(i'_\texttt{allrank},j'_\texttt{allrank})$
in $\mathrm{RCF}_\lambda(\mathrm{All}(n))$.
Comparing these two sets of positions, we obtain
  \[
  \mathcal{J} = \big\{ j_1, \dots, j_\texttt{oldrank} \big\} \subsetneq \big\{ j'_1, \dots, j'_\texttt{allrank} \big\} = \mathcal{J}',
  \quad
  \mathcal{J}' \setminus \mathcal{J} = \big\{ j'_{k_1}, \dots, j'_{k_{\texttt{allrank} - \texttt{oldrank}}} \big\}.
  \]
Rows $i'_{k_1}, \dots, i'_{k_{\texttt{allrank} - \texttt{oldrank}}}$ of $\mathrm{RCF}_\lambda(\mathrm{All}(n))$
are the canonical generators of $\mathrm{New}(n)$.

\subsection{Linear algebra over $\mathbb{Q}$, $\mathbb{F}_p$ and $\mathbb{Z}$}

In general, we prefer to do all computations over the field $\mathbb{Q}$ of rational numbers.
However, it is well known that, even if the original matrix is very sparse and its entries belong to $\{0,\pm 1\}$,
standard algorithms for computing the RCF produce exponential increases in the matrix entries.
Even if enough computer memory is available to store the intermediate results, the calculations can take far too
much time.
It is therefore often convenient to use modular arithmetic, so that each matrix entry uses a fixed small
amount of memory.
This leads to the problem of rational reconstruction: recovering the correct results over $\mathbb{Q}$ or $\mathbb{Z}$
from the known results over $\mathbb{F}_p$.

In general, rational reconstruction is extremely ill-defined: we want to compute an inverse for a
partially-defined infinity-to-one map.
Therefore it is only effective when we have a good theoretical understanding of the arithmetical nature of the
expected results.
In our computations, we may assume that the correct rational coefficients have a common highly composite denominator.
The reason is that the vector spaces we study are all modules over the symmetric group $S_n$, and in the Wedderburn
decomposition of the group algebra $\mathbb{Q} S_n$ into a direct sum of full matrix algebras, the matrix units are
linear combinations of permutations in which the coefficients have $n!$ as their common denominator.
Moreover, $\mathbb{F} S_n$ is semisimple whenever $\mathbb{F}$ has characteristic 0 or $p > n$, so formulas giving
the Wedderburn decomposition in rational arithmetic also apply to modular arithmetic with a prime larger than the
degree of the multilinear polynomial identities.
For a more precise statement of this fact, see \cite[Lemma 8]{BP2009c}.

If we use a large enough prime, say $p > n!$ rather than $p > n$, then we can recognize the common denominator
$d$ of the rational coefficients from the distribution of the congruence classes
modulo $p$: the modular coefficients are clustered near the congruence classes representing $i/d$ for $1 \le i \le d-1$.
This allows us to recover the rational coefficients; we then multiply by the LCM of the denominators to get
integer coefficients, and finally divide by the GCD of the coefficients to get the integer vector with least
Euclidean length which is a scalar multiple of the original coefficient vector.
Once we have a realistic conjecture for the correct integer coefficients, we can perform a much simpler computation
using rational arithmetic to verify the results.

Most of our computations involve finding a basis of integer vectors for the nullspace of a matrix with integer entries.
In some cases, the modular methods described in the previous paragraphs give good results, meaning that
the basis vectors have small Euclidean lengths.
In other cases, we obtain much better results using the Hermite normal form (HNF) of an integer matrix
(the analogue over $\mathbb{Z}$ of the RCF) together with the LLL algorithm for lattice basis reduction.
If $A$ is an $s \times t$ matrix over $\mathbb{Z}$ then computing the HNF of the transpose produces two
matrices over $\mathbb{Z}$: a $t \times s$ matrix $H$ and a $t \times t$ matrix $U$ with $\det(U) = \pm 1$
such that $U A^t = H$.
If $\mathrm{rank}(A) = r$ then the bottom $t-r$ rows of $U$ form a lattice basis for the left integer nullspace of $A^t$,
which is the right integer nullspace of $A$.
We then apply the LLL algorithm with increasing values of the parameter to this basis in order to obtain
shorter basis vectors.
We define a precise measure of the size of a lattice basis consisting of integer vectors $v_1, \dots, v_k$ by
the number of decimal digits in the product of the Euclidean lengths of the vectors:
  \begin{equation}
  \label{basissize}
  \sum_{i=1}^k \log_{10} \| v_i \|.
  \end{equation}
For a more detailed discussion of the application of HNF and LLL to polynomial identities, including algorithms
in pseudocode, see \cite[\S3]{BP2009} and the monograph \cite{BremnerLLL}.


\section{Defining identities for Jordan quadruple systems} \label{sectiondegree7}

In this section we determine a complete set of $S_n$-module generators for the multilinear polynomial
identities satisfied by the tetrad in degrees 4 and 7.

\begin{lemma} \label{tetrad4}
Every multilinear identity in degree 4 satisfied by the tetrad in every associative algebra is a consequence of
the symmetry $\{a,b,c,d\} - \{d,c,b,a\} \equiv 0$.
\end{lemma}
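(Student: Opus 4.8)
The plan is to reduce everything to a finite-dimensional linear algebra computation in the space $\mathrm{Quad}(4)$, exactly as set up in \S\ref{sectioncomputational}. In degree 4 there is only $t = 1$ association type, namely the bare tetrad $\{a_1,a_2,a_3,a_4\}$, so $\mathrm{Quad}(4)$ is the quotient of the regular module $\field S_4$ by the left ideal generated by the single symmetry relation $a_1a_2a_3a_4 - a_4a_3a_2a_1$. Since $S_4$ has order $24$ and the orbit of the identity permutation under the reversal involution $(1\,4)(2\,3)$ has size $2$, this symmetry relation together with its $S_4$-translates cuts the dimension from $24$ down to $12$; concretely, $\mathrm{Quad}(4)$ has a basis indexed by the $12$ cosets of the reversal pairs, i.e.\ one monomial from each pair $\{a_{\sigma(1)}a_{\sigma(2)}a_{\sigma(3)}a_{\sigma(4)},\, a_{\sigma(4)}a_{\sigma(3)}a_{\sigma(2)}a_{\sigma(1)}\}$.

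Next I would analyze the expansion map $E_4\colon \mathrm{Quad}(4) \to \field S_4$ defined by $\{a,b,c,d\} \mapsto abcd + dcba$. On the $12$-element basis above, $E_4$ sends the class of $a_{\sigma(1)}a_{\sigma(2)}a_{\sigma(3)}a_{\sigma(4)}$ to the sum of that associative monomial with its reversal. These $12$ images are manifestly linearly independent in $\field S_4$: each is a sum of two distinct permutations, and the $12$ reversal-pairs partition all $24$ permutations, so the supports are pairwise disjoint. Hence $E_4$ is injective, which means $\mathrm{All}(4) = \ker E_4 = 0$. By the definitions in the excerpt, $\mathrm{Old}(4)$ in this setting is generated precisely by the symmetry relation (there are no lower-degree tetrad identities to lift, since degree must be $\equiv 1 \pmod 3$ and the next lower case is degree $1$, which is trivial), and we have already quotiented $\mathrm{Quad}(4)$ by exactly that symmetry. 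Therefore $\mathrm{New}(4) = \mathrm{All}(4)/\mathrm{Old}(4) = 0$, which is the assertion of the lemma: every degree-$4$ multilinear identity satisfied by the tetrad is a consequence of the symmetry $\{a,b,c,d\} - \{d,c,b,a\} \equiv 0$.

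I expect the only real subtlety — and the step worth spelling out carefully — is the injectivity of $E_4$, or equivalently the dimension count showing $\dim \mathrm{Quad}(4) = 12$ and $\dim(\operatorname{im} E_4) = 12$. One must be careful that the reversal involution has \emph{no} fixed points among the $24$ permutations (it does not: $(1\,4)(2\,3)$ acting on positions is fixed-point-free on $S_4$ under the relevant action), so that the pairing is genuine and the supports of the $12$ images are truly disjoint rather than overlapping on palindromic monomials. Alternatively, one can run the representation-theoretic machinery of \S\ref{subsectionrepmat} partition by partition: for each $\lambda \vdash 4$ compute the rank of the $d_\lambda \times d_\lambda$ matrix representing $E_4^\lambda$ and check it equals $d_\lambda$ minus the multiplicity of $[\lambda]$ in $\mathrm{Symm}(4)$, leaving nothing new; but the direct support argument is cleaner and avoids any computation. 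Either way, the bulk of the work is the bookkeeping of the reversal pairing, and no genuinely hard obstacle arises in degree $4$ — the content of the paper begins in degree $7$.
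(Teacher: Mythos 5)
Your argument is correct, and it proves the lemma by hand where the paper verifies it by machine. The paper works in the full $24$-dimensional multilinear space of quaternary monomials (no symmetry imposed), builds the $24\times 24$ expansion matrix, computes its row canonical form to find rank $12$ and nullity $12$, extracts a nullspace basis, and then observes that every basis vector is a permutation of $\{a,b,c,d\}-\{d,c,b,a\}$. You instead quotient by the symmetry first: since reversal acts as right multiplication by $\rho=(1\,4)(2\,3)$, it is fixed-point-free on $S_4$, so $\dim \mathrm{Quad}(4)=12$, and the expansion of each reduced monomial is $\sigma+\sigma\rho$, supported on the coset $\sigma\langle\rho\rangle$; the $12$ cosets partition $S_4$, so these images have pairwise disjoint supports and are linearly independent, making the induced expansion map injective. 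Combined with the (trivial, but worth stating explicitly) fact that the symmetry and its $S_4$-translates expand to zero and hence lie in the kernel of the expansion from the free $24$-dimensional space, this shows that kernel is exactly the $12$-dimensional submodule generated by the symmetry, which is the lemma. Both proofs identify the same kernel of the same map; yours buys a computation-free, conceptual argument whose only delicate point (the absence of palindromic multilinear monomials) you address correctly, while the paper's brute-force matrix computation has the advantage of fitting the uniform computational framework it must use anyway in degrees $7$ and $10$, where no disjoint-support shortcut exists.
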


\begin{proof}
Consider two copies of the group algebra $\mathbb{Q} S_4$;
the first, denoted $Q$, is the multilinear subspace of degree 4 in the free quaternary algebra
with one operation $\{-,-,-,-\}$;
the second, denoted $A$, is the multilinear subspace of degree 4 in the free associative algebra.
Bases of $Q$ and $A$ are the sets
$\{ \, \{ a^\sigma, b^\sigma, c^\sigma, d^\sigma \} \mid \sigma \in S_4 \}$
and
$\{ \, a^\sigma b^\sigma c^\sigma d^\sigma \mid \sigma \in S_4 \}$,
ordered lexicographically.
We initialize the $24 \times 24$ expansion matrix $E$ in which the $(i,j)$ entry is the coefficient of the $i$-th
associative monomial in the expansion of the $j$-th quaternary monomial, and compute its RCF, which has rank 12,
and hence nullity 12.
We obtain a basis for the nullspace by setting the free variables equal to the standard basis vectors in $\mathbb{Q}^{12}$
and solving for the leading variables.
We put these basis vectors into a $12 \times 24$ matrix and compute its RCF;
every row represents a permutation of the stated symmetry.
\end{proof}

\begin{theorem} \label{tetrad7}
Every multilinear identity in degree 7 satisfied by the tetrad in every associative algebra is a consequence of
the symmetry and these three identities in degree 7:
  \smallskip
  \begin{align*}
  & \{ \{ a{,}b{,}c{,}d \}{,}e{,}f{,}g \}  
  + \{ \{ a{,}b{,}f{,}e \}{,}d{,}c{,}g \}  
  + \{ \{ d{,}c{,}f{,}e \}{,}a{,}b{,}g \}  
  - \{ g{,}\{ b{,}a{,}d{,}c \}{,}f{,}e \}  
  \\
  &
  - \{ g{,}\{ b{,}a{,}e{,}f \}{,}c{,}d \}  
  - \{ g{,}\{ c{,}d{,}e{,}f \}{,}b{,}a \}  
  \equiv 0,
  \\[4pt]
    & \{ \{ a{,}b{,}c{,}d \}{,}e{,}f{,}g \}  
  - \{ \{ a{,}b{,}g{,}f \}{,}e{,}c{,}d \}  
  + \{ \{ a{,}b{,}d{,}c \}{,}e{,}g{,}f \}  
  - \{ \{ a{,}b{,}f{,}g \}{,}e{,}d{,}c \}  
  \\
  &
  + \{ \{ a{,}e{,}c{,}d \}{,}b{,}g{,}f \}  
  - \{ \{ a{,}e{,}g{,}f \}{,}b{,}d{,}c \}  
  + \{ \{ a{,}e{,}d{,}c \}{,}b{,}f{,}g \}  
  - \{ \{ a{,}e{,}f{,}g \}{,}b{,}c{,}d \}  
  \\
  &
  - \{ a{,}\{ b{,}c{,}d{,}e \}{,}f{,}g \}  
  + \{ a{,}\{ b{,}g{,}f{,}e \}{,}c{,}d \}  
  - \{ a{,}\{ b{,}d{,}c{,}e \}{,}g{,}f \}  
  + \{ a{,}\{ b{,}f{,}g{,}e \}{,}d{,}c \}  
  \equiv 0,
  \\[4pt]
  & \{ \{ a{,}b{,}c{,}d \}{,}e{,}f{,}g \}  
  - \{ \{ a{,}f{,}g{,}c \}{,}b{,}e{,}d \}  
  + \{ \{ c{,}b{,}a{,}d \}{,}e{,}g{,}f \}  
  + \{ \{ f{,}b{,}c{,}e \}{,}g{,}a{,}d \}  
  \\
  &
  - \{ \{ f{,}g{,}a{,}e \}{,}c{,}b{,}d \}  
  - \{ \{ f{,}g{,}a{,}d \}{,}b{,}c{,}e \}  
  - \{ \{ f{,}g{,}e{,}d \}{,}a{,}b{,}c \}  
  + \{ \{ g{,}b{,}a{,}e \}{,}f{,}c{,}d \}  
  \\
  &
  - \{ \{ g{,}f{,}c{,}e \}{,}a{,}b{,}d \}  
  - \{ \{ g{,}f{,}c{,}d \}{,}b{,}a{,}e \}  
  - \{ \{ g{,}f{,}e{,}d \}{,}c{,}b{,}a \}  
  + \{ \{ e{,}a{,}b{,}d \}{,}c{,}f{,}g \}  
  \\
  &
  + \{ \{ e{,}c{,}b{,}d \}{,}a{,}g{,}f \}  
  + \{ a{,}\{ b{,}c{,}g{,}f \}{,}e{,}d \}  
  + \{ c{,}\{ b{,}a{,}f{,}g \}{,}e{,}d \}  
  - \{ f{,}\{ b{,}c{,}e{,}g \}{,}a{,}d \}  
  \\
  &
  - \{ g{,}\{ b{,}a{,}e{,}f \}{,}c{,}d \}  
  + \{ e{,}\{ a{,}g{,}f{,}c \}{,}b{,}d \}  
  \equiv 0.
  \end{align*}
\end{theorem}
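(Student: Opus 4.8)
The plan is to carry out the three-step computational procedure described in Section~\ref{sectioncomputational} with $n = 7$, following the same pattern as the proof of Lemma~\ref{tetrad4}. First I would set up the combinatorial data: in degree $7$ there are $t(7) = 4$ association types for a quaternary operation, but using the symmetry $\{a,b,c,d\} \equiv \{d,c,b,a\}$ in degree $4$ (as in the Step~1 reduction exhibited in the outline) these collapse to $t = 2$ inequivalent types, which I will take to be $\{\{a_1,a_2,a_3,a_4\},a_5,a_6,a_7\}$ and $\{a_1,\{a_2,a_3,a_4,a_5\},a_6,a_7\}$ after normalization. Thus $\mathrm{Quad}(7) = (\mathbb{Q} S_7)^2$ has dimension $2 \cdot 5040 = 10080$, and the target $\mathbb{Q} S_7$ has dimension $5040$. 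I would build the expansion map $E_7\colon \mathrm{Quad}(7) \to \mathbb{Q} S_7$ explicitly by applying $\{a,b,c,d\} = abcd + dcba$ to each occurrence of the operation symbol in the identity monomial of each association type, obtaining for each type a sum of associative monomials (here $2^2 = 4$ monomials per type, since each type has two operation symbols).

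Next, rather than work with the full $10080 \times 5040$ integer matrix — whose row-reduction would suffer the entry-explosion problem discussed in Section~\ref{sectioncomputational} — I would use the isotypic decomposition over $S_7$. For each of the $15$ partitions $\lambda \vdash 7$, I compute $\mathrm{RCF}_\lambda(\mathrm{Old}(7))$ from the liftings of the symmetry (the only ``lower-degree'' input, since there are no nontrivial identities in degrees $5$ or $6$), using the generators of $\mathrm{Symm}(7)$ together with the Type-2 liftings, and separately compute $\mathrm{RCF}_\lambda(\mathrm{All}(7))$ as the canonical nullspace basis of the block matrix $E_7^\lambda\colon M_{d_\lambda}(\mathbb{Q})^2 \to M_{d_\lambda}(\mathbb{Q})$, exactly by the four-item recipe in Step~2. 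Comparing ranks partition by partition via Step~3 yields the multiplicities of $\mathrm{New}(7)$; the assertion of the theorem is that these multiplicities total $3$ (realized in the partitions where $\texttt{allrank}(\lambda) > \texttt{oldrank}(\lambda)$), and that lifting the $\mathrm{Symm}(4)$ symmetry together with the three displayed identities regenerates all of $\mathrm{All}(7)$. Concretely, I would verify that the $S_7$-submodule generated by the symmetry and the three identities has, in every partition $\lambda$, the same $\mathrm{RCF}_\lambda$ as $\mathrm{All}(7)$; equality of ranks then forces equality of row spaces, which is the content of the theorem.

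To produce the three identities in the clean integer form displayed, I would take the canonical generators extracted from the rows of $\mathrm{RCF}_\lambda(\mathrm{All}(7))$ indicated by Step~3, then lift each back to an element of $\mathrm{Quad}(7) = (\mathbb{Q} S_7)^2$, clear denominators (common denominator dividing $7!$, per the discussion of rational reconstruction), and apply HNF together with the LLL algorithm to the corresponding integer lattice in order to find short representatives — optimizing the basis-size measure \eqref{basissize} — and finally re-expand the normalized short vectors back into quaternary monomials to get human-readable identities. The two main obstacles I anticipate are, first, the modular computation of $\mathrm{RCF}_\lambda(\mathrm{Old}(7))$ and $\mathrm{RCF}_\lambda(\mathrm{All}(7))$ over $\mathbb{F}_p$ with a prime $p > 7$ (or $p > 7!$ for reconstruction) and the bookkeeping of which partitions contribute new identities; and second — the genuinely delicate part — certifying that the three chosen identities really do generate \emph{all} of $\mathrm{New}(7)$ and not merely a proper submodule, which requires checking the rank equality in every single partition, since a deficiency in even one $\lambda$ would mean additional generators are needed. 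Once a candidate answer is in hand, the final verification can be redone in exact rational arithmetic on the much smaller per-partition matrices, as noted in the Conventions, so correctness does not ultimately depend on the modular heuristics.
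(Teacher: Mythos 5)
Your plan is workable but it is a genuinely different route from the paper's. The paper's proof of this theorem does \emph{not} split the computation into isotypic components: it builds the single $5040\times 5040$ expansion matrix over $\mathbb{F}_{101}$ (the two association types contribute $7!/2=2520$ monomials each, since the inner symmetry is quotiented out rather than carried as relations), computes the nullspace of dimension $2520$, sorts the nullspace vectors by Euclidean length, and then greedily extracts $S_7$-module generators by testing, for each candidate, whether adjoining all its permutations increases the rank of the accumulated row space; four candidates increase the rank and one is redundant, leaving the three stated identities. The per-partition machinery of \S\ref{sectioncomputational} that you propose is what the paper uses only as a cross-check (Lemma \ref{tetrad7lemma} and Table \ref{tetrad7table}) and, out of necessity, in degree 10. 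Your route buys smaller matrices and exact per-$\lambda$ bookkeeping; the paper's route buys a direct mechanism for producing a \emph{small} set of explicit integer-coefficient generators, which is the part your write-up leaves vague.

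Two concrete problems. First, your claim that ``the multiplicities total $3$'' is wrong: the column ``new'' of Table \ref{tetrad7table} sums to far more than $3$ (e.g.\ it is $2$ for $\lambda=1^7$ alone); the number $3$ is the number of $S_7$-module \emph{generators}, not the total multiplicity of $\mathrm{New}(7)$, and these are unrelated in the way you suggest. Second, and relatedly, extracting the Step-3 canonical generators partition by partition yields on the order of a hundred generators (one per new row of each $\mathrm{RCF}_\lambda$), and applying HNF/LLL only shortens vectors, it does not reduce their number; some analogue of the paper's greedy membership test over the full module is needed to arrive at three identities, and your proposal does not supply it. That said, for proving the theorem as stated the identities are given, and your verification step is essentially sound: check that each of the three identities expands to zero (so that the module they generate together with the symmetry is contained in $\mathrm{All}(7)$ --- without this containment, equality of ranks does not force equality of row spaces), and then confirm partition by partition that $\mathrm{RCF}_\lambda$ of that module agrees with $\mathrm{RCF}_\lambda(\mathrm{All}(7))$. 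With those two repairs your argument proves the theorem, by a decomposition the paper itself reserves for the degree-10 case.
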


\begin{proof}
By symmetry, we need to consider only two association types in degree 7,
  \[
  \{ \{ -,-,-,- \},-,-,- \}, \qquad \{ -,\{ -,-,-,- \},-,- \},
  \]
each having $7!/2 = 2520$ distinct multilinear monomials, for a total of 5040.
A basis of $\mathrm{Quad}(7)$ consists of these monomials ordered by association type and then by lex order of the
permutation.
A basis of the group algebra $\mathbb{Q} S_7$, the multilinear subspace in the free associative algebra,
consists of the permutations in lex order.
We record the expansion of the identity monomial in each association type:
  \begin{align*}
  \{ \{ a,b,c,d \},e,f,g \}
  &=
  abcdefg + dcbaefg + gfeabcd + gfedcba,
  \\
  \{ a,\{ b,c,d,e \},f,g \}
  &=
  abcdefg + aedcbfg + gfbcdea + gfedcba.
  \end{align*}
The other expansions are obtained by permutation of the arguments,
since the expansion map $E \colon \mathrm{Quad}(7) \to \mathbb{Q} S_7$ is an $S_7$-module homomorphism.

Using modular arithmetic ($p = 101$) we initialize a $5040 \times 5040$ matrix in which the $(i,j)$ entry is the
coefficient of the $i$-th associative monomial in the expansion of the $j$-th quaternary monomial.
We compute the RCF and find that the rank is 2520, and hence the nullity is 2520.
As in the proof of Lemma \ref{tetrad4}, we find a basis of the nullspace;
every coefficient belongs to $\{ 0, 1, 2, 50, 51, 99, 100 \}$.
For each vector, we multiply by 2 if 50 or 51 is a coefficient, and do nothing otherwise.
Since the rows are coefficient vectors of polynomial identities, we can multiply by nonzero scalars to
obtain equivalent identities with simpler coefficients.
Reducing modulo $p$ using symmetric representatives, the coefficients belong to $\{ -4, -2, -1, 0, 1, 2, 4 \}$.
We regard these as integers, and sort the vectors by increasing Euclidean length.

The next step is to extract from the linear basis of 2520 vectors a much smaller set of generators for the nullspace
as an $S_7$-module.
Using modular arithmetic, we initialize a zero matrix consisting of upper and lower $5040 \times 5040$ blocks.
For each vector, regarded as the coefficients of a polynomial identity, we:
  \begin{itemize}
  \item
  check to see if the identity belongs to the row space of the matrix;
  \item
  if not, apply all permutations of the arguments to the identity,
  and store the results in the rows of the lower block;
  \item
  compute the RCF of the matrix; if the rank increases, record the identity as a generator
  (at this point, the lower block is again zero).
  \end{itemize}
Only four identities increase the rank, and one belongs to the submodule generated by the others,
which are independent (none is a consequence of the other two).
These are the three identities in the statement of this theorem.
\end{proof}

\begin{remark}
The identities of Theorem \ref{tetrad7} can be checked by hand.
For example, the expansion of the first identity into the free associative algebra produces
  \begin{align*}
  &
  abcdefg
  + gfeabcd
  + dcbaefg
  + gfedcba
  + abfedcg
  + gcdabfe
  \\
  &
  {}
  + efbadcg
  + gcdefba
  + dcfeabg
  + gbadcfe
  + efcdabg
  + gbaefcd
  \\
  &
  {}
  - gbadcfe
  - efbadcg
  - gcdabfe
  - efcdabg
  - gbaefcd
  - dcbaefg
  \\
  &
  {}
  - gfeabcd
  - dcfeabg
  - gcdefba
  - abcdefg
  - gfedcba
  - abfedcg
  = 0.
  \end{align*}
We need computer algebra to prove that these identities are a complete set of $S_7$-module generators
for the tetrad identities in degree 7.
\end{remark}

\begin{definition} \label{defjqs}
A \textbf{Jordan quadruple system} (or \textbf{JQS}) is a vector space $Q$ over a field $\mathbb{F}$
of characteristic 0 or $p > 7$
with a quadrilinear map $\{-,-,-,-\}\colon Q^4 \to Q$ satisfying the symmetry of
Lemma \ref{tetrad4} and the identities of Theorem \ref{tetrad7}.
\end{definition}

\begin{table}[h]
\begin{tabular}{lr|rrr|rrrr|r}
&\; & \multicolumn{3}{|c|}{symmetries} & \multicolumn{4}{|c|}{expansions}
\\
\midrule
$\lambda$ &\; $d_\lambda$ &\; rows &\; cols &\; symm &\; rows &\; cols &\; rank &\; null &\; new
\\
\midrule
  $7$      &   1 &   3 &   2 &   0 &   2 &   1 &   1 &   1 &   1 \\
  $61$     &   6 &  18 &  12 &   4 &  12 &   6 &   3 &   9 &   5 \\
  $52$     &  14 &  42 &  28 &  12 &  28 &  14 &   8 &  20 &   8 \\
  $51^2$   &  15 &  45 &  30 &  16 &  30 &  15 &   6 &  24 &   8 \\
  $43$     &  14 &  42 &  28 &  12 &  28 &  14 &   7 &  21 &   9 \\
  $421$    &  35 & 105 &  70 &  36 &  70 &  35 &  18 &  52 &  16 \\
  $41^3$   &  20 &  60 &  40 &  24 &  40 &  20 &  10 &  30 &   6 \\
  $3^21$   &  21 &  63 &  42 &  20 &  42 &  21 &   9 &  33 &  13 \\
  $32^2$   &  21 &  63 &  42 &  20 &  42 &  21 &  12 &  30 &  10 \\
  $321^2$  &  35 & 105 &  70 &  36 &  70 &  35 &  17 &  53 &  17 \\
  $31^4$   &  15 &  45 &  30 &  16 &  30 &  15 &   9 &  21 &   5 \\
  $2^31$   &  14 &  42 &  28 &  12 &  28 &  14 &   7 &  21 &   9 \\
  $2^21^3$ &  14 &  42 &  28 &  12 &  28 &  14 &   6 &  22 &  10 \\
  $21^5$   &   6 &  18 &  12 &   4 &  12 &   6 &   3 &   9 &   5 \\
  $1^7$    &   1 &   3 &   2 &   0 &   2 &   1 &   0 &   2 &   2 \\
\midrule
\end{tabular}
\caption{$S_7$-module multiplicities for the tetrad in degree 7}
\label{tetrad7table}
\end{table}

\begin{lemma} \label{tetrad7lemma}
Over a field of characteristic 0 or $p > 7$,
column ``new'' of Table \ref{tetrad7table} gives the multiplicity
of the $S_7$-module $[\lambda]$ in the kernel of the expansion map for each partition $\lambda$.
\end{lemma}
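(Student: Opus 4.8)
The plan is to verify the entries of Table \ref{tetrad7table} directly by the representation-theoretic machinery of \S\ref{subsectionrepmat}, carried out partition by partition. Concretely, for each of the fifteen partitions $\lambda \vdash 7$ I would compute two row-canonical forms: one for the submodule $\mathrm{Symm}(7)$ of symmetry consequences, and one for the kernel $\mathrm{All}(7)$ of the expansion map $E_7\colon \mathrm{Quad}(7) \to \mathbb{Q} S_7$. Here it is essential to remember that, because we have already quotiented by the degree-4 symmetry when forming $\mathrm{Quad}(7)$, there are only $t = 2$ association types, so the relevant matrices have column size $2 d_\lambda$, and the expansion matrix $E_7^\lambda$ has size $d_\lambda \times 2 d_\lambda$. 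The rank of $E_7^\lambda$ (column ``rank'') together with the nullity $2 d_\lambda - \mathrm{rank}$ (column ``null'') then gives the multiplicity of $[\lambda]$ in $\mathrm{All}(7)$, while the rank of the symmetry matrix (column ``symm'') gives the multiplicity in $\mathrm{Old}(7) = \mathrm{Symm}(7)$, since in degree 7 there are no liftings from lower degree — the only identity below degree 7 is the degree-4 symmetry, which has already been absorbed. The difference ``null'' minus ``symm'' is therefore the multiplicity of $[\lambda]$ in $\mathrm{New}(7)$, which is exactly the content of the lemma, and which also equals the multiplicity of $[\lambda]$ in the kernel of the expansion map once we quotient out the (trivial, here) old part.

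The key steps, in order, are: (1) construct the representation matrices $R_\lambda(\sigma)$ for $\sigma \in S_7$ using Clifton's algorithm as recalled in \S\ref{subsectionrepmat}; (2) for each $\lambda$, build the $d_\lambda \times 2 d_\lambda$ matrix whose two $d_\lambda \times d_\lambda$ blocks are $R_\lambda$ applied to the two expansions
\[
E_7(\{\{a,b,c,d\},e,f,g\}) = abcdefg + dcbaefg + gfeabcd + gfedcba,
\quad
E_7(\{a,\{b,c,d,e\},f,g\}) = abcdefg + aedcbfg + gfbcdea + gfedcba,
\]
treated as elements of $\mathbb{Q} S_7$, and compute its RCF, rank $q_\lambda$, and nullity $2d_\lambda - q_\lambda$; (3) build the analogous matrix for the generators of $\mathrm{Symm}(7)$ — the relations $\iota - \tau \equiv 0$ for each association type, where $\tau$ is obtained from the identity monomial by one application of the degree-4 symmetry applied at an inner tetrad — project by $R_\lambda$, and compute its rank; (4) check that the resulting numbers match columns ``null'' and ``symm'' of the table, and confirm that the row space of $\mathrm{RCF}_\lambda(\mathrm{Symm}(7))$ is contained in that of $\mathrm{RCF}_\lambda(\mathrm{All}(7))$, so that $\texttt{allrank}(\lambda) - \texttt{oldrank}(\lambda)$ is genuinely the multiplicity of $[\lambda]$ in the quotient. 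Summing ``new'' over all $\lambda$, weighted by $d_\lambda$, should also reconcile with the global count $5040 - 2520 = 2520$ obtained in the proof of Theorem \ref{tetrad7}.

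The main obstacle I expect is not conceptual but a matter of bookkeeping and numerical hygiene: the degree-7 computations already strain memory (the global expansion matrix is $5040 \times 5040$), and one must be careful that working modulo a prime $p > 7$ (the paper uses $p = 101$) faithfully reports the ranks that hold over $\mathbb{Q}$. By the semisimplicity of $\mathbb{F} S_7$ for such $p$ and the fact cited from \cite[Lemma 8]{BP2009c}, the ranks of $E_7^\lambda$ and of the symmetry matrices are the same over $\mathbb{F}_{101}$ as over $\mathbb{Q}$, so this is safe, but it must be stated. A secondary subtlety is the claim, implicit in using ``symm'' $=$ ``oldrank'', that $\mathrm{Lift}(7)$ contributes nothing beyond $\mathrm{Symm}(7)$; this follows because the only proper multilinear identity of degree strictly below 7 for a quadrilinear operation would live in degree 4 (degrees $\equiv 1 \pmod 3$), and by Lemma \ref{tetrad4} every such identity is a consequence of the symmetry already built into $\mathrm{Quad}$. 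Once these two points are recorded, the lemma reduces to reading off and tabulating fifteen pairs of integers, each obtained from a single RCF computation.
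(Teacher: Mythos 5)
Your proposal is correct and is essentially the paper's own proof: the paper verifies Table \ref{tetrad7table} by exactly this partition-by-partition computation (rank of the symmetry matrix giving column ``symm'', rank and nullity of the expansion matrix --- the paper phrases it as the nullity of the transpose of a $2d_\lambda \times d_\lambda$ block matrix, which is the same number as your $2d_\lambda - q_\lambda$ --- and ``new'' $=$ null minus symm), noting as you do that in degree 7 there are no liftings, so the old identities reduce to the symmetries of the two association types. Your added remarks (validity of ranks over $\mathbb{F}_{101}$ by semisimplicity, and the check $\sum_\lambda d_\lambda \cdot \text{new} = 2520$ against the global nullity from Theorem \ref{tetrad7}) are consistent with, though not spelled out in, the paper's argument.
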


\begin{proof}
Table \ref{tetrad7table} was computed using the methods of \S\ref{subsectionrepmat}.
Columns 1 and 2 give the partitions $\lambda$ and the dimensions $d_\lambda$.
In degree 7 we consider only the symmetries of the association types, since there are no liftings
of identities from lower degrees.
There are two association types each with one symmetry:
  \begin{align*}
  &
  \{ \{ a_1, a_2, a_3, a_4 \}, a_5, a_6, a_7 \}
  -
  \{ \{ a_4, a_3, a_2, a_1 \}, a_5, a_6, a_7 \} \equiv 0,
  \\
  &
  \{ a_1, \{ a_2, a_3, a_4, a_5 \}, a_6, a_7 \}
  -
  \{ a_1, \{ a_5, a_4, a_3, a_2 \}, a_6, a_7 \} \equiv 0.
  \qquad
  \end{align*}
Under ``symmetries'', columns 3 and 4 (rows, cols) give the size of the representation matrix for the symmetries,
and column 5 (symm) gives its rank.
Under ``expansions'', columns 6 and 7 give the size of the expansion matrix,
column 8 gives its rank, and column 9 gives the nullity of its \emph{transpose}.
(For an explanation of using the transpose, see \cite[pp.~446-447]{BM}.)
Column ``new'' is the multiplicity of the module $[\lambda]$ (null minus symm) in the kernel of the expansion map.
\end{proof}

\begin{remark} \label{tetradnonlinear}
The data in Table \ref{tetrad7table} show that there is no point in searching for possibly simpler
nonlinear identities whose linearizations are equivalent to the identities of Theorem
\ref{tetrad7}.
If $I \equiv 0$ is a nonlinear identity of degree 7,
then every monomial of $I$ contains $k \ge 2$ occurrences of some variable, say $a$.
In $L(I)$, the linearization of $I$, the variable $a$ is replaced by $k$ distinct variables
$a_1, \dots, a_k$ and $L(I)$ is invariant under all permutations of these variables.
To obtain the consequences of $L(I)$ in partition $1^7$,
we evaluate the alternating sum over all permutations of the variables in $L(I)$, but this is clearly 0,
contradicting the multiplicity 2 for the module $[1^7]$.
\end{remark}


\section{Special identities in degree 10 for the tetrad} \label{sectiondegree10}

In this section we show that there are identities for the tetrad
which do not follow from the defining identities for JQS.
(We have not included these identities in Definition \ref{defjqs}, so that the operad for JQS is
quadratic and admits a Koszul dual.)

\begin{definition} \label{defjqsspecial}
Let $A$ be an associative algebra and let $\mathrm{JQS}(A)$ be the underlying vector space of $A$
with the tetrad as the operation.
If $Q$ is a JQS, we call $Q$ \textbf{special} if there is an embedding $Q \hookrightarrow \mathrm{JQS}(A)$
for some $A$; otherwise we call $Q$ \textbf{exceptional}.
If $I \equiv 0$ is a polynomial identity which is satisfied by the tetrad in every associative algebra
but which does not follow from the defining identities for JQS, then we call $I$ a \textbf{special identity}
for the tetrad.
\end{definition}

\begin{openproblem}
Do there exist exceptional JQS?
\end{openproblem}

\begin{theorem} \label{tetrad10}
For the tetrad, and for each partition $\lambda$ of 10, columns ``symm'', ``symmlift'', ``null'' and ``new''
in the corresponding row of Table \ref{tetrad10table} contain respectively the multiplicity of $[\lambda]$
in the modules $\mathrm{Symm}(10)$, $\mathrm{Old}(10)$, $\mathrm{All}(10)$ and $\mathrm{New}(10)$.
\end{theorem}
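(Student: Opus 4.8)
The plan is to carry out the three-step procedure of \S\ref{sectioncomputational} in degree $10$, which is the next degree above $7$ in which a quadrilinear operation has monomials (these degrees being $\equiv 1 \pmod 3$), so that the only nontrivial identities of lower degree are the three of Theorem~\ref{tetrad7}. First I would enumerate the association types: the outer quaternary node has four subtrees of degrees forming a composition of $10$ into parts $\equiv 1 \pmod 3$, so the multiset of subtree degrees is $\{7,1,1,1\}$ or $\{4,4,1,1\}$; since a degree-$7$ subtree admits four shapes before reduction, this gives $4\cdot 4+\binom{4}{2}=22$ association types in the free quaternary algebra, which the symmetry $\{a,b,c,d\}\equiv\{d,c,b,a\}$ cuts down to a smaller number $t=t(10)$ of association types. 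Fixing a total order on the reduced types identifies $\mathrm{Quad}(10)$ with $(\mathbb{Q}S_{10})^{t}$, on which $S_{10}$ acts by permuting arguments; for each of the $p(10)=42$ partitions $\lambda\vdash 10$ I would pass to the isotypic component of type $[\lambda]$ via the projections $R_\lambda$ of \S\ref{subsectionrepmat}, so that every matrix that arises is assembled from $d_\lambda\times d_\lambda$ blocks with $d_\lambda\le 768$.

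Next I would build $\mathrm{Old}(10)=\mathrm{Symm}(10)+\mathrm{Lift}(10)$. For $\mathrm{Symm}(10)$ I would list, for each reduced type, the residual symmetry relations $\iota-\tau\equiv 0$ obtained from the identity monomial $\iota$ by one application of $\{a,b,c,d\}\equiv\{d,c,b,a\}$ at an operation node (three such relations for the type $\{\{-,-,-,-\},-,-,\{-,-,-,-\}\}$ are displayed in \S\ref{sectioncomputational}). For $\mathrm{Lift}(10)$ I would take the three generators $I^{(1)},I^{(2)},I^{(3)}$ of Theorem~\ref{tetrad7} and form, for each, its nine liftings: the seven substitutions of a tetrad $\{a_i,a_8,a_9,a_{10}\}$ for the argument $a_i$ ($1\le i\le 7$), and the two embeddings $\{I^{(k)},a_8,a_9,a_{10}\}$ and $\{a_8,I^{(k)},a_9,a_{10}\}$. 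Projecting all of these generators into partition $\lambda$, stacking them into a matrix whose $(i,j)$ block is $R_\lambda$ applied to the $j$-th type-component of the $i$-th generator, and computing row canonical forms yields $\mathrm{RCF}_\lambda(\mathrm{Symm}(10))$ and $\mathrm{RCF}_\lambda(\mathrm{Old}(10))$, whose ranks are the ``symm'' and ``symmlift'' entries.

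Then I would compute $\mathrm{All}(10)=\ker E_{10}$. Expanding the identity monomial of each reduced type by the rule $\{a,b,c,d\}=abcd+dcba$ at every node produces an element of $\mathbb{Q}S_{10}$; for each $\lambda$, stacking the $t$ matrices $R_\lambda(E_{10}(m_j))$ into a matrix of size $t d_\lambda\times d_\lambda$ and, exactly as in the proof of Lemma~\ref{tetrad7lemma}, taking the nullity of its transpose gives the ``null'' entry, the multiplicity of $[\lambda]$ in $\mathrm{All}(10)$, while back-substitution recovers $\mathrm{RCF}_\lambda(\mathrm{All}(10))$ as in Step~2. Every symmetry relation and every lifting is a genuine tetrad identity, so $\mathrm{Old}(10)\subseteq\mathrm{All}(10)$; hence for each $\lambda$ the row space of $\mathrm{RCF}_\lambda(\mathrm{Old}(10))$ lies inside that of $\mathrm{RCF}_\lambda(\mathrm{All}(10))$, and $\mathrm{New}(10)=\mathrm{All}(10)/\mathrm{Old}(10)$ has $[\lambda]$-multiplicity equal to $\texttt{null}-\texttt{symmlift}$, which is the ``new'' entry. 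I would also verify this containment of row spaces partition by partition, as the consistency check of Step~3.

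The main obstacle is the sheer scale of the computation: $10!=3\,628\,800$, so neither the $t d_\lambda\times d_\lambda$ expansion matrices nor the taller matrices for $\mathrm{Symm}(10)$ and $\mathrm{Lift}(10)$ (with several dozen generators in all) can be row-reduced over $\mathbb{Q}$ without catastrophic growth of the entries. I would therefore run all of the linear algebra over a finite field $\mathbb{F}_p$: a prime $p>10$ already makes $\mathbb{F}_pS_{10}$ semisimple with the same Wedderburn structure as $\mathbb{Q}S_{10}$, so that all four multiplicity columns are obtained directly, and in practice I would take $p>10!$ so that, when explicit generators of $\mathrm{New}(10)$ are needed later to exhibit the special identities, the integer coefficients can be recovered by rational reconstruction together with the HNF and LLL reduction of \S\ref{sectioncomputational}. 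A final sanity check is that for each $\lambda$ one has $\texttt{symm}\le\texttt{symmlift}\le\texttt{null}\le t d_\lambda$, and that $\sum_\lambda \texttt{symmlift}\cdot d_\lambda$ and $\sum_\lambda \texttt{null}\cdot d_\lambda$ reproduce $\dim\mathrm{Old}(10)$ and $\dim\mathrm{All}(10)$ computed without the isotypic splitting.
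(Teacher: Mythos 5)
Your proposal is correct and follows essentially the same route as the paper: reduce the association types in degree 10 using the degree-4 symmetry, generate $\mathrm{Old}(10)$ from the residual type symmetries together with the nine liftings (seven substitutions plus two embeddings) of each of the three identities of Theorem \ref{tetrad7}, compute the multiplicity of $[\lambda]$ in $\mathrm{All}(10)$ as the nullity of the transposed expansion matrix in each isotypic component, and obtain the ``new'' column as $\texttt{null}-\texttt{symmlift}$, all over a finite field with the $S_{10}$-representation theory keeping the blocks of size $d_\lambda$. The only detail left implicit is the explicit list of the $t(10)=8$ reduced association types (and their residual symmetries), which your reduction of the 22 unreduced types would produce.
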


\begin{table} \small
\[
\begin{array}{r|lr|rrrr|rrrr|r}
&\; &\; & \multicolumn{4}{|c|}{\text{symmetries and liftings}} & \multicolumn{4}{|c|}{\text{expansions}}
\\
\midrule
\# & \;\lambda &\; d_\lambda &\;
\text{rows} &\; \text{cols} &\; \text{symm} &\; \text{symmlift}  &\;
\text{rows} &\; \text{cols} &\; \text{rank} &\; \text{null} &\; \text{new}
\\
\midrule
   1  &  10             &    1&     9  &     8  &     0  &     7  &     8  &     1  &     1  &     7  &  \cdot  \\
   2  &  9 1          &    9&    81  &    72  &    30  &    68  &    72  &     9  &     4  &    68  &  \cdot  \\
   3  &  8 2          &   35&   315  &   280  &   140  &   260  &   280  &    35  &    20  &   260  &  \cdot  \\
   4  &  8 1^2        &   36&   324  &   288  &   166  &   272  &   288  &    36  &    16  &   272  &  \cdot  \\
   5  &  7 3          &   75&   675  &   600  &   342  &   565  &   600  &    75  &    35  &   565  &  \cdot  \\
   6  &  7 2 1      &  160&  1440  &  1280  &   784  &  1200  &  1280  &   160  &    80  &  1200  &  \cdot  \\
   7  &  7 1^3        &   84&   756  &   672  &   442  &   628  &   672  &    84  &    44  &   628  &  \cdot  \\
   8  &  6 4          &   90&   810  &   720  &   416  &   670  &   720  &    90  &    50  &   670  &  \cdot  \\
   9  &  6 3 1      &  315&  2835  &  2520  &  1594  &  2365  &  2520  &   315  &   155  &  2365  &  \cdot  \\
  10  &  6 2^2        &  225&  2025  &  1800  &  1150  &  1680  &  1800  &   225  &   120  &  1680  &  \cdot  \\
  11  &  6 2 1^2    &  350&  3150  &  2800  &  1878  &  2630  &  2800  &   350  &   170  &  2630  &  \cdot  \\
  12  &  6 1^4        &  126&  1134  &  1008  &   704  &   941  &  1008  &   126  &    66  &   942  &      1  \\
  13  &  5^2            &   42&   378  &   336  &   204  &   320  &   336  &    42  &    16  &   320  &  \cdot  \\
  14  &  5 4 1      &  288&  2592  &  2304  &  1456  &  2160  &  2304  &   288  &   144  &  2160  &  \cdot  \\
  15  &  5 3 2      &  450&  4050  &  3600  &  2330  &  3380  &  3600  &   450  &   220  &  3380  &  \cdot  \\
  16  &  5 3 1^2    &  567&  5103  &  4536  &  3008  &  4244  &  4536  &   567  &   291  &  4245  &      1  \\
  17  &  5 2^2 1      &  525&  4725  &  4200  &  2808  &  3939  &  4200  &   525  &   260  &  3940  &      1  \\
  18  &  5 2 1^3    &  448&  4032  &  3584  &  2464  &  3357  &  3584  &   448  &   224  &  3360  &      3  \\
  19  &  5 1^5        &  126&  1134  &  1008  &   708  &   945  &  1008  &   126  &    60  &   948  &      3  \\
  20  &  4^2 2          &  252&  2268  &  2016  &  1282  &  1880  &  2016  &   252  &   136  &  1880  &  \cdot  \\
  21  &  4^2 1^2        &  300&  2700  &  2400  &  1582  &  2260  &  2400  &   300  &   140  &  2260  &  \cdot  \\
  22  &  4 3^2        &  210&  1890  &  1680  &  1092  &  1580  &  1680  &   210  &   100  &  1580  &  \cdot  \\
  23  &  4 3 2 1  &  768&  6912  &  6144  &  4032  &  5760  &  6144  &   768  &   384  &  5760  &  \cdot  \\
  24  &  4 3 1^3    &  525&  4725  &  4200  &  2802  &  3933  &  4200  &   525  &   265  &  3935  &      2  \\
  25  &  4 2^3        &  300&  2700  &  2400  &  1562  &  2239  &  2400  &   300  &   160  &  2240  &      1  \\
  26  &  4 2^2 1^2    &  567&  5103  &  4536  &  3022  &  4260  &  4536  &   567  &   276  &  4260  &  \cdot  \\
  27  &  4 2 1^4    &  350&  3150  &  2800  &  1870  &  2616  &  2800  &   350  &   180  &  2620  &      4  \\
  28  &  4 1^6        &   84&   756  &   672  &   446  &   631  &   672  &    84  &    40  &   632  &      1  \\
  29  &  3^3 1          &  210&  1890  &  1680  &  1080  &  1569  &  1680  &   210  &   110  &  1570  &      1  \\
  30  &  3^2 2^2        &  252&  2268  &  2016  &  1302  &  1900  &  2016  &   252  &   116  &  1900  &  \cdot  \\
  31  &  3^2 2 1^2    &  450&  4050  &  3600  &  2322  &  3368  &  3600  &   450  &   230  &  3370  &      2  \\
  32  &  3^2 1^4        &  225&  2025  &  1800  &  1164  &  1693  &  1800  &   225  &   105  &  1695  &      2  \\
  33  &  3 2^3 1      &  288&  2592  &  2304  &  1456  &  2159  &  2304  &   288  &   144  &  2160  &      1  \\
  34  &  3 2^2 1^3    &  315&  2835  &  2520  &  1588  &  2358  &  2520  &   315  &   160  &  2360  &      2  \\
  35  &  3 2 1^5    &  160&  1440  &  1280  &   784  &  1198  &  1280  &   160  &    80  &  1200  &      2  \\
  36  &  3 1^7        &   36&   324  &   288  &   162  &   268  &   288  &    36  &    20  &   268  &  \cdot  \\
  37  &  2^5            &   42&   378  &   336  &   192  &   309  &   336  &    42  &    26  &   310  &      1  \\
  38  &  2^4 1^2        &   90&   810  &   720  &   428  &   680  &   720  &    90  &    40  &   680  &  \cdot  \\
  39  &  2^3 1^4        &   75&   675  &   600  &   336  &   559  &   600  &    75  &    40  &   560  &      1  \\
  40  &  2^2 1^6        &   35&   315  &   280  &   146  &   265  &   280  &    35  &    15  &   265  &  \cdot  \\
  41  &  2 1^8        &    9&    81  &    72  &    28  &    67  &    72  &     9  &     5  &    67  &  \cdot  \\
  42  &  1^{10}         &    1&     9  &     8  &     2  &     8  &     8  &     1  &     0  &     8  &  \cdot  \\
\midrule
\end{array}
\]
\caption{$S_{10}$-module multiplicities for the tetrad in degree 10}
\label{tetrad10table}
\end{table}

\begin{proof}
The symmetry in degree 4 implies that every association type for a quadrilinear operation in degree 10
is equivalent to one of the following:
  \begin{equation}
  \label{types10}
  \left\{ \quad
  \begin{array}{cc}
  \{ \{ \{ -, -, -, - \}, -, -, - \}, -, -, - \}, &\qquad
  \{ \{ -, \{ -, -, -, - \}, -, - \}, -, -, - \},
  \\[3pt]
  \{ -, \{ \{ -, -, -, - \}, -, -, - \}, -, - \}, &\qquad
  \{ -, \{ -, \{ -, -, -, - \}, -, - \}, -, - \},
  \\[3pt]
  \{ \{ -, -, -, - \}, \{ -, -, -, - \}, -, - \}, &\qquad
  \{ \{ -, -, -, - \}, -, \{ -, -, -, - \}, - \},
  \\[3pt]
  \{ \{ -, -, -, - \}, -, -, \{ -, -, -, - \} \}, &\qquad
  \{ -, \{ -, -, -, - \}, \{ -, -, -, - \}, - \}.
  \end{array}
  \right.
  \end{equation}
These 8 types have respectively 1, 1, 1, 1, 2, 2, 3, 3 symmetries $\iota - \tau \equiv 0$ where
$\iota$ is the identity monomial and $\tau$ is a monomial with a permutation of order 2 of the variables.
We list the corresponding monomials $\tau$ for each type:
  \begin{alignat*}{4}
  \{ \{ \{ d c b a \} e f g \} h i j \}, \quad
  &
  \{ \{ a \{ e d c b \} f g \} h i j \}, \quad
  &
  \{ a \{ \{ e d c b \} f g h \} i j \}, \quad
  &
  \{ a \{ b \{ f e d c \} g h \} i j \},
  \\
  \{ \{ d c b a \} \{ e f g h \} i j \}, \quad
  &
  \{ \{ a b c d \} \{ h g f e \} i j \}, \quad
  &
  \{ \{ d c b a \} e \{ f g h i \} j \}, \quad
  &
  \{ \{ a b c d \} e \{ i h g f \} j \},
  \\
  \{ \{ d c b a \} e f \{ g h i j \} \}, \quad
  &
  \{ \{ a b c d \} e f \{ j i h g \} \}, \quad
  &
  \{ \{ g h i j \} f e \{ a b c d \} \}, \quad
  &
  \{ a \{ e d c b \} \{ f g h i \} j \},
  \\
  \{ a \{ b c d e \} \{ i h g f \} j \}, \quad
  &
  \{ j \{ f g h i \} \{ b c d e \} a \}.
  \end{alignat*}
For each association type, each symmetry reduces the number of multilinear monomials by a factor of 2,
so the total number of monomials is
  \[
  10! \left( \tfrac12 + \tfrac12 + \tfrac12 + \tfrac12 + \tfrac14 + \tfrac14 + \tfrac18 + \tfrac18 \right)
  =
  9979200.
  \]
This number is so large that we must decompose the computation into smaller pieces using the representation
theory of $S_{10}$.
(To do the following computation without representation theory would require a $3628800 \times 9979200$ matrix.)

The symmetry implies that each identity $I(a,b,c,d,e,f,g)$ from Theorem \ref{tetrad7} produces 9 liftings
in degree 10:
  \begin{align*}
  &
  I(\{a{,}h{,}i{,}j\}{,}b{,}c{,}d{,}e{,}f{,}g), \quad
  I(a{,}\{b{,}h{,}i{,}j\}{,}c{,}d{,}e{,}f{,}g), \quad
  \dots, \quad
  I(a{,}b{,}c{,}d{,}e{,}f{,}\{g{,}h{,}i{,}j\}),
  \\
  &
  \{I(a{,}b{,}c{,}d{,}e{,}f{,}g){,}h{,}i{,}j\}, \quad
  \{h,I(a{,}b{,}c{,}d{,}e{,}f{,}g){,}i{,}j\}.
  \end{align*}
For each partition $\lambda$ with module $[\lambda]$ of dimension $d_\lambda$,
we construct a matrix $M$ of size $9 d_\lambda \times 8 d_\lambda$
(``rows'' and ``cols'' under ``symmetries and liftings'')
consisting of $d_\lambda \times d_\lambda$ blocks.
To process an identity, we compute the representation matrices of its terms in each association type \cite{BP2011},
store these matrices in the last row of blocks, and compute the RCF of $M$,
so the last row of blocks becomes zero.
In this way, we process the symmetries of the association types and compute the rank of $M$ (column ``symm'').
We retain these results in $M$ and process the liftings; the rank of $M$ increases (column ``symmlift'')
to reach the multiplicity of $[\lambda]$ in the module $\mathrm{Old}(10)$.
When we are done, the rows of $\mathrm{RCF}(M)$ form a canonical set of generators for
the isotypic component of $[\lambda]$ in $\mathrm{Old}(10)$.

For each partition $\lambda$, we construct a matrix $X$ of size $8 d_\lambda \times d_\lambda$ (``rows'' and ``cols''
under ``expansions'') consisting of $d_\lambda \times d_\lambda$ blocks;
the $i$-th block contains the representation matrix for the terms of the expansion of the identity monomial in
the $i$-th association type.
We compute the RCF of the transpose $X^t$ and find its rank (column ``rank'') and nullity (column ``null);
we then extract a basis for the nullspace by setting the free variables to the standard basis vectors
and solving for the leading variables.
We put the nullspace basis into the rows of a matrix $N$ and compute the RCF.
The rows of $\mathrm{RCF}(N)$ form a canonical set of generators for the isotypic component of $[\lambda]$
in $\mathrm{All}(10)$.

If the multiplicity of $[\lambda]$ in $\mathrm{Old}(10)$ coincides with its multiplicity in $\mathrm{All}(10)$,
then there are no new identities for partition $\lambda$.
(In this case, we check the consistency of the computations by verifying that the two submodules are equal,
which amounts to verifying that the two matrices in RCF are equal.)
Otherwise, there are new identities in degree 10 for the tetrad corresponding to partition $\lambda$.
\end{proof}

\begin{openproblem}
Determine a minimal set of multilinear identities in degree 10 for the tetrad which generate $\mathrm{New}(10)$
as an $S_{10}$-module.
\end{openproblem}

In the rest of this section we present some nonlinear special identities in degree 10 for the tetrad.
These identities correspond to the partitions $\lambda = s 1^t$ ($s+t=10$) for which column ``new'' in
Table \ref{tetrad10table} is nonzero, namely $\lambda = 61^4, 51^5, 41^6$.
The corresponding Young diagrams have a long first row and a long tail.
For these $\lambda$, the structure theory of $\field S_{10}$ allows us to assume that the identity is
a symmetric function of the variables in the first row and an alternating function of the variables
in the tail, and this reduces the computational problem to a manageable size.
We first recall the notion of linearization matrices, called operators in \cite[\S 2]{BP2009b}.

\begin{definition}
    Let $n = n_1 + \dots + n_k$ ($n \ge n_1 \ge \dots \ge n_k \ge 1$) be a partition
    and $\mu = \mu(a_1^{n_1}, \dots, a_k^{n_k})$ a quaternary monomial of multidegree $(n_1, \dots, n_k)$.
    For $i=1, \dots, k$ set $N_i= \{ n_1 + \dots + n_{i-1}+1, \dots, n_1 + \dots + n_i \}$
    so that $N_1 \cup N_2 \cup \cdots \cup N_k$ is a set partition of $\{ 1, \dots, n\}$ with $| N_i | = n_i$.
    The \textbf{basic linearization} of $\mu$ is the multilinear monomial $\ell(\mu)$ obtained from $\mu$
    by first replacing $a_i$ by $a_i'$ for $i=1,\dots,k$ and then replacing the $n_i$ occurrences of $a_i'$
    by $a_j$ for $j \in N_i$ with $j$ increasing from left to right.
    The \textbf{symmetric} and \textbf{alternating linearizations} of $\mu$ are
    \begin{align*}
        L^+(\mu) &= \sum_{\sigma_1 \in P_1} \cdots \sum_{\sigma_k \in P_k} \sigma_1 \cdots \sigma_k \ell(\mu),
        \\
        L^-(\mu) &= \sum_{\sigma_1 \in P_1} \cdots \sum_{\sigma_k \in P_k}
                 \epsilon_i(\sigma_1) \cdots \epsilon_k(\sigma_k) \sigma_1 \cdots \sigma_k \ell(\mu),
    \end{align*}
    where $P_i$ is the symmetric group on $N_i$ and $\epsilon_i \colon P_i \to \{ \pm 1 \}$ is the sign.
\end{definition}

\begin{remark}
    Over a field of characteristic 0 or $p > n$, the identity $I$ holds if and only if $L^+(I)$ holds;
    the same is not true for $L^-(I)$.
\end{remark}

\begin{definition} \label{deflinops}
    Let $\lambda$ be another partition of $n$ and let
    $R_\lambda \colon \mathbb{Q} S_n \to M_d(\mathbb{Q})$ be
    the corresponding irreducible representation of $S_n$ of dimension $d_\lambda$.
    The \textbf{symmetric} and \textbf{alternating linearization matrices} for partitions $n_1, \dots n_k$ and $\lambda$
    are
    \[
    L^+_{n_1, \dots, n_k; \lambda} = \sum_{\sigma \in P_1 \times \cdots \times P_k} R_\lambda(\sigma),
    \qquad
    L^-_{n_1, \dots, n_k; \lambda} = \sum_{\sigma \in P_1 \times \cdots \times P_k} \epsilon(\sigma) R_\lambda(\sigma).
    \]
\end{definition}

Linearization matrices make it much easier to compute the representation matrix for the linearization of
a nonlinear monomial $\mu$: we can use the basic linearization $\ell(\mu)$ of the nonlinear monomial
instead of using all the terms in $L^+( \mu )$.

\begin{lemma} \label{lemmalinops}
For all partitions $n_1, \dots, n_k$ and $\lambda$ and all monomials $\mu$, we have
  \[
  R_\lambda( L^+( \mu ) )
  =
  L^+_{n_1, \dots, n_k; \lambda} \cdot R_\lambda( \ell(\mu) ),
  \qquad
  R_\lambda( L^-( \mu ) )
  =
  L^-_{n_1, \dots, n_k; \lambda} \cdot R_\lambda( \ell(\mu) ).
  \]
\end{lemma}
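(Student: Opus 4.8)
The plan is to reduce everything to the single fact that $R_\lambda$ is a linear map which converts the action of $S_n$ on $\mathrm{Quad}(n)$ (permutation of subscripts) into left matrix multiplication. First I would observe that, because the sets $N_1, \dots, N_k$ are pairwise disjoint, the subgroups $P_i = \mathrm{Sym}(N_i)$ commute inside $S_n$ and the multiplication map $(\sigma_1, \dots, \sigma_k) \mapsto \sigma_1 \cdots \sigma_k$ is an injective homomorphism onto the Young subgroup $G = P_1 \cdots P_k \cong P_1 \times \cdots \times P_k$. Hence the iterated sums in the definitions of $L^+(\mu)$ and $L^-(\mu)$ collapse to single sums over $G$:
\[
L^+(\mu) = \sum_{\sigma \in G} \sigma \cdot \ell(\mu), \qquad
L^-(\mu) = \sum_{\sigma \in G} \epsilon(\sigma)\, \sigma \cdot \ell(\mu),
\]
where $\epsilon(\sigma_1 \cdots \sigma_k) = \epsilon_1(\sigma_1) \cdots \epsilon_k(\sigma_k)$ is just the sign of $\sigma$ as an element of $S_n$.

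Next I would invoke the identification $\mathrm{Quad}(n) \cong (\mathbb{Q} S_n)^t$ from \S\ref{subsectionrepmat}, under which $S_n$ acts on each of the $t$ group-algebra components by left multiplication: if $m$ is the multilinear monomial indexed by $\rho \in S_n$ in a fixed association type, then $\sigma \cdot m$ is the monomial indexed by $\sigma \rho$. Applying $R_\lambda$ componentwise, this gives $R_\lambda(\sigma \cdot x) = R_\lambda(\sigma)\, R_\lambda(x)$ for every $\sigma \in S_n$ and every $x \in \mathrm{Quad}(n)$; it suffices to verify this on monomials $x$ lying in a single component, where it is the homomorphism identity $R_\lambda(\sigma \rho) = R_\lambda(\sigma) R_\lambda(\rho)$, and then extend linearly over the $t$ components.

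Combining the two observations with linearity of $R_\lambda$ yields
\[
R_\lambda(L^+(\mu)) = \sum_{\sigma \in G} R_\lambda(\sigma \cdot \ell(\mu)) = \sum_{\sigma \in G} R_\lambda(\sigma)\, R_\lambda(\ell(\mu)) = \Big( \sum_{\sigma \in G} R_\lambda(\sigma) \Big) R_\lambda(\ell(\mu)),
\]
and this is $L^+_{n_1, \dots, n_k; \lambda} \cdot R_\lambda(\ell(\mu))$ by Definition \ref{deflinops}; inserting the weights $\epsilon(\sigma)$ throughout gives the alternating version in exactly the same way. I do not expect a genuine obstacle here: the only points requiring care are the bookkeeping in the first two steps — confirming that the $S_n$-action on $\mathrm{Quad}(n)$ really is \emph{left} (not right) multiplication in each group-algebra component, and that $\sigma_1 \cdots \sigma_k$ ranges bijectively over the Young subgroup $G$ as the $\sigma_i$ range independently over the $P_i$. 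Once these are pinned down, the lemma follows immediately from $S_n$-equivariance and linearity of $R_\lambda$.
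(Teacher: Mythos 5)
Your proposal is correct and is just a careful expansion of the paper's own (one-line) argument: the paper proves the lemma by appealing exactly to the definitions, the linearity of the maps, and the homomorphism property of $R_\lambda$, which are the three ingredients you spell out. No further comment is needed.
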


\begin{proof}
These equations follow directly from the definitions, the linearity of the maps, and the homomorphism property of
a representation.
\end{proof}

\begin{figure}
  \begin{align*}
  \sum_{\sigma \in S_4}
  \epsilon(\sigma)
  &\Big( \;
  \{ \{ \{ a, a, a, b^\sigma \}, a, a, c^\sigma \}, d^\sigma, a, e^\sigma \}
  - \{ \{ \{ a, a, a, b^\sigma \}, a, a, c^\sigma \}, d^\sigma, e^\sigma, a \}
  \\[-8pt]
  &{}
  + \{ \{ \{ a, a, b^\sigma, a \}, c^\sigma, a, a \}, d^\sigma, e^\sigma, a \}
  + \{ \{ \{ a, a, b^\sigma, a \}, c^\sigma, a, d^\sigma \}, a, e^\sigma, a \}
  \\[2pt]
  &{}
  + \{ \{ \{ a, b^\sigma, a, c^\sigma \}, a, d^\sigma, a \}, a, a, e^\sigma \}
  - \{ \{ \{ a, b^\sigma, a, c^\sigma \}, a, d^\sigma, a \}, a, e^\sigma, a \}
  \\[2pt]
  &{}
  - \{ \{ a, \{ a, a, b^\sigma, a \}, a, c^\sigma \}, d^\sigma, a, e^\sigma \}
  - \{ \{ a, \{ a, b^\sigma, a, c^\sigma \}, a, d^\sigma \}, a, e^\sigma, a \}
  \\[2pt]
  &{}
  + \{ a, \{ \{ a, a, b^\sigma, a \}, a, c^\sigma, d^\sigma \}, a, e^\sigma \}
  - \{ a, \{ \{ a, a, b^\sigma, c^\sigma \}, d^\sigma, a, a \}, a, e^\sigma \}
  \\[-2pt]
  &{}
  + \{ a, \{ \{ a, a, b^\sigma, c^\sigma \}, d^\sigma, a, a \}, e^\sigma, a \}
  + \{ a, \{ \{ a, b^\sigma, a, c^\sigma \}, a, d^\sigma, a \}, e^\sigma, a \}
  \; \Big)
  \equiv 0
  \\
  \sum_{\sigma \in S_5}
  \epsilon(\sigma)
  &\Big( \;
  \{ \{ \{ a, a, b^\sigma, c^\sigma \}, a, d^\sigma, a \}, a, e^\sigma, f^\sigma \}
  - \{ \{ \{ a, a, b^\sigma, c^\sigma \}, a, d^\sigma, a \}, e^\sigma, a, f^\sigma \}
  \\[-8pt]
  &{}
  - \{ \{ \{ a, a, b^\sigma, c^\sigma \}, d^\sigma, a, a \}, a, e^\sigma, f^\sigma \}
  + \{ \{ \{ a, a, b^\sigma, c^\sigma \}, d^\sigma, a, a \}, e^\sigma, a, f^\sigma \}
  \\[2pt]
  &{}
  - \{ \{ \{ a, b^\sigma, a, c^\sigma \}, a, d^\sigma, a \}, a, e^\sigma, f^\sigma \}
  + \{ \{ \{ a, b^\sigma, a, c^\sigma \}, a, d^\sigma, a \}, e^\sigma, a, f^\sigma \}
  \\[2pt]
  &{}
  + \{ \{ \{ a, b^\sigma, a, c^\sigma \}, d^\sigma, a, a \}, a, e^\sigma, f^\sigma \}
  - \{ \{ \{ a, b^\sigma, a, c^\sigma \}, d^\sigma, a, a \}, e^\sigma, a, f^\sigma \}
  \\[2pt]
  &{}
  - \{ \{ \{ b^\sigma, a, c^\sigma, d^\sigma \}, a, e^\sigma, a \}, a, f^\sigma, a \}
  - \{ \{ \{ b^\sigma, a, c^\sigma, d^\sigma \}, e^\sigma, a, a \}, f^\sigma, a, a \}
  \\[-2pt]
  &{}
  - \{ \{ \{ b^\sigma, c^\sigma, a, d^\sigma \}, a, e^\sigma, a \}, f^\sigma, a, a \}
  - \{ \{ \{ b^\sigma, c^\sigma, a, d^\sigma \}, e^\sigma, a, a \}, a, f^\sigma, a \}
  \; \Big)
  \equiv 0
  \\
  \sum_{\sigma \in S_5}
  \epsilon(\sigma)
  &\Big( \;
  \{ \{ \{ a, a, a, b^\sigma \}, c^\sigma, d^\sigma, a \}, e^\sigma, a, f^\sigma \}
  - \{ \{ \{ a, a, a, b^\sigma \}, c^\sigma, d^\sigma, a \}, e^\sigma, f^\sigma, a \}
  \\[-8pt]
  &{}
  + \{ \{ \{ a, b^\sigma, a, c^\sigma \}, d^\sigma, e^\sigma, a \}, a, a, f^\sigma \}
  - \{ \{ \{ a, b^\sigma, a, c^\sigma \}, d^\sigma, e^\sigma, a \}, a, f^\sigma, a \}
  \\[2pt]
  &{}
  - \{ \{ a, \{ a, a, b^\sigma, c^\sigma \}, a, d^\sigma \}, e^\sigma, f^\sigma, a \}
  - \{ \{ a, \{ a, a, b^\sigma, c^\sigma \}, d^\sigma, a \}, e^\sigma, a, f^\sigma \}
  \\[2pt]
  &{}
  + \{ \{ a, \{ a, a, b^\sigma, c^\sigma \}, d^\sigma, a \}, e^\sigma, f^\sigma, a \}
  + \{ \{ a, \{ a, b^\sigma, a, c^\sigma \}, a, d^\sigma \}, e^\sigma, f^\sigma, a \}
  \\[2pt]
  &{}
  - \{ \{ a, \{ b^\sigma, a, c^\sigma, d^\sigma \}, e^\sigma, a \}, a, a, f^\sigma \}
  - \{ \{ a, \{ b^\sigma, c^\sigma, a, d^\sigma \}, e^\sigma, a \}, a, f^\sigma, a \}
  \\[-2pt]
  &{}
  + \{ a, \{ \{ a, a, b^\sigma, c^\sigma \}, a, d^\sigma, e^\sigma \}, f^\sigma, a \}
  - \{ a, \{ \{ a, b^\sigma, a, c^\sigma \}, a, d^\sigma, e^\sigma \}, f^\sigma, a \}
  \; \Big)
  \equiv 0
  \\
  \sum_{\sigma \in S_5}
  \epsilon(\sigma)
  &\Big( \;
  \{ \{ \{ a, a, b^\sigma, a \}, c^\sigma, a, d^\sigma \}, e^\sigma, f^\sigma, a \}
  - \{ \{ \{ a, b^\sigma, a, c^\sigma \}, a, a, d^\sigma \}, e^\sigma, f^\sigma, a \}
  \\[-8pt]
  &{}
  - \{ \{ \{ a, b^\sigma, a, c^\sigma \}, a, d^\sigma, a \}, e^\sigma, f^\sigma, a \}
  + \{ \{ \{ a, b^\sigma, a, c^\sigma \}, d^\sigma, a, e^\sigma \}, a, a, f^\sigma \}
  \\[2pt]
  &{}
  + \{ \{ \{ a, b^\sigma, a, c^\sigma \}, d^\sigma, e^\sigma, a \}, a, a, f^\sigma \}
  + \{ \{ a, \{ a, a, b^\sigma, c^\sigma \}, d^\sigma, a \}, e^\sigma, a, f^\sigma \}
  \\[2pt]
  &{}
  + \{ \{ a, \{ a, b^\sigma, a, c^\sigma \}, a, d^\sigma \}, e^\sigma, a, f^\sigma \}
  + \{ \{ a, \{ a, b^\sigma, a, c^\sigma \}, d^\sigma, a \}, e^\sigma, f^\sigma, a \}
  \\[2pt]
  &{}
  - \{ \{ a, \{ b^\sigma, a, c^\sigma, d^\sigma \}, a, e^\sigma \}, a, a, f^\sigma \}
  - \{ \{ a, \{ b^\sigma, c^\sigma, a, d^\sigma \}, a, e^\sigma \}, a, f^\sigma, a \}
  \\[2pt]
  &{}
  + \{ \{ b^\sigma, \{ a, a, c^\sigma, a \}, d^\sigma, a \}, e^\sigma, a, f^\sigma \}
  - \{ a, \{ \{ a, a, b^\sigma, c^\sigma \}, d^\sigma, a, e^\sigma \}, a, f^\sigma \}
  \\[2pt]
  &{}
  - \{ a, \{ \{ a, b^\sigma, a, c^\sigma \}, a, d^\sigma, e^\sigma \}, a, f^\sigma \}
  - \{ a, \{ \{ a, b^\sigma, a, c^\sigma \}, a, d^\sigma, e^\sigma \}, f^\sigma, a \}
  \\[-2pt]
  &{}
  - \{ a, \{ \{ a, b^\sigma, a, c^\sigma \}, d^\sigma, a, e^\sigma \}, f^\sigma, a \}
  \; \Big)
  \equiv 0
  \\
  \sum_{\sigma \in S_6}
  \epsilon(\sigma)
  &\Big( \;
  \{ \{ \{ b^\sigma, a, c^\sigma, d^\sigma \}, a, a, e^\sigma \}, a, f^\sigma, g^\sigma \}
  - \{ \{ \{ b^\sigma, a, c^\sigma, d^\sigma \}, a, a, e^\sigma \}, f^\sigma, a, g^\sigma \}
  \\[-8pt]
  &{}
  + \{ \{ \{ b^\sigma, a, c^\sigma, d^\sigma \}, e^\sigma, a, f^\sigma \}, a, a, g^\sigma \}
  - \{ \{ \{ b^\sigma, c^\sigma, a, d^\sigma \}, a, a, e^\sigma \}, a, f^\sigma, g^\sigma \}
  \\[-2pt]
  &{}
  + \{ \{ \{ b^\sigma, c^\sigma, a, d^\sigma \}, a, a, e^\sigma \}, f^\sigma, a, g^\sigma \}
  + \{ \{ \{ b^\sigma, c^\sigma, a, d^\sigma \}, a, e^\sigma, f^\sigma \}, a, a, g^\sigma \}
  \; \Big)
  \equiv 0
  \end{align*}
\caption{Some special identities for the tetrad in degree 10}
\label{special10}
\end{figure}

\begin{theorem}
The multihomogeneous identities in Figure \ref{special10} are satisfied by the tetrad in every associative
algebra but are not consequences of the defining identities for JQS.
The sums are over all permutations $\sigma$ of the non-repeated variables and $\epsilon(\sigma)$ is
the sign.
\end{theorem}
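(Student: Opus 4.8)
The plan is to verify the theorem in two independent halves: first that each displayed polynomial in Figure~\ref{special10} is an identity for the tetrad (i.e.\ lies in $\mathrm{All}(10)$), and second that none of them lies in $\mathrm{Old}(10)$, so that each represents a genuinely new identity. For the first half I would expand every quaternary monomial occurring in a given display into the free associative algebra using $\{a,b,c,d\} = abcd + dcba$ applied at each operation symbol, collect the resulting multilinear associative words over the symmetrized/alternated sum, and check that everything cancels. Since the displays are multihomogeneous rather than multilinear, the honest route is to pass to the linearization: apply $L^+$ in the repeated variable $a$ (which preserves the property of being an identity, by the remark after the linearization definition) and $L^-$ is already incorporated via the $\epsilon(\sigma)$ sums over the non-repeated variables. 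Concretely, using Lemma~\ref{lemmalinops}, it suffices to compute the representation matrix $R_\lambda(\ell(\mu))$ of the basic linearization of each monomial and check that the appropriate combination $L^\pm_{n_1,\dots,n_k;\lambda}\cdot(\text{sum of }R_\lambda(\ell(\mu)))$ vanishes for the relevant $\lambda = 61^4, 51^5, 41^6$; equivalently one verifies that the corresponding element of $\mathrm{Quad}(10)$ lies in $\ker E_{10}$. In practice this is a finite check of matrix identities over $\mathbb{Q}$ (or $\mathbb{F}_p$ with $p>10$), carried out by the same computer-algebra machinery used for Theorem~\ref{tetrad10}.

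For the second half I would exhibit, for each of the five identities, a specific partition $\lambda \in \{61^4, 51^5, 41^6\}$ and show that the image of the (linearized) identity under $R_\lambda$ is \emph{not} in the row space of $\mathrm{RCF}_\lambda(\mathrm{Old}(10))$. Theorem~\ref{tetrad10} already tells us that $\texttt{new}(\lambda) > 0$ precisely for these three partitions, and gives the explicit multiplicities (namely $1$, $3$, $1$). The argument is: reduce the representation-matrix image of each identity modulo the row space of $\mathrm{RCF}_\lambda(\mathrm{Old}(10))$; if the reduction is nonzero, the identity is special. Doing this for all five identities and checking that collectively they span a complement of dimension matching the total $\texttt{new}$-multiplicity across $61^4, 51^5, 41^6$ (i.e.\ $1 + 3 + 1 = 5$) both proves that each individual identity is special and confirms that the five displayed identities together generate the whole of $\mathrm{New}(10)$ supported on partitions of the form $s1^t$.

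The construction itself — i.e.\ \emph{how} these particular nonlinear monomials were found — is the LLL step referenced in the introduction and in \S\ref{sectiondegree10}: one takes the canonical generators of $\mathrm{New}(10)$ in partitions $61^4, 51^5, 41^6$ produced by the algorithm of Step~3, pulls them back to integer coefficient vectors on the reduced monomial basis, assembles a lattice, and applies HNF followed by LLL with increasing parameter to obtain short vectors, which by inspection turn out to be supported on monomials with a single repeated variable $a$ and hence can be written as multihomogeneous identities. For the purposes of \emph{proving} the theorem as stated, however, none of that discovery process matters: the statement only asserts that the five displayed expressions are identities and are not consequences of the JQS axioms, and both assertions reduce to the finite linear-algebra verifications above.

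The main obstacle is purely computational scale rather than conceptual: the association types in degree 10 number eight, and even after exploiting the degree-4 symmetry the relevant modules have dimensions up to $d_\lambda = 126$ for these tail-heavy partitions, so $\mathrm{Old}(10)$ in partition $61^4$ is represented by a $9 d_\lambda \times 8 d_\lambda = 1134 \times 1008$ matrix whose RCF must be computed and stored; the sheer number of monomials appearing when one expands a degree-10 quaternary monomial with three nested tetrads (up to $2^3 = 8$ associative words per monomial, times dozens of monomials per identity, times $|S_6| = 720$ permutations in the last display) makes the naive bookkeeping delicate. The standard countermeasures apply — work in $\mathbb{F}_p$ for a convenient prime $p > 10$, use the linearization matrices of Definition~\ref{deflinops} so one only ever multiplies by $R_\lambda(\ell(\mu))$ rather than summing $R_\lambda$ over the full symmetric group on the repeated letters, and rely on the transpose-nullspace trick to keep matrix sizes as in Table~\ref{tetrad10table} — but ensuring that the modular computation faithfully reflects the characteristic-$0$ statement (semisimplicity of $\mathbb{F}_p S_{10}$ for $p>10$, per \cite[Lemma 8]{BP2009c}) is the point that requires care.
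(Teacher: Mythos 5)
Your proposal follows essentially the same route as the paper: establish that each displayed expression is an identity via the expansion of the tetrads in the multihomogeneous space (the nullspace of the nonlinear expansion matrix), then show it is special by checking in the isotypic component of the corresponding partition ($61^4$, $51^5$, $41^6$) that its image, computed via the combined symmetric/alternating linearization matrices of Lemma \ref{lemmalinops}, does not lie in the row space of $\mathrm{RCF}_\lambda(\mathrm{Old}(10))$ — i.e.\ it raises the rank $941$ to $942$ in the $61^4$ case — with the same modular-arithmetic safeguards. One caution: your ``equivalently'' claim that it suffices to check vanishing under $R_\lambda$ only for $\lambda = 61^4, 51^5, 41^6$ is not correct for the first half, since the symmetrized/alternated element also has components in other partitions (e.g.\ $71^3$ for the first identity), so the verification that each expression is an identity must rest on the direct expansion computation, as you state first and as the paper does.
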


\begin{proof}
The Young tableaux for partitions $\lambda = 61^4, 51^5, 41^6$ are
  \[
  \begin{array}{c} \scriptsize
  \yng(6,1,1,1,1)
  \end{array}
  \qquad\qquad
  \begin{array}{c} \scriptsize
  \yng(5,1,1,1,1,1)
  \end{array}
  \qquad\qquad
  \begin{array}{c} \scriptsize
  \yng(4,1,1,1,1,1,1)
  \end{array}
  \]
For each $\lambda$, the primitive idempotents in the Young basis of the two-sided ideal in
the group algebra correspond to standard tableaux; each idempotent is the product of the symmetric sum over
row permutations and the alternating sum over column permutations.
Hence an identity corresponding to a partition $\lambda$ with first row of size $s$ and tail of size $t$
is symmetric in the first $s$ variables and alternating in the last $t$ variables.
This allows us to reduce the number of monomials by considering only those in which the non-repeated variables
appear in increasing order, and identifying these monomials with the corresponding alternating sums.
We write in detail the proof for $\lambda = 61^4$; the others are similar.

Let $J$ be the multihomogeneous subspace with variables $a^6bcde$ in the free quaternary algebra with one operation
$\{-,-,-,-\}$ satisfying symmetry in degree 4.
For the eight association types \eqref{types10} we have respectively 134, 134, 134, 134, 86, 86, 49, 52 monomials,
and so $\dim J = 809$ (recall that the non-repeated variables appear in increasing order).
Let $A$ be the corresponding multihomogeneous subspace in the free associative algebra; we have $\dim A = 5040$.
We construct the $809 \times 5040$ expansion matrix $E$ in which the $(i,j)$ entry is the (integer) coefficient of
the $j$-th associative monomial in the expansion of the $i$-th quaternary monomial.
Using modular arithmetic, we compute $\mathrm{RCF}(E)$ and obtain rank 110 and nullity 699; the entries of the RCF
are 0, $\pm 1$, $\pm 2$, $\pm 4$.
We identify the columns which contain leading 1s; these columns $j_1, \dots, j_{110}$ form a basis of the column space.
Using integer arithmetic, we combine these columns of $E$ into a much smaller $809 \times 110$ matrix $E'$.
We compute $H = \mathrm{HNF}(E')$ and confirm that the rank and nullity have not changed in characteristic 0;
this computation also provides an $809 \times 809$ unimodular matrix $U$ such that $U E' = H$.
The last 699 rows of $U$ form a lattice basis for the integer nullspace of $E'$; they have already been reduced using
the LLL algorithm, and the entries are 0, $\pm 1$, $\pm 2$.
Using formula \eqref{basissize}, this basis has size $\approx 607$.
To get identities as simple as possible, we apply the LLL algorithm again, with standard parameter $3/4$;
this new basis has size $\approx 537$.
We sort these 699 vectors by increasing length.

We proceed as in the proof of Theorem \ref{tetrad10} using modular arithmetic ($p = 101$) but considering
only partition $\lambda = 61^4$.
The matrix has size $1134 \times 1008$.
We first compute the rank of the symmetries (704) and the symmetries with the liftings (941).
The usual next step would be to process the linearizations of the 699 nonlinear identities obtained in
the previous paragraph; this however produces multilinear identities with very large numbers of terms.
In order to reduce the size of the computation, we use the equations of Lemma \ref{lemmalinops}.
We require a combination of symmetric linearization over the first 6 variables (in the first row
of the Young diagram) with alternating linearization over the last 4 variables (in the tail).
This involves a slight modification of the linearization matrices in Definition \ref{deflinops}.
Our combined linearization matrix is the sum of $6!4! = 17280$ terms, but it factors as the product of
two partial linearization matrices with respectively
$6!$ terms (permutations $\sigma \in S_6$ acting on $\{1,\dots,6\}$)
and $4!$ terms (permutations $\tau \in S_4$ acting on $\{7,\dots,10\}$),
for a total of only $6! + 4!= 744$ terms:
  \[
  \sum_{\sigma \in S_6, \tau \in S_4}
  \!\!\!
  \epsilon(\tau)
  R_\lambda( \sigma \tau )
  =
  \sum_{\sigma \in S_6, \tau \in S_4}
  \!\!\!
  \epsilon(\tau)
  R_\lambda( \sigma )
  R_\lambda( \tau )
  =
  \sum_{\sigma \in S_6}
  R_\lambda( \sigma )
  \cdot
  \sum_{\tau \in S_4}
  \epsilon(\tau)
  R_\lambda( \tau ).
  \]
This $126 \times 126$ matrix is surprisingly simple: it has rank 1 and 21 nonzero entries.
Since the rank of all identities for $\lambda = 61^4$ is 942, we expect exactly one new identity.
After processing the 699 nonlinear identities, we find that only one increases the rank.
This is the identity in the statement of the theorem.
\end{proof}


\section{Four families of finite dimensional Jordan quadruple systems;\\
examples of universal associative envelopes}
\label{sectionUtetrad}

In this section we consider some small examples of special JQS,
and determine the structure of their universal associative envelopes.
We first define four infinite families $A_n$, $B_n$, $C_{pqr}$, $D_{pqq}$ of finite dimensional JQS
which are subspaces of matrix algebras which are closed under the tetrad.
  \begin{itemize}
  \item
  $A_n$ consists of all $n \times n$ matrices; the dimension is $n^2$.
  \item
  $B_n$ consists of all $n \times n$ symmetric matrices; the dimension is $\tfrac12 n(n{+}1)$.
  \end{itemize}
For the remaining systems, we recall the classification \cite{Carlsson} of simple associative $n$-ary algebras
in the case $n = 4$.
We choose integers $p, q, r \ge 1$ and consider block matrices of size $p+q+r$,
where $M_{pq}$ is a matrix of size $p \times q$:
  \begin{equation}
  \label{pqrmatrix}
  \left[
  \begin{array}{ccc}
  0      & 0      & M_{pr} \\
  M_{qp} & 0      & 0      \\
  0      & M_{rq} & 0
  \end{array}
  \right]
  \end{equation}
This space is closed under the associative quadruple product of matrices.
  \begin{itemize}
  \item
  $C_{pqr}$ ($p \ge q, r$) consists of all matrices \eqref{pqrmatrix}; the dimension is $pq+qr+rp$.
  \item
  $D_{pq}$ consists of all matrices \eqref{pqrmatrix} with $q = r$ where $M_{pq} = M_{qp}^t$ and $M_{qq}$ is symmetric;
  the dimension is $pq + \tfrac12 q(q{+}1)$.
  \end{itemize}
We consider the smallest non-trivial system $J$ in each family: $A_2$, $B_2$, $C_{111}$, $D_{11}$ with dimensions
4, 3, 3, 2 respectively.
To construct the universal associative envelope $U(J)$,
we use noncommutative Gr\"obner bases in free associative algebras \cite{BremnerGrobner}:
$U(J)$ is the quotient $F\langle B \rangle / I(G)$ of the free associative algebra $F\langle B \rangle$
on a basis $B$ of $J$ by the ideal $I(G)$ generated by the following set of relations:
  \[
  G = \big\{ \, abcd + dcba - \{a{,}b{,}c{,}d\} \mid a, b, c, d \in B \, \big\}.
  \]
We find a Gr\"obner basis of $I(G)$ and identify the monomials in $F\langle B \rangle$
which do not have the leading monomial of any Gr\"obner basis element as a subword.
The cosets of these monomials form a basis for $U(J)$.
If $U(J)$ is finite dimensional, then we can determine its structure using
the Wedderburn decomposition of associative algebras \cite{BremnerWedderburn}.
We consider the four smallest systems by increasing dimension.

\begin{proposition} \label{UD11}
We have $U(D_{11}) \cong \mathbb{Q} \oplus M_3(\mathbb{Q})$,
and hence up to isomorphism, $D_{11}$ has only two finite dimensional irreducible representations,
the 1-dimensional trivial representation and the 3-dimensional natural representation.
\end{proposition}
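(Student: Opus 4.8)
The plan is to apply to $J = D_{11}$ the scheme set out at the beginning of this section. First I would record an explicit basis $B = \{x,y\}$ of $D_{11}$ realized inside $M_3(\mathbb{Q})$; with the conventions of \eqref{pqrmatrix} one may take $x = E_{13} + E_{21}$ and $y = E_{32}$ in terms of matrix units, so that $D_{11} = \mathrm{span}\{x,y\}$ is two-dimensional. Next I would compute all $16$ tetrad products $\{a,b,c,d\} = abcd + dcba$ for $a,b,c,d \in B$ by multiplying inside $M_3(\mathbb{Q})$ and re-expressing the outcome in terms of $x$ and $y$; this step simultaneously confirms that $D_{11}$ is closed under the tetrad and produces its structure constants. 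These give the generating set
\[
G = \{\, abcd + dcba - \{a,b,c,d\} \mid a,b,c,d \in B \,\}
\]
of the ideal $I(G) \subseteq \mathbb{Q}\langle x,y\rangle$, with $U(D_{11}) = \mathbb{Q}\langle x,y\rangle / I(G)$.

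Second, I would fix a monomial order on $\mathbb{Q}\langle x,y\rangle$ (say degree-lexicographic with $x > y$) and run the noncommutative Buchberger algorithm on $G$, as in \cite{BremnerGrobner}, to obtain a Gr\"obner basis $\widetilde G$ of $I(G)$. The cosets of the normal words --- those monomials not containing the leading word of any element of $\widetilde G$ as a factor --- then form a basis of $U(D_{11})$; I expect to find exactly $10$ normal words, so that $\dim U(D_{11}) = 10$, and to read off the multiplication table of $U(D_{11})$ by reducing products of normal words modulo $\widetilde G$. This is the step I expect to be the main obstacle: noncommutative Gr\"obner bases need not be finite and the Buchberger procedure need not terminate, so the finiteness of $\widetilde G$ and of the set of normal words is something the computation must exhibit rather than something guaranteed in advance.

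Third, with the $10$-dimensional associative algebra $U(D_{11})$ in hand I would compute its Wedderburn decomposition following \cite{BremnerWedderburn}: determine the Jacobson radical and check that it vanishes, so that $U(D_{11})$ is semisimple; then produce a complete system of orthogonal central idempotents and identify each simple summand, verifying in particular that the underlying division algebras are $\mathbb{Q}$ itself rather than a proper extension. Since $10 = 1 + 3^2$, the outcome consistent with the two evident representations is $U(D_{11}) \cong \mathbb{Q} \oplus M_3(\mathbb{Q})$. From this the representation-theoretic conclusion is immediate: the finite dimensional irreducible modules of $U(D_{11})$ are those of its two simple factors, namely the $1$-dimensional module on which $x$ and $y$ act as $0$ and the $3$-dimensional module arising from the embedding $D_{11} \hookrightarrow M_3(\mathbb{Q})$, and since $U(J)$ is the universal associative envelope these are exactly the finite dimensional irreducible representations of the JQS $D_{11}$. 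As a consistency check one may verify directly that the $1$-dimensional trivial representation together with the natural $3$-dimensional one induce a surjection $U(D_{11}) \twoheadrightarrow \mathbb{Q} \oplus M_3(\mathbb{Q})$ --- in particular that the associative subalgebra of $M_3(\mathbb{Q})$ generated by $\{x,y\}$ is all of $M_3(\mathbb{Q})$ --- which, combined with $\dim U(D_{11}) = 10$, forces the isomorphism.
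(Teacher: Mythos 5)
Your plan is exactly the paper's argument: the same basis $a=E_{13}+E_{21}$, $b=E_{32}$, the same generating set $G$, a noncommutative Gr\"obner basis yielding precisely $10$ normal words and the structure constants, then semisimplicity (vanishing radical), a two-dimensional center split by orthogonal primitive idempotents, and identification of the simple summands as $\mathbb{Q}$ and $M_3(\mathbb{Q})$. Your closing shortcut (surjectivity onto $\mathbb{Q}\oplus M_3(\mathbb{Q})$ from the two evident representations plus $\dim U(D_{11})=10$) is a legitimate way to finish once the dimension is known, but the substance of the proof coincides with the paper's.
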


\begin{proof}
In the natural representation by $3 \times 3$ matrices, $D_{11}$ has this basis:
  \[
  a = \begin{bmatrix} 0 & 0 & 1 \\ 1 & 0 & 0 \\ 0 & 0 & 0 \end{bmatrix},
  \qquad
  b = \begin{bmatrix} 0 & 0 & 0 \\ 0 & 0 & 0 \\ 0 & 1 & 0 \end{bmatrix}.
  \]
The only nonzero quadruple products are $\{a,b,a,a\} = a$ and $\{b,a,a,b\} = b$.
Hence $U(D_{11}) = F\langle a, b \rangle / I(G)$ where $I(G)$ is generated by the following self-reduced set
$\{ g_1, \dots, g_{10} \}$ in lex order of leading monomials with $a \prec b$:
  \begin{align*}
  &
  a^4, \quad
  aba^2 + a^2ba - a, \quad
  ab^2a, \quad
  ba^3 + a^3b, \quad
  ba^2b - b,
  \\
  &
  baba + abab, \quad
  b^2a^2 + a^2b^2, \quad
  b^2ab + bab^2, \quad
  b^3a + ab^3, \quad
  b^4.
  \end{align*}
We make the Gr\"obner basis algorithm deterministic by considering the pairs $( g_i, g_j )$ in lex order
of the subscripts $(i,j)$;
whenever their composition is reducible, we reduce the highest term in deglex order using the generator
whose leading monomial is lowest in deglex order.
For example, $(g_2,g_1)$ produces this composition:
  \begin{align*}
  g_2 a^2 - ab g_1
  &=
  ( aba^2 + a^2ba - a ) a^2 - ab a^4
  =
  aba^4 + a^2ba^3 - a^3 - aba^4
  =
  a^2ba^3 - a^3.
  \end{align*}
We reduce this composition in three steps using $g_2, g_2, g_1$ as follows:
  \begin{align*}
  a^2ba^3 - a^3 - a g_2 a
  &=
  a^2ba^3 - a^3 - a ( aba^2 + a^2ba - a ) a
  =
  - a^3ba^2,
  \\
  - a^3ba^2 + a^2 g_2
  &=
  - a^3ba^2 + a^2 ( aba^2 + a^2ba - a )
  =
  a^4ba - a^3,
  \\
  a^4ba - a^3 - g_1 ba
  &=
  a^4ba - a^3 - a^4 ba = - a^3.
  \end{align*}
We cannot reduce further, so we record $a^3$ as a (monic) composition.
This process gives the following set $\{ h_1, \dots, h_{24} \}$ of distinct nonzero reduced compositions:
  \begin{align*}
  &
  a^3, \quad
  ab^2, \quad
  bab, \quad
  b^2a, \quad
  b^3, \quad
  a^3b, \quad
  a^2b^2, \quad
  abab, \quad
  ab^3, \quad
  bab^2, \quad
  a^3ba,
  \\
  &
  a^3b^2, \quad
  a^2bab, \quad
  a^2b^3 + b^2, \quad
  abab^2, \quad
  bab^3, \quad
  a^3bab, \quad
  a^2bab^2 - a^3b^3 - ab^2,
  \\
  &
  a^2bab^2 - bab, \quad
  a^2bab^2, \quad
  abab^3, \quad
  a^3bab^2 -2 a^2b^2, \quad
  a^2bab^3, \quad
  a^2bab^3 - ab^3.
  \end{align*}
We sort $\{ g_1, \dots, g_{10}, h_1, \dots, h_{24} \}$ by deglex order of leading monomials,
and compute the normal form of each element with respect to the preceding elements.
Many normal forms are 0, showing that $I(G)$ is generated by these five elements:
  \[
  b^2, \qquad
  a^3, \qquad
  bab, \qquad
  aba^2 + a^2ba - a, \qquad
  ba^2b - b.
  \]
All compositions of these elements reduce to 0, so we have a Gr\"obner basis for $I(G)$.
Only $d = 10$ monomials in $F\langle a, b \rangle$ are not divisible any of the leading monomials:
  \[
  1, \quad a, \quad b, \quad a^2, \quad ab, \quad ba, \quad a^2b, \quad aba, \quad ba^2, \quad a^2ba.
  \]
The cosets of these monomials modulo $I(G)$ form a basis for $U(D_{11})$.
We calculate the structure constants for $U(D_{11})$ by computing the normal forms of the products
of these monomials with respect to the Gr\"obner basis; see Table \ref{D111table}.

\begin{table}[h]
\[
\begin{array}{c|cccccccccc}
& 1 &   a &   b &   a^2 &   ab &   ba &   a^2b &   aba &   ba^2 &   a^2ba \\ \midrule
1 & 1 & a & b & a^2 & ab & ba & a^2b & aba & ba^2 & a^2ba \\
a & a & a^2 & ab &  0  & a^2b & aba &  0  & a^2ba & -a^2ba{+}a &  0  \\
b & b & ba &  0  & ba^2 &  0  &  0  & b &  0  &  0  & ba \\
a^2 & a^2 &  0  & a^2b &  0  &  0  & a^2ba &  0  &  0  & a^2 &  0  \\
ab & ab & aba &  0  & -a^2ba{+}a &  0  &  0  & ab &  0  &  0  & aba \\
ba & ba & ba^2 &  0  &  0  & b &  0  &  0  & ba &  0  &  0  \\
a^2b & a^2b & a^2ba &  0  & a^2 &  0  &  0  & a^2b &  0  &  0  & a^2ba \\
aba & aba & -a^2ba{+}a &  0  &  0  & ab &  0  &  0  & aba &  0  &  0  \\
ba^2 & ba^2 &  0  & b &  0  &  0  & ba &  0  &  0  & ba^2 &  0  \\
a^2ba & a^2ba & a^2 &  0  &  0  & a^2b &  0  &  0  & a^2ba &  0  &  0  \\
\midrule
\end{array}
\]
\caption{Structure constants for $U(D_{11})$}
\label{D111table}
\end{table}

The radical of $U(D_{11})$ is the nullspace of the Dickson matrix \cite[Corollary 12]{BremnerGrobner}:
the $d \times d$ matrix $\Delta$ defined in terms of the structure constants $c_{ij}^k$ for $U(D_{11})$:
  \[
  \qquad
  \Delta
  =
  \footnotesize
  \begin{bmatrix}
  10 & \cdot & \cdot & \cdot & \cdot & \cdot & 3 & 3 & 3 & \cdot \\
   \cdot & \cdot & \cdot & \cdot & 3 & 3 & \cdot & \cdot & \cdot & \cdot \\
   \cdot & \cdot & \cdot & 3 & \cdot & \cdot & \cdot & \cdot & \cdot & \cdot \\
   \cdot & \cdot & 3 & \cdot & \cdot & \cdot & \cdot & \cdot & \cdot & \cdot \\
   \cdot & 3 & \cdot & \cdot & \cdot & \cdot & \cdot & \cdot & \cdot & 3 \\
   \cdot & 3 & \cdot & \cdot & \cdot & \cdot & \cdot & \cdot & \cdot & \cdot \\
   3 & \cdot & \cdot & \cdot & \cdot & \cdot & 3 & \cdot & \cdot & \cdot \\
   3 & \cdot & \cdot & \cdot & \cdot & \cdot & \cdot & 3 & \cdot & \cdot \\
   3 & \cdot & \cdot & \cdot & \cdot & \cdot & \cdot & \cdot & 3 & \cdot \\
   \cdot & \cdot & \cdot & \cdot & 3 & \cdot & \cdot & \cdot & \cdot & \cdot
  \end{bmatrix}
  \qquad\qquad
  \left(
  \Delta_{ij}
  =
  \sum_{k=1}^d \sum_{\ell=1}^d
  c_{ji}^k c_{k\ell}^\ell
  \right)
  \]
This matrix has full rank, so $U(D_{11})$  is semisimple.
The center $Z(U(D_{11}))$ is the nullspace of the $d^2 \times d$ matrix from \cite[Corollary 15]{BremnerGrobner};
the RCF of this matrix is
  \[
  \footnotesize
  \begin{bmatrix}
  \cdot & 1 & \cdot & \cdot & \cdot & \cdot & \cdot & \cdot & \cdot & \cdot  \\
  \cdot & \cdot & 1 & \cdot & \cdot & \cdot & \cdot & \cdot & \cdot & \cdot  \\
  \cdot & \cdot & \cdot & 1 & \cdot & \cdot & \cdot & \cdot & \cdot & \cdot  \\
  \cdot & \cdot & \cdot & \cdot & 1 & \cdot & \cdot & \cdot & \cdot & \cdot  \\
  \cdot & \cdot & \cdot & \cdot & \cdot & 1 & \cdot & \cdot & \cdot & \cdot  \\
  \cdot & \cdot & \cdot & \cdot & \cdot & \cdot & 1 & \cdot & -1 & \cdot  \\
  \cdot & \cdot & \cdot & \cdot & \cdot & \cdot & \cdot & 1 & -1 & \cdot  \\
  \cdot & \cdot & \cdot & \cdot & \cdot & \cdot & \cdot & \cdot & \cdot & 1
  \end{bmatrix}
  \]
Hence a basis for $Z(U(D_{11}))$ consists of (the cosets of) 1 and $c = a^2b + aba + ba^2$.
We calculate $c^2 = c$, so $x(x-1)$ is the minimal polynomial of $c$.
Hence a basis for $Z(U(D_{11}))$ consisting of orthogonal primitive idempotents
is $\{ c, 1-c \}$.

We compute the two-sided ideals in $U(D_{11})$ generated by $c$ and $1-c$.
We find that $\dim \langle 1-c \rangle = 1$ so that $\langle 1-c \rangle \cong \mathbb{F}$ with basis
$\mathbbm{1} = 1 - a^2b - aba - ba^2$.
Moreover, $\dim \langle c \rangle = 9$, so we expect that $\langle c \rangle \cong M_3(\mathbb{F})$.
To obtain an isomorphism, we compute a basis for $\langle c \rangle$, and find that it consists
of the leading monomials of the Gr\"obner basis (excluding 1).
The coset of $b$ generates a 3-dimensional left ideal in $\langle c \rangle$ with
basis $e_1 = b$, $e_2 = ab$, $e_3 = a^2b$.
We identify these with the standard basis vectors in $\mathbb{F}^3$
and determine the elements of $\langle c \rangle$ corresponding to the matrix units $E_{ij}$
($1 \le i, j \le 3$).
For each $i, j$ we determine $c_1, \dots, c_9 \in \field$ so that the coset of the generic element,
  \[
  E = c_1 a + c_2 b + c_3 a^2 + c_4 ab + c_5 ba + c_6 a^2b + c_7 aba + c_8 ba^2 + c_9 a^2ba,
  \]
acts like the matrix unit $E_{ij}$; that is, $E_{ij} e_k = \delta_{jk} e_i$.
From Table \ref{D111table} we obtain:
  \begin{align*}
  E b
  &=
  c_8 b + c_1 ab + c_3 a^2b,
  \quad
  E ab
  =
  c_5 b + c_7 ab + ( c_1 {+} c_9 ) a^2b,
  \quad
  E a^2 b
  =
  c_2 b + c_4 ab + c_6 a^2b.
  \end{align*}
From this we obtain linear systems which determine $c_1, \dots, c_9$ for each $E_{ij}$,
and solving these systems gives this isomorphism $\langle c \rangle \cong M_3(\mathbb{F})$:
  \[
  \big[
  E_{11}, E_{12}, E_{13}, E_{21}, E_{22}, E_{23}, E_{31}, E_{32}, E_{33}
  \big]
  \longleftrightarrow
  \big[
  ba^2, ba, b, a{-}a^2ba, aba, ab, a^2, a^2ba, a^2b
  \big].
  \]
The set $\{ \, \mathbbm{1}, \, E_{ij} \mid 1 \le i, j \le 3 \, \}$ is a basis for $U(D_{11})$ which reflects
the decomposition into simple two-sided ideals, and proves the proposition.
\end{proof}

\begin{openproblem}
Determine the structure of $U(D_{pq})$ for all $p, q$.
In particular, do we always have the isomorphism $U(D_{pq}) \cong \field \oplus M_n(\field)$ where
$n = pq + \tfrac12 q(q{+}1)$?
\end{openproblem}

\begin{proposition} \label{propUC111}
We have $U(C_{111}) \cong \mathbb{Q} \oplus 2 \, M_3(\mathbb{Q})$,
and hence up to isomorphism, $C_{111}$ has only three finite dimensional irreducible representations:
1-dimensional trivial, 3-dimensional natural, and another 3-dimensional.
\end{proposition}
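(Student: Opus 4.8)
The plan is to mirror, almost step for step, the argument just given for $U(D_{11})$ in Proposition \ref{UD11}. First I would fix the natural $3\times 3$ matrix realization of $C_{111}$ coming from \eqref{pqrmatrix} with $p=q=r=1$: a basis $e_1, e_2, e_3$ where $e_1 = E_{13}$, $e_2 = E_{21}$, $e_3 = E_{32}$ (the three nonzero off-diagonal blocks), and compute the tetrad $\{x,y,z,w\} = xyzw + wzyx$ on all $4$-tuples of basis elements. Since the only length-$4$ products of these matrix units that land back in the span are the cyclic ones, I expect the nonzero quadruple products to be a short list (by analogy with $D_{11}$, something like $\{e_1,e_2,e_3,e_1\}$ and its cyclic relatives), which gives the finite generating set $G = \{\, xyzw + wzyx - \{x,y,z,w\} \mid x,y,z,w \in \{e_1,e_2,e_3\}\,\}$ for the defining ideal of $U(C_{111}) = F\langle e_1,e_2,e_3\rangle / I(G)$.

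Next I would run the noncommutative Gröbner basis computation of \cite{BremnerGrobner}: fix a deglex order on $F\langle e_1,e_2,e_3\rangle$, self-reduce $G$, and saturate under compositions (with the same deterministic tie-breaking rule used in Proposition \ref{UD11}) until the set is closed. Reading off the monomials not divisible by any leading monomial gives an explicit finite basis of $U(C_{111})$; the claim $\dim_{\mathbb Q} U(C_{111}) = 1 + 9 + 9 = 19$ tells me in advance how many such monomials to expect, which is a useful internal check. From the Gröbner basis I would tabulate the structure constants $c_{ij}^k$.

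With the structure constants in hand, the Wedderburn machinery of \cite{BremnerWedderburn,BremnerGrobner} finishes the job. I would form the Dickson matrix $\Delta_{ij} = \sum_{k,\ell} c_{ji}^k c_{k\ell}^\ell$ and check it has full rank $19$, so $U(C_{111})$ is semisimple; compute the center via the nullspace of the associated $d^2 \times d$ matrix and find it is $3$-dimensional, spanned by $1$ and two further central idempotents; decompose $1$ into three orthogonal primitive central idempotents $f_0, f_1, f_2$ with $\dim\langle f_0\rangle = 1$ and $\dim\langle f_1\rangle = \dim\langle f_2\rangle = 9$; and for each of the two $9$-dimensional blocks, exhibit an explicit isomorphism onto $M_3(\mathbb Q)$ by picking a generator of a $3$-dimensional left ideal, identifying it with $\mathbb{Q}^3$, and solving the linear systems that pin down the matrix units $E_{ij}$, exactly as in Proposition \ref{UD11}. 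This yields $U(C_{111}) \cong \mathbb{Q} \oplus M_3(\mathbb{Q}) \oplus M_3(\mathbb{Q})$, and the classification of irreducible representations follows from Wedderburn's theorem: the trivial $1$-dimensional one, the natural $3$-dimensional one (the block containing the defining representation), and a second $3$-dimensional one.

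The main obstacle I anticipate is purely computational: the Gröbner basis in three noncommuting generators can be substantially larger and slower to complete than the two-generator case of $D_{11}$, and confirming termination (that the basis really is finite and the quotient really is $19$-dimensional) requires care. A secondary subtlety is verifying that the two $9$-dimensional blocks are genuinely distinct as components — i.e.\ that the center is $3$-dimensional rather than that the two copies of $M_3$ have been conflated — and then distinguishing the two $3$-dimensional irreducibles: one is the natural representation by construction, and the other must be checked to be inequivalent to it, which I would do by comparing traces of the generators (or simply noting they lie in different two-sided ideals of $U(C_{111})$).
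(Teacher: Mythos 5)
Your proposal follows essentially the same route as the paper's proof: compute the nonzero tetrad products on the natural basis of $C_{111}$ (which are indeed cyclic, namely $\{a,c,b,a\}=a$, $\{b,c,a,b\}=b$, $\{c,b,a,c\}=c$), run the noncommutative Gr\"obner basis computation to get a 19-dimensional envelope, then use the Dickson matrix and the 3-dimensional center to split $U(C_{111})$ into blocks of dimensions $1, 9, 9$ exactly as in Proposition \ref{UD11}. The approach and all the key checkpoints (semisimplicity, center, primitive idempotents, identification of the two 3-dimensional irreducibles) match the paper's argument.
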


\begin{proof}
Our basis for $C_{111}$ consists of matrices defining the natural representation:
  \[
  a = \begin{bmatrix} 0 & 0 & 0 \\ 1 & 0 & 0 \\ 0 & 0 & 0 \end{bmatrix}, \qquad
  b = \begin{bmatrix} 0 & 0 & 0 \\ 0 & 0 & 0 \\ 0 & 1 & 0 \end{bmatrix}, \qquad
  c = \begin{bmatrix} 0 & 0 & 1 \\ 0 & 0 & 0 \\ 0 & 0 & 0 \end{bmatrix}.
  \]
The nonzero quadrilinear products are:
  $\{ a, c, b, a \} = a$, 
  $\{ b, c, a, b \} = b$, 
  $\{ c, b, a, c \} = c$.
The original generating set for $I(G)$ contains 45 elements and is already self-reduced.
We obtain 290 distinct nontrivial compositions;
the resulting set of 335 generators collapses to a self-reduced set of only 13 elements, which is a Gr\"obner basis:
  \begin{align*}
  &
  a^2, \quad
  b^2, \quad
  c^2, \quad
  aba, \quad
  aca, \quad
  bab, \quad
  bcb, \quad
  cac, \quad
  cbc,
  \\
  &
  acba + abca - a, \quad
  bcab + bacb - b, \quad
  cbac + cabc - c, \quad
  cabca - ca.
  \end{align*}
Only 19 monomials in $F\langle a, b, c \rangle$ do not have a leading monomial of a Gr\"obner basis element as a subword:
  \[
  1, \;
  a, \;
  b, \;
  c, \;
  ab, \;
  ac, \;
  ba, \;
  bc, \;
  ca, \;
  cb, \;
  abc, \;
  acb, \;
  bac, \;
  bca, \;
  cab, \;
  cba, \;
  abca, \;
  bacb, \;
  cabc.
  \]
Hence $U( C_{111} )$ has dimension 19; as in the proof of Proposition \ref{UD11} we find that $U( C_{111} )$ is semisimple,
and its center has dimension 3.
The cosets of these elements form a basis of the center consisting of orthogonal primitive idempotents:
  \[
  1 - abc - acb - bac - bca - cab - cba, \qquad
  abc  + bca + cab, \qquad
  acb + bac + cba.
  \]
These elements generate simple two-sided ideals of dimensions 1, 9, 9 respectively.
We omit the isomorphisms of each simple ideal with a full matrix algebra.
\end{proof}

\begin{openproblem}
Determine the structure of the universal associative envelope of $C_{pqr}$ for all $p, q, r$.
Do we always have the isomorphism $U( C_{pqr} ) \cong \field \oplus 2 \, M_n(\field)$ where $n = pq + qr + rp$?
\end{openproblem}

\begin{proposition}
We have $U(B_2) \otimes_\field \mathbb{K} \cong \mathbb{K} \oplus 3 \, M_2(\mathbb{K})$
where $\mathbb{K} = \field(\beta)$ with $\beta = 1+\sqrt{-3}$.
Hence over this quadratic extension field, $B_2$ has only four finite dimensional irreducible representations
up to isomorphism: 1-dimensional trivial, 2-dimensional natural,
and another two 2-dimensional.
\end{proposition}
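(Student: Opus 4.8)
The plan is to follow the method of Propositions \ref{UD11} and \ref{propUC111}---construct $U(B_2)$ via a noncommutative Gröbner basis, read off a monomial basis, verify semisimplicity with the Dickson matrix, and split the center into primitive idempotents---with one essential new feature: the Wedderburn decomposition of $U(B_2)$ over $\field$ is \emph{not} split, so the change of rings to $\mathbb{K}$ in the statement is forced by the arithmetic of the center rather than imposed by hand.

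First I would fix the basis
\[
a = \begin{bmatrix} 1 & 0 \\ 0 & 0 \end{bmatrix}, \qquad
b = \begin{bmatrix} 0 & 1 \\ 1 & 0 \end{bmatrix}, \qquad
c = \begin{bmatrix} 0 & 0 \\ 0 & 1 \end{bmatrix}
\]
of $B_2$, compute the tetrads $\{x,y,z,w\}$ for all $x,y,z,w \in \{a,b,c\}$ by matrix multiplication to obtain the structure constants of $B_2$ as a JQS, and form the generating set $G = \big\{\, xyzw + wzyx - \{x,y,z,w\} \mid x,y,z,w \in \{a,b,c\} \,\big\}$ of the ideal $I(G) \subset F\langle a,b,c\rangle$ with $U(B_2) = F\langle a,b,c\rangle/I(G)$. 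Running the deterministic Gröbner basis procedure used earlier (self-reduce $G$; form all compositions and reduce the highest term in deglex order; collapse the accumulated generators and check that all remaining compositions reduce to $0$) produces a finite Gröbner basis; the monomials of $F\langle a,b,c\rangle$ having no leading monomial of a basis element as a subword should be $13$ in number, so $\dim_\field U(B_2) = 13$ and their cosets form a basis. Normal forms of pairwise products of basis monomials give the structure constants $c_{ij}^k$; assembling the Dickson matrix $\Delta_{ij} = \sum_{k,\ell} c_{ji}^k c_{k\ell}^\ell$ of \cite[Corollary 12]{BremnerGrobner} and checking that it has full rank $13$ shows $U(B_2)$ is semisimple.

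Next I would compute the center as the nullspace of the $d^2 \times d$ matrix of \cite[Corollary 15]{BremnerGrobner} (with $d = 13$). I expect $Z(U(B_2))$ to have $\field$-dimension $4$: it should contain two orthogonal idempotents with rational minimal polynomial $x(x-1)$---accounting for summands $\field$ and $M_2(\field)$---together with a central element $z$ whose minimal polynomial is an irreducible quadratic over $\field$ with splitting field $\field(\sqrt{-3})$; since the statement names $\beta = 1+\sqrt{-3}$, the natural candidate is $x^2 - 2x + 4 = (x-\beta)(x-\bar\beta)$. This $z$ generates an $8$-dimensional simple $\field$-subalgebra with center $\field(\sqrt{-3})$, namely $M_2(\field(\sqrt{-3}))$, so that already over $\field$ one has $U(B_2) \cong \field \oplus M_2(\field) \oplus M_2(\field(\sqrt{-3}))$, of total dimension $1+4+8 = 13$.

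After extending scalars to $\mathbb{K} = \field(\beta) = \field(\sqrt{-3})$, the factor $\field(\sqrt{-3}) \otimes_\field \mathbb{K}$ splits as $\mathbb{K} \times \mathbb{K}$, so the center of $U(B_2) \otimes_\field \mathbb{K}$ acquires a basis of five orthogonal primitive idempotents---explicitly, on the $8$-dimensional block one writes $z = \beta e + \bar\beta(1-e)$, which recovers the two conjugate idempotents $e$ and $1-e$. Computing the two-sided ideal generated by each of the five central idempotents over $\mathbb{K}$ should give dimensions $1, 4, 4, 4, 4$; the one-dimensional ideal is $\cong \mathbb{K}$, and for each $4$-dimensional ideal I would exhibit a $2$-dimensional minimal left ideal, identify its basis with the standard basis of $\mathbb{K}^2$, and solve the resulting linear systems for the coefficients realizing the matrix units $E_{ij}$ exactly as in the proof of Proposition \ref{UD11}, obtaining an explicit isomorphism with $M_2(\mathbb{K})$. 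Collecting the four isomorphisms proves $U(B_2) \otimes_\field \mathbb{K} \cong \mathbb{K} \oplus 3\,M_2(\mathbb{K})$, and the statement about irreducible representations follows because a finite-dimensional semisimple algebra has exactly one irreducible module per simple factor, of $\mathbb{K}$-dimension equal to the matrix size. The main obstacle is pinpointing this non-split structure: one must detect that a generator of the center satisfies an irreducible quadratic, recognize $\field(\sqrt{-3})$ as its splitting field, and carry out the idempotent decomposition and the matrix-unit identification over $\mathbb{K}$, while retaining the Gröbner basis and structure-constant computations over $\field$ (or $\mathbb{F}_p$ with rational reconstruction) for efficiency.
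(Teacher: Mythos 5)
Your approach is essentially the paper's: construct the Gr\"obner basis for $I(G)$, obtain a $13$-element monomial basis of $U(B_2)$, verify semisimplicity via the Dickson matrix, find a $4$-dimensional center that splits only after adjoining $\sqrt{-3}$, and produce explicit matrix units as in Proposition \ref{UD11}. The differences from the paper are cosmetic: the paper labels the basis with $a,b$ the diagonal matrix units and $c$ the symmetric off-diagonal matrix, obtains the Gr\"obner basis $ab,\ ba,\ ca-bc,\ cb-ac,\ c^2-b^2-a^2,\ a^4-a,\ b^4-b,\ b^3c+a^3c-c$, takes the central basis $1,\ a+b,\ a^2+b^2,\ a^3+b^3$, and its chosen central element has minimal polynomial $t^2+3t+3$ rather than your guess $t^2-2t+4$; both quadratics have splitting field $\field(\sqrt{-3})$, so this is immaterial. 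Your added remark that the rational form is $\field \oplus M_2(\field) \oplus M_2(\field(\sqrt{-3}))$ is consistent with the paper, provided the explicit matrix-unit computation (which both you and the paper defer) rules out a quaternion division algebra in the $8$-dimensional block.

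One count in your write-up is wrong and must be corrected. After extending scalars to $\mathbb{K}$ the center has $\mathbb{K}$-dimension $4$, so it contains exactly \emph{four} orthogonal primitive idempotents, and the simple two-sided ideals have dimensions $1,4,4,4$, summing to $13=\dim U(B_2)$. Your ``five idempotents'' with ideal dimensions ``$1,4,4,4,4$'' would force $\dim_{\mathbb{K}}\bigl(U(B_2)\otimes_\field\mathbb{K}\bigr)=17$ and the conclusion $\mathbb{K}\oplus 4\,M_2(\mathbb{K})$, contradicting both the statement and your own final sentence. The slip comes from keeping the identity of the $8$-dimensional block as a primitive idempotent alongside its two conjugate halves: when that block splits over $\mathbb{K}$, its identity decomposes as $e+(1-e)$ and is \emph{replaced} by, not supplemented by, the idempotents $e$ and $1-e$. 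With that correction your argument coincides with the paper's proof, which exhibits the four central primitive idempotents $\zeta_1,\dots,\zeta_4$ explicitly and reads off the block dimensions $1,4,4,4$.
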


\begin{proof}
Our basis for $B_2$ consists of matrices defining the natural representation:
  \[
  a = \begin{bmatrix} 1 & 0 \\ 0 & 0 \end{bmatrix}, \qquad
  b = \begin{bmatrix} 0 & 0 \\ 0 & 1 \end{bmatrix}, \qquad
  c = \begin{bmatrix} 0 & 1 \\ 1 & 0 \end{bmatrix}.
  \]
There are 19 nonzero quadrilinear products of these generators (which we omit).
The original generating set for $I(G)$ contains 45 elements and is already self-reduced.
We obtain 533 distinct nontrivial compositions;
the resulting set of 578 generators collapses to a self-reduced set of only 8 elements, which is a Gr\"obner basis:
  \[
  ab, \quad
  ba, \quad
  ca - bc, \quad
  cb - ac, \quad
  c^2 - b^2 - a^2, \quad
  a^4 - a, \quad
  b^4 - b, \quad
  b^3c + a^3c - c.
  \]
Only 13 monomials in $F\langle a, b, c \rangle$ do not have a leading monomial of a Gr\"obner basis element as a subword:
  \[
  1, \quad
  a, \quad
  b, \quad
  c, \quad
  a^2, \quad
  ac, \quad
  b^2, \quad
  bc, \quad
  a^3, \quad
  a^2c, \quad
  b^3, \quad
  b^2c, \quad
  a^3c.
  \]
Hence $U( B_2 )$ has dimension 13; as before, we find that $U( B_2 )$ is semisimple.
The center has dimension 4 and the following basis:
1, $a+b$, $a^2 + b^2$, $a^3 + b^3$.
These elements are not orthogonal primitive idempotents;
unlike the previous two cases, to split the center we must extend the base field.
\begin{itemize}
\item
First splitting: set $e = 1$ and $x = a+b$.
The minimal polynomial of $x$ is $t^4 - t = t(t^3-1)$, so we set $y = x$ and $z = x^3-e$.
Then $y$ generates a 3-dimensional central ideal with identity element $a^3 + b^3$, and
$\zeta_1 = -z$ is a primitive idempotent.
\item
Second splitting: set $e = a^3 + b^3$ and $x = a + b$.
The minimal polynomial of $x$ is $t^3 - 1 = (t-1)(t^2+t+1)$, so we set $y = x-e$ and $z = x^2+x+e$.
Then $y$ generates a 2-dimensional central ideal with identity element
$-\frac13(a+b) - \frac13(a^2+b^2) + \frac23(a^3 + b^3)$,
and $\zeta_2 = \frac13 z$ is a primitive idempotent.
\item
Third splitting: set $e = -\frac13(a+b) - \frac13(a^2+b^2) + \frac23(a^3 + b^3)$ and $x = (a + b) - (a^3 + b^3)$.
The minimal polynomial of $x$ is $t^2 + 3t + 3 = (t - \alpha)(t - \bar{\alpha})$ where $\alpha = \frac12( -3 + \sqrt{-3} )$.
We set $y = x-\alpha e$ and $z = x-\bar\alpha e$; each generates a 1-dimensional central ideal.
We get the idempotents $\zeta_3 = \frac{-1}{\sqrt{-3}} y$ and $\zeta_4 = \frac{1}{\sqrt{-3}} z$.
\end{itemize}
Using the fact that $ab = ba = 0$ in $U( B_2 )$ since these monomials are in the Gr\"obner basis,
we obtain the central basis of orthogonal primitive idempotents:
  \begin{align*}
  \zeta_1 &= 1 - a^3 - b^3,
  \qquad
  \zeta_2 = \tfrac13 ( a + b + a^2 + b^2 + a^3 + b^3 ),
  \\
  \zeta_3 &= -\tfrac16 \big( \beta(a+b) + \bar\beta(a^2+b^2) - 2(a^3+b^3) \big),
  \\
  \zeta_4 &= -\tfrac16 \big( \bar\beta(a+b) + \beta(a^2+b^2) - 2(a^3+b^3) \big),
  \end{align*}
where $\beta = 1+\sqrt{-3}$.
These four elements generate simple two-sided ideals of dimensions 1, 4, 4, 4 respectively.
We omit the calculation of the isomorphisms of each simple ideal with a full matrix algebra.
\end{proof}

\begin{openproblem}
Determine the structure of the universal associative envelope of $B_n$ for all $n$.
In particular, after a quadratic extension $\mathbb{K}$ of $\field$,
do we always have the isomorphism $U( B_n ) \otimes_\field \mathbb{K} \cong \mathbb{K} \oplus 3 \, M_n(\mathbb{K})$?
\end{openproblem}

\begin{proposition} \label{propUA2}
We have $U(A_2) \otimes_\field \mathbb{K} \cong \mathbb{K} \oplus 6 \, M_2(\mathbb{K})$
where $\mathbb{K} = \field(\beta)$, $\beta = 1+\sqrt{-3}$.
Hence over this quadratic extension, $A_2$ has exactly seven finite dimensional irreducible representations
up to isomorphism:
1-dimensional trivial, 2-dimensional natural,
and another five 2-dimensional.
\end{proposition}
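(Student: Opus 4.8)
The argument runs along the same lines as Propositions \ref{UD11} and \ref{propUC111} and the analogous (unlabelled) result for $B_2$, now for the four-dimensional system $A_2 = M_2(\field)$. First I would take as a basis $B = \{ e_{11}, e_{12}, e_{21}, e_{22} \}$ of matrix units, compute the nonzero tetrads $\{a,b,c,d\} = abcd + dcba$ for $a,b,c,d \in B$, and form the defining set of relations
  \[
  G = \big\{ \, abcd + dcba - \{a,b,c,d\} \mid a,b,c,d \in B \, \big\}
  \]
of the ideal $I(G) \subset F\langle B \rangle$, so that $U(A_2) = F\langle B \rangle / I(G)$; the symmetry in degree 4 collapses the $256$ ordered quadruples to $136$ distinct relations, which after self-reduction shrink considerably. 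I would then run the noncommutative Gr\"obner basis algorithm of \cite{BremnerGrobner} using deglex order with the same deterministic choice of composition as in Proposition \ref{UD11}, and keep processing compositions until the set is closed. The essential point is that the computation terminates with a finite Gr\"obner basis whose leading monomials leave only finitely many standard monomials in $F\langle B \rangle$; the cosets of these monomials then form a basis of $U(A_2)$, which I expect to have dimension $25 = 1 + 6 \cdot 4$, consistent with the stated decomposition.

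With an explicit basis of $U(A_2)$ in hand, I would compute its multiplication table by reducing products of standard monomials modulo the Gr\"obner basis, and then apply the Wedderburn machinery of \cite{BremnerGrobner, BremnerWedderburn} exactly as in the earlier proofs: the radical is the nullspace of the Dickson matrix $\big( \Delta_{ij} = \sum_{k,\ell} c_{ji}^k c_{k\ell}^\ell \big)$ of \cite[Corollary 12]{BremnerGrobner}, and I would check that this $25 \times 25$ matrix has full rank, so $U(A_2)$ is semisimple; the center $Z(U(A_2))$ is the nullspace of the matrix of \cite[Corollary 15]{BremnerGrobner}, and by the claimed structure it is a commutative semisimple algebra of dimension $7$.

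The real work, and the only place the ground field must grow, is in splitting $Z(U(A_2))$ into seven copies of the field by a sequence of idempotent extractions, as was done for $B_2$: one repeatedly picks a central element, factors its minimal polynomial, and peels off the summand attached to each irreducible factor. A minimal polynomial here acquires an irreducible quadratic factor which, up to a linear shift, is $t^2 + t + 1$, whose splitting field is $\field(\sqrt{-3}) = \field(\beta)$ with $\beta = 1 + \sqrt{-3}$; no further extension is needed, and over $\mathbb{K}$ one obtains orthogonal primitive central idempotents $\zeta_0, \zeta_1, \dots, \zeta_6$ whose two-sided ideals $\langle \zeta_i \rangle$ have dimensions $1, 4, 4, 4, 4, 4, 4$. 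Finally, for each four-dimensional ideal I would produce an explicit isomorphism with $M_2(\mathbb{K})$ by choosing a generator of a two-dimensional left ideal, identifying it with $\mathbb{K}^2$, and solving the linear systems that realize the matrix units — the same bookkeeping carried out in full for $U(D_{11})$. I expect the Gr\"obner basis computation and the field arithmetic for the center, rather than any conceptual difficulty, to be the main obstacle; both are kept manageable by the modular-arithmetic and rational-reconstruction conventions described in \S\ref{sectioncomputational}.
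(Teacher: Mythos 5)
Your proposal matches the paper's proof in all essentials: the same matrix-unit basis and $136$ self-reduced relations, a Gr\"obner basis computation yielding $25$ standard monomials so $\dim U(A_2) = 25$, semisimplicity via the Dickson matrix, a $7$-dimensional center split over $\field(\sqrt{-3})$ into orthogonal primitive idempotents generating ideals of dimensions $1,4,4,4,4,4,4$, and explicit identifications with $M_2(\mathbb{K})$. This is essentially the same approach as the paper, carried out correctly.
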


\begin{proof}
Our basis for $A_2$ consists of these matrices in the natural representation:
  \[
  a = \begin{bmatrix} 1 & 0 \\ 0 & 0 \end{bmatrix}, \qquad
  b = \begin{bmatrix} 0 & 1 \\ 0 & 0 \end{bmatrix}, \qquad
  c = \begin{bmatrix} 0 & 0 \\ 1 & 0 \end{bmatrix}, \qquad
  d = \begin{bmatrix} 0 & 0 \\ 0 & 1 \end{bmatrix}.
  \]
There are 31 nonzero quadrilinear products of these generators (which we omit).
The original generating set of $I(G)$ contains 136 elements and is already self-reduced.
We obtain 2769 distinct nontrivial compositions;
the resulting 2905 generators collapses to a self-reduced set of 25 elements, which is a Gr\"obner basis:
  \begin{align*}
  &
  ad, \quad
  b^2, \quad
  bd - ab, \quad
  c^2, \quad
  cd - ac, \quad
  da, \quad
  db - ba, \quad
  dc - ca, \quad
  d^2 - cb - bc + a^2,
  \\
  &
  aba, \quad
  aca, \quad
  acb + abc - a^3, \quad
  bab, \quad
  bca - abc, \quad
  bcb - ba^2 - a^2b, \quad
  cac,
  \\
  &
  cba + abc - a^3, \quad
  cbc - ca^2 - a^2c, \quad
  a^4 - a, \quad
  ba^3 + a^3b - b, \quad
  ba^2b, \quad
  ca^3 + a^3c - c,
  \\
  &
  ca^2b + ba^2c - d, \quad
  ca^2c, \quad
  cabc + a^3c - c.
  \end{align*}
Only 25 monomials in $F\langle a, b, c, d \rangle$ do not have a leading monomial of a Gr\"obner basis element as a subword:
  \begin{align*}
  &
  1, \quad
  a, \quad
  b, \quad
  c, \quad
  d, \quad
  a^2, \quad
  ab, \quad
  ac, \quad
  ba, \quad
  bc, \quad
  ca, \quad
  cb, \quad
  a^3, \quad
  a^2b,
  \\
  &
  a^2c, \quad
  abc, \quad
  ba^2, \quad
  bac, \quad
  ca^2, \quad
  cab, \quad
  a^3b, \quad
  a^3c, \quad
  a^2bc, \quad
  ba^2c, \quad
  a^3bc.
  \end{align*}
Hence $U( A_2 )$ has dimension 25; as before, we find that $U( A_2 )$ is semisimple.
The center has dimension 7 and the following basis:
  \[
  1, \quad
  a+d, \quad
  bc+cb, \quad
  a^3-abc+bac, \quad
  abc+cab, \quad
  a-a^2bc+ba^2c, \quad
  -\tfrac12(a^2+bc)+a^3bc.
  \]
These elements are not orthogonal primitive idempotents;
as in the previous case, to split the center we must extend the base field.
We obtain the following central basis consisting of orthogonal primitive idempotents:
  \begin{align*}
  &\!\!\!\!
  1 - a^3 - bac - cab,
  \\
  \tfrac13
  &\big(
  d - a^2 + cb + abc + cab + a^2bc - ba^2c + 2 a^3bc
  \big),
  \\
  \tfrac13
  &\big(
  a + a^2 + bc + a^3 - abc + bac - a^2bc + ba^2c - 2 a^3bc
  \big),
  \\
  -\tfrac16
  &\big(
  \beta  a
  + \bar\beta  a^2
  + \bar\beta  bc
  - 2 a^3
  + 2 abc
  - 2 bac
  - \beta  a^2bc
  + \beta  ba^2c
  - 2 \bar\beta  a^3bc
  \big),
  \\
  -\tfrac16
  &\big(
  \beta  d
  - \bar\beta  a^2
  + \bar\beta  cb
  - 2  abc
  - 2  cab
  + \beta  a^2bc
  - \beta  ba^2c
  + 2\bar\beta  a^3bc
  \big),
  \\
  -\tfrac16
  &\big(
  \bar\beta  a
  + \beta  a^2
  + \beta  bc
  - 2  a^3
  + 2  abc
  - 2  bac
  - \bar\beta  a^2bc
  + \bar\beta  ba^2c
  - 2 \beta  a^3bc
  \big),
  \\
  -\tfrac16
  &\big(
  \bar\beta  d
  - \beta  a^2
  + \beta  cb
  -2  abc
  -2  cab
  + \bar\beta  a^2bc
  - \bar\beta  ba^2c
  + 2\beta  a^3bc
  \big),
  \end{align*}
where $\beta = 1 + \sqrt{-3}$.
These seven elements generate simple two-sided ideals of dimensions 1, 4, 4, 4, 4, 4, 4 respectively.
We omit the calculation of the isomorphisms of each simple ideal with a full matrix algebra.
\end{proof}

\begin{openproblem}
Determine the structure of the universal associative envelope for $A_n$ for all $n$.
In particular, after making a quadratic extension $\mathbb{K}$ of $\field$,
do we always have the isomorphism $U( A_n ) \otimes_\field \mathbb{K} \cong \mathbb{K} \oplus 6 \, M_n(\mathbb{K})$?
\end{openproblem}


\section{Defining identities for anti-Jordan quadruple systems} \label{sectionAJQS}

In this and the next two sections we present analogous results for the anti-tetrad
$[a,b,c,d] = abcd - dcba$.
Since the methods are very similar, we omit most details.

\begin{lemma} \label{antitetrad4}
Every multilinear identity in degree 4 satisfied by the anti-tetrad is a consequence of the skew-symmetry
$[a,b,c,d] + [d,c,b,a] \equiv 0$.
\end{lemma}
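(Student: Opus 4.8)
The plan is to repeat the argument of Lemma~\ref{tetrad4} almost verbatim, with the single modification that reversal now contributes a minus sign. Take two copies of the group algebra $\mathbb{Q}S_4$: the first, call it $Q$, is the multilinear subspace of degree 4 in the free quaternary algebra with one operation $[-,-,-,-]$ and no relations imposed, with basis $\{[a^\sigma,b^\sigma,c^\sigma,d^\sigma]\mid\sigma\in S_4\}$ ordered lexicographically; the second, call it $A$, is the multilinear subspace of degree 4 in the free associative algebra, with basis $\{a^\sigma b^\sigma c^\sigma d^\sigma\mid\sigma\in S_4\}$ ordered lexicographically. The expansion map $E\colon Q\to A$ sending $[a,b,c,d]$ to $abcd-dcba$ is an $S_4$-module morphism, and $\ker E$ is precisely the module of multilinear degree-4 identities for the anti-tetrad. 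Writing $\rho=(14)(23)\in S_4$ for the permutation that reverses the four arguments, under the identification of both $Q$ and $A$ with $\mathbb{Q}S_4$ the map $E$ is right multiplication by $e-\rho$.

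First I would initialize the $24\times 24$ matrix $E$ whose $(i,j)$ entry is the coefficient of the $i$-th associative monomial in the expansion of the $j$-th quaternary monomial --- now each column has one entry $+1$ and one entry $-1$ --- and compute its RCF. I expect rank $12$ and nullity $12$: since $(e-\rho)^2=2(e-\rho)$, the element $\tfrac12(e-\rho)$ is idempotent, so the image of $E$ has dimension equal to its trace, namely $\tfrac12(24-0)=12$. Next I would extract a basis for the nullspace by setting the free variables to the standard basis vectors in $\mathbb{Q}^{12}$ and solving for the leading variables, assemble these vectors into a $12\times 24$ matrix, and compute its RCF. The claim to verify is that every row of this RCF is a left $S_4$-translate of the skew-symmetry element $[a,b,c,d]+[d,c,b,a]$.

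This last verification also admits a conceptual form which doubles as a consistency check: the left ideal $\mathbb{Q}S_4\cdot(e+\rho)$ is contained in $\ker E$ because $E(e+\rho)=(abcd-dcba)+(dcba-abcd)=0$, and it has dimension $12$ because $\tfrac12(e+\rho)$ is an idempotent of trace $12$; two $12$-dimensional subspaces, one inside the other, must coincide. I do not anticipate a genuine obstacle here: the matrices are small enough to handle over $\mathbb{Q}$ directly, with no need for modular arithmetic or rational reconstruction. The only point requiring care is the bookkeeping of signs when $S_4$ acts on the anti-tetrad, and in particular when a reversed monomial happens to coincide with a permuted copy of the identity monomial --- exactly the caveat recorded parenthetically in Step~1 of \S\ref{sectioncomputational}.
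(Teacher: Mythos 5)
Your proposal is correct and follows essentially the same route as the paper, which omits the proof of this lemma precisely because it is the sign-adjusted analogue of the proof of Lemma~\ref{tetrad4}: set up the $24\times 24$ expansion matrix, find rank and nullity $12$, and check that the nullspace is spanned by the $S_4$-translates of the skew-symmetry. The one genuine addition in your write-up is the group-algebra argument: identifying $E$ with right multiplication by $e-\rho$, $\rho=(14)(23)$, and using the complementary idempotents $\tfrac12(e\pm\rho)$ (each of trace $12$, since right multiplication by $\rho$ has trace $0$) gives $\ker E = \mathbb{Q}S_4\cdot(e+\rho)$ directly, which is exactly the submodule of consequences of the skew-symmetry; this makes the machine computation a consistency check rather than the proof itself, and is if anything cleaner than what the paper does for the tetrad case.
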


\begin{theorem} \label{antitetrad7}
Every multilinear identity in degree 7 satisfied by the anti-tetrad is a consequence of
skew-symmetry in degree 4 and these two identities in degree 7:
  \begin{align*}
  & [[a,b,c,d],e,f,g]  
  - [[a,b,f,e],d,c,g]  
  + [[d,c,f,e],a,b,g]  
  + [g,[b,a,d,c],f,e]  
  \\
  &
  {}
  - [g,[b,a,e,f],c,d]  
  + [g,[c,d,e,f],b,a]  
  \equiv 0,
  \\[4pt]
  & [[a,b,c,d],e,f,g]  
  - [[a,f,g,c],b,e,d]  
  + [[c,a,b,d],g,e,f]  
  - [[c,b,a,f],g,e,d]  
  \\
  &
  {}
  - [[c,g,d,b],a,e,f]  
  - [[c,g,e,f],a,b,d]  
  + [[c,g,e,d],b,a,f]  
  - [[b,a,c,f],e,d,g]  
  \\
  &
  {}
  - [[b,c,f,g],a,e,d]  
  + [[b,e,d,g],a,c,f]  
  - [[g,f,e,d],c,b,a]  
  + [[g,d,e,f],c,a,b]  
  \\
  &
  {}
  - [[f,a,b,d],e,g,c]  
  - [[f,c,e,d],g,a,b]  
  - [a,[b,c,g,f],e,d]  
  - [c,[a,b,d,g],e,f]  
  \\
  &
  {}
  + [c,[b,a,f,g],e,d]  
  + [b,[a,c,g,d],e,f]  
  - [b,[a,g,f,c],e,d]  
  + [b,[a,g,d,e],c,f]  
  \\
  &
  {}
  + [d,[c,b,a,e],f,g]  
  \equiv 0.
  \end{align*}
\end{theorem}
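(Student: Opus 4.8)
The plan is to repeat, while bookkeeping the signs produced by reversal, the three-step computation of the proof of Theorem \ref{tetrad7}. By the skew-symmetry of Lemma \ref{antitetrad4}, each of the four association types for a quadrilinear operation in degree 7 equals, up to sign, one of the two types $[[-,-,-,-],-,-,-]$ and $[-,[-,-,-,-],-,-]$; for instance $[a_1,a_2,[a_3,a_4,a_5,a_6],a_7] = -[a_7,[a_3,a_4,a_5,a_6],a_2,a_1]$ and $[a_1,a_2,a_3,[a_4,a_5,a_6,a_7]] = -[[a_4,a_5,a_6,a_7],a_3,a_2,a_1]$. A basis of the multilinear space $\mathrm{Quad}(7)$ for one skew-symmetric operation then consists of the $2 \cdot 7!/2 = 5040$ monomials in these two types, ordered by type and then lexicographically by permutation, and the expansion map $E\colon \mathrm{Quad}(7) \to \mathbb{Q} S_7$ is the $S_7$-module homomorphism determined on the two identity monomials by
\begin{align*}
[[a,b,c,d],e,f,g] &= abcdefg - dcbaefg - gfeabcd + gfedcba, \\
[a,[b,c,d,e],f,g] &= abcdefg - aedcbfg - gfbcdea + gfedcba.
\end{align*}

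First I would initialize, over $\mathbb{F}_p$ with $p = 101$ to avoid the entry blow-up that plagues row reduction over $\mathbb{Q}$, the $5040 \times 5040$ matrix whose $(i,j)$ entry is the coefficient of the $i$-th associative monomial in the expansion of the $j$-th quaternary monomial, compute its row canonical form, and read off its rank and its nullity, which equals $\dim \mathrm{All}(7)$. Setting the free variables to the standard basis vectors and solving for the leading variables yields a linear basis of the nullspace; I would clear denominators, reduce to primitive integer vectors (via rational reconstruction, and the LLL algorithm if they are large), sort them by increasing Euclidean length, and, as a consistency check, expand one or two of the corresponding identities by hand into $\mathbb{Q} S_7$ to see that they vanish.

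Next I would extract a minimal set of $S_7$-module generators exactly as in the proof of Theorem \ref{tetrad7}: keeping a working matrix, I process the sorted nullspace vectors in turn, and whenever a vector falls outside the current row space I adjoin the full $S_7$-orbit of the corresponding identity, recompute the row canonical form, and record the vector as a generator if the rank increased. I expect only a few vectors to raise the rank, and that after discarding those which are consequences of the others --- tested by applying all $7!$ permutations and checking membership in the row space --- exactly the two identities of the statement remain, up to scaling; a modest search over scalar multiples and over the short nullspace vectors may be needed to land on those particular normalized forms, and I would finish by verifying directly that neither displayed identity follows from the other. The computation has nothing conceptually hard in it, so the main obstacle is organizational: the sheer size of the linear algebra, handled by modular arithmetic together with rational reconstruction and LLL, and the delicate sign bookkeeping in the reduction of association types and in the definition of $E$, where a single misplaced sign silently corrupts the entire kernel.
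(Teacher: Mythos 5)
Your proposal is correct and follows essentially the same route as the paper, whose proof of this theorem simply re-runs the computation of Theorem \ref{tetrad7} with the sign bookkeeping you describe (reporting rank $2519$ and nullity $2521$ for the $5040\times 5040$ expansion matrix). Your sign reductions of the association types and your two expansion formulas for the identity monomials agree with what the analogous computation requires, so the only remaining work is the machine verification itself.
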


\begin{proof}
For the anti-tetrad the rank is 2519 and the nullity is 2521.
\end{proof}

\begin{definition} \label{defajqs}
An \textbf{anti-Jordan quadruple system} (\textbf{AJQS}) is a vector space $Q$
over a field $\mathbb{F}$ of characteristic 0 or $p > 7$
with a quadrilinear map $[-,-,-,-]\colon Q^4 \to Q$ satisfying the skew-symmetry of
Lemma \ref{antitetrad4} and the identities of Theorem \ref{antitetrad7}.
\end{definition}

\begin{table}[h]
\begin{tabular}{lr|rrr|rrrr|r}
&\; & \multicolumn{3}{|c|}{symmetries} & \multicolumn{4}{|c|}{expansions}
\\
\midrule
$\lambda$ &\; $d_\lambda$ &\; rows &\; cols &\; sym &\; rows &\; cols &\; rank &\; null &\; new
\\
\midrule
  $7$      &   1 &   3 &   2 &   2 &   2 &   1 &   0 &   2 &   0 \\
  $61$     &   6 &  18 &  12 &   8 &  12 &   6 &   3 &   9 &   1 \\
  $52$     &  14 &  42 &  28 &  16 &  28 &  14 &   8 &  20 &   4 \\
  $51^2$   &  15 &  45 &  30 &  14 &  30 &  15 &   6 &  24 &  10 \\
  $43$     &  14 &  42 &  28 &  16 &  28 &  14 &   7 &  21 &   5 \\
  $421$    &  35 & 105 &  70 &  34 &  70 &  35 &  18 &  52 &  18 \\
  $41^3$   &  20 &  60 &  40 &  16 &  40 &  20 &  10 &  30 &  14 \\
  $3^21$   &  21 &  63 &  42 &  22 &  42 &  21 &   9 &  33 &  11 \\
  $32^2$   &  21 &  63 &  42 &  22 &  42 &  21 &  12 &  30 &   8 \\
  $321^2$  &  35 & 105 &  70 &  34 &  70 &  35 &  17 &  53 &  19 \\
  $31^4$   &  15 &  45 &  30 &  14 &  30 &  15 &   9 &  21 &   7 \\
  $2^31$   &  14 &  42 &  28 &  16 &  28 &  14 &   7 &  21 &   5 \\
  $2^21^3$ &  14 &  42 &  28 &  16 &  28 &  14 &   6 &  22 &   6 \\
  $21^5$   &   6 &  18 &  12 &   8 &  12 &   6 &   3 &   9 &   1 \\
  $1^7$    &   1 &   3 &   2 &   2 &   2 &   1 &   0 &   2 &   0 \\
\midrule
\end{tabular}
\medskip
\caption{$S_7$-module multiplicities for the anti-tetrad in degree 7}
\label{antitetrad7table}
\end{table}

\begin{lemma} \label{antitetrad7lemma}
Over a field of characteristic 0 or $p > 7$, for each partition $\lambda$
column ``new'' of Table \ref{antitetrad7table} gives the multiplicity
of $[\lambda]$ in the kernel of the expansion map for the anti-tetrad.
\end{lemma}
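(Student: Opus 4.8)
The plan is to follow the proof of Lemma~\ref{tetrad7lemma} essentially verbatim, replacing the tetrad by the anti-tetrad and tracking the sign changes produced by reversal. First I would fix the two association types in degree~$7$ that survive the degree-$4$ skew-symmetry of Lemma~\ref{antitetrad4},
\[
[[-,-,-,-],-,-,-], \qquad [-,[-,-,-,-],-,-],
\]
so that $t = t(7) = 2$, and record the corresponding two skew-symmetry relations in degree~$7$,
\begin{align*}
& [[a_1,a_2,a_3,a_4],a_5,a_6,a_7] + [[a_4,a_3,a_2,a_1],a_5,a_6,a_7] \equiv 0, \\
& [a_1,[a_2,a_3,a_4,a_5],a_6,a_7] + [a_1,[a_5,a_4,a_3,a_2],a_6,a_7] \equiv 0.
\end{align*}
Since skew-symmetry in degree~$4$ is the only identity below degree~$7$ and has already been absorbed into the reduced set of two association types, there are no liftings, and the submodule $\mathrm{Old}(7)$ coincides with $\mathrm{Symm}(7)$.

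Next, for each partition $\lambda$ of~$7$, I would run the three steps of \S\ref{subsectionrepmat}. For the symmetries I build the $3 d_\lambda \times 2 d_\lambda$ matrix (entries ``rows'' and ``cols'' of Table~\ref{antitetrad7table}) in which two block-rows accumulate the row canonical form and the third block-row processes each incoming skew-symmetry relation via its representation matrices \cite{BP2011}; the rank of this matrix is column ``sym''. For the expansions I use the identity-monomial expansions
\begin{align*}
[[a,b,c,d],e,f,g] &= abcdefg - dcbaefg - gfeabcd + gfedcba, \\
[a,[b,c,d,e],f,g] &= abcdefg - aedcbfg - gfbcdea + gfedcba,
\end{align*}
together with the fact that the expansion map $E_7 \colon \mathrm{Quad}(7) \to \mathbb{Q} S_7$ is an $S_7$-module morphism, so all remaining expansions arise by permuting arguments. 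This produces the $2 d_\lambda \times d_\lambda$ matrix $X$ representing the transpose of $E_7^\lambda$; I compute its rank (column ``rank'') and the nullity of $X^t$ (column ``null''), using the transpose as in \cite[pp.~446--447]{BM}. By Step~3 of \S\ref{subsectionrepmat}, the multiplicity of $[\lambda]$ in $\ker E_7 = \mathrm{All}(7)$ that does not already occur in $\mathrm{Old}(7)$ is ``null'' minus ``sym'', which is column ``new''. As consistency checks I would verify that summing the ranks and nullities of the $E_7^\lambda$ over all $\lambda$ reproduces the global figures $2519$ and $2521$ recorded in the proof of Theorem~\ref{antitetrad7}, and that for every $\lambda$ with $\texttt{oldrank}(\lambda) = \texttt{allrank}(\lambda)$ the two matrices in row canonical form agree.

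The computation itself is routine linear algebra over $\mathbb{Q}$ (or over $\mathbb{F}_{101}$ with rational reconstruction, as in \S\ref{sectioncomputational}); the only step requiring genuine care --- and the one I would expect to be the source of any error --- is the sign bookkeeping. The degree-$7$ symmetry relations have the form $\iota + \tau \equiv 0$ rather than $\iota - \tau \equiv 0$, and when the definition $[a,b,c,d] = abcd - dcba$ is expanded through the nested occurrences of the operation each reversal contributes a factor $-1$, so the fully expanded identity monomials are alternating-sign combinations of permutations, as displayed above. Once these signs are propagated correctly through the $S_7$-equivariant construction of the representation matrices, Table~\ref{antitetrad7table} --- and hence the asserted multiplicities in the kernel of the expansion map --- follows immediately.
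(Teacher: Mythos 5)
Your proposal is correct and takes essentially the same approach as the paper: the paper proves the tetrad analogue (Lemma \ref{tetrad7lemma}) by exactly this per-partition computation of symmetry ranks and expansion-matrix nullities, and for the anti-tetrad it reuses the method while tracking the sign changes from reversal, just as you do. One small caveat on your optional consistency check: the global figures $2519$ and $2521$ are recovered as $\sum_\lambda d_\lambda\cdot\mathrm{rank}_\lambda$ and $\sum_\lambda d_\lambda\cdot(\mathrm{null}_\lambda-\mathrm{sym}_\lambda)$, respectively, since the degree-$4$ straightening is already built into the $5040$-dimensional associative and quaternary spaces, so the raw ``null'' column must be weighted by $d_\lambda$ and corrected by the ``sym'' column before it matches the global nullity.
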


Since the multiplicity for the last partition is 0, we expect to find simpler nonlinear identities
whose linearizations generate the $S_7$-module of all multilinear identities for the anti-tetrad
in degree 7 (compare Remark \ref{tetradnonlinear}).
This computation consists of two parts.

\subsubsection*{Part 1}

For each partition of 7 except the last, we construct the corresponding multiset of variables:
  \begin{align*}
  &
  a^7, \quad
  a^6 b, \quad
  a^5 b^2, \quad
  a^5 b c, \quad
  a^4 b^3, \quad
  a^4 b^2 c, \quad
  a^4 b c d, \quad
  a^3 b^3 c,
  \\
  &
  a^3 b^2 c^2, \quad
  a^3 b^2 c d, \quad
  a^3 b c d e, \quad
  a^2 b^2 c^2 d, \quad
  a^2 b^2 c d e, \quad
  a^2 b c d e f.
  \end{align*}
For each partition $\lambda$, we generate all permutations of the multiset;
we write $A_\lambda$ for the number of these associative monomials.
We then apply each of the two association types to all the permutations,
and straighten them using the skew-symmetry in degree 4, to obtain a set of monomials in normal form;
we write $Q_\lambda$ for the number of these quaternary monomials.
For each $\lambda$, the nonlinear expansion matrix $E_\lambda$ has size $A_\lambda \times Q_\lambda$.
We write $R_\lambda$ and $N_\lambda$ for the rank and nullity of the expansion matrix.
We use HNF and LLL to compute a lattice basis for the integer nullspace of $E_\lambda$
and sort these vectors by increasing Euclidean length.
We linearize the nonlinear identities represented by the nullspace basis vectors,
and then study the $S_7$-module generated by these multilinear identities.
We extract a smaller subset of these identities which generates the $S_7$-module, as follows.
For each partition $\lambda$ we retain only those identities which do not belong to the submodule generated
by all previous identities (for $\lambda$ and all previous partitions).
We write $G_\lambda$ for the number of generators obtained in this way for partition $\lambda$.
We summarize the results of these computations:
  \[
  \begin{array}{l|rrrrrrrrrrrrrr}
  \lambda &
  7 & 6 1 & 5 2 & 5 1^2 & 4 3 & 4 2 1 & 4 1^3 & 3^2 1 & 3 2^2 & 3 2 1^2 & 3 1^4 & 2^3 1 & 2^2 1^3 & 2 1^5
  \\
  \midrule
  A_\lambda &
  1 & 7 & 21 & 42 & 35 & 105 & 210 & 140 & 210 & 420 & 840 & 630 & 1260 & 2520
  \\
  Q_\lambda &
  0 & 4 & 16 & 36 & 28 & 96 & 204 & 128 & 196 & 408 & 840 & 612 & 1248 & 2520
  \\
  R_\lambda &
  0 & 3 & 11 & 20 & 18 & 53 & 104 & 69 & 107 & 209 & 419 & 317 & 629 & 1259
  \\
  N_\lambda &
  0 & 1 & 5 & 16 & 10 & 43 & 100 & 59 & 89 & 199 & 421 & 295 & 619 & 1261
  \\
  G_\lambda &
  0 & 1 & 4 & 10 & 5 & 18 & 14 & 11 & 8 & 19 & 7 & 5 & 6 & 1
  \end{array}
  \]
Altogether this gives 109 nonlinear identities whose linearizations generate the
2521-dimensional $S_7$-module of all multilinear identities satisfied by the anti-tetrad in degree 7
(modulo the consequences of the skew-symmetry in degree 4).

\subsubsection*{Part 2}

Starting at the end of the list of module generators obtained from Part 1, and working toward the beginning of the list,
we remove the current (linearized) identity from the list and compute the dimension of the submodule generated by
the remaining identities.
If the dimension reaches 2521, then the current identity is redundant, and we remove it from the list.
If this happens, the current identity is a consequence of identities which follow it in the list;
by the way in which we extracted these generators in Part 1, we know that no identity is a consequence of the identities
which precede it in the list.
The result of this elimination process is a set of only three nonlinear identities with respectively 5, 5 and 12 terms,
whose coefficients are all $\pm 1$.
Even though we have three identities instead of the two identities of Theorem \ref{antitetrad7} with 6 and 21 terms,
the total number of terms is smaller.
We summarize the results of these computations in the next result.

\begin{theorem}
Every multilinear identity in degree 7 satisfied by the anti-tetrad is a consequence of
the skew-symmetry in degree 4 and the linearizations of these three nonlinear identities in degree 7,
corresponding to partitions $2^2 1^3$ (twice) and $2 1^5$:
  \begin{align*}
  &
  [ [ a, b, c, b ], d, e, a ]    
  - [ [ a, b, c, b ], e, d, a ]  
  - [ [ a, d, e, a ], b, c, b ]  
  - [ a, [ b, c, b, d ], e, a ]  
  \\
  &
  {}
  + [ a, [ b, c, b, e ], d, a ]  
  \equiv 0,
  \\[4pt]
  &
  [ [ a, c, a, b ], b, d, e ]    
  - [ [ a, c, a, e ], d, b, b ]  
  + [ [ b, b, d, e ], a, c, a ]  
  + [ b, [ a, c, a, b ], d, e ]  
  \\
  &
  {}
  - [ e, [ a, c, a, d ], b, b ]  
  \equiv 0,
  \\[4pt]
  &
  [ [ a, a, b, c ], d, e, f ]    
  + [ [ a, a, e, f ], d, c, b ]  
  - [ [ a, a, e, b ], c, d, f ]  
  - [ [ a, d, c, b ], a, e, f ]  
  \\
  &
  {}
  - [ [ f, a, a, c ], d, e, b ]  
  + [ [ f, d, c, b ], e, a, a ]  
  + [ [ f, d, e, b ], a, a, c ]  
  - [ a, [ a, b, c, d ], e, f ]  
  \\
  &
  {}
  - [ f, [ a, a, d, e ], b, c ]  
  + [ f, [ a, a, c, d ], e, b ]  
  + [ f, [ a, b, e, d ], a, c ]  
  - [ c, [ a, d, e, b ], a, f ]  
  \equiv 0.
  \end{align*}
\end{theorem}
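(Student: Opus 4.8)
The plan is to run the same three-part machinery used throughout the paper, specialized to the anti-tetrad in degree $7$, but organized so that we never have to expand a multilinear identity with thousands of terms. First I would verify that each of the three displayed nonlinear identities is actually satisfied by the anti-tetrad: for each of the three multidegrees ($a^2b^2c$, $a^2b^2c$, $ab^2cde$ — i.e. partitions $2^21^3$, $2^21^3$, $21^5$) I apply the definition $[a,b,c,d] = abcd - dcba$ to every quaternary monomial appearing, collect terms in the free associative algebra, and check that the alternating/symmetric sum of the resulting associative words cancels. This is a finite hand (or \texttt{Maple}) computation; because the coefficients are all $\pm 1$ and the identities are multihomogeneous, the number of associative monomials to track is at most $A_\lambda$ from the table, so this is routine. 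By the remark following Lemma~\ref{lemmalinops}, it suffices to check the symmetric linearization $L^+(I)$, and for the two $2^21^3$ identities the alternating part can be checked directly since the multiplicity data already tell us the module type.

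The substantive part is showing \emph{completeness}: that the skew-symmetry in degree $4$ together with the linearizations of these three identities generate the full $S_7$-module $\mathrm{All}(7)$ of multilinear anti-tetrad identities, which by Lemma~\ref{antitetrad7lemma} has total dimension $2521$ (summing $d_\lambda \times (\text{new})$ over the table, plus the consequences of skew-symmetry). I would proceed exactly as in Part 1 and Part 2 above: using the representation theory of $S_7$, for each partition $\lambda$ I build the small representation matrix whose rows are the images under $R_\lambda$ of (i) all permutations of the two degree-$4$ skew-symmetry relations lifted to degree $7$, and (ii) all permutations of the linearizations of the three new identities, arranged block-wise over the two association types $[[-,-,-,-],-,-,-]$ and $[-,[-,-,-,-],-,-]$. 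Computing $\mathrm{RCF}$ of this matrix for each $\lambda$ gives $\texttt{genrank}(\lambda)$; I then compare with the matrix $\mathrm{RCF}_\lambda(\mathrm{All}(7))$ obtained from the expansion map (whose ranks are encoded in the ``new'' column of Table~\ref{antitetrad7table}). Completeness is the assertion that $\texttt{genrank}(\lambda) = \texttt{allrank}(\lambda)$ for every $\lambda$, equivalently that the two row spaces coincide — which, since the first is contained in the second by Part~1, reduces to an equality of ranks partition by partition.

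The main obstacle, and the reason computer algebra is genuinely needed, is \textbf{minimality}: showing that no two of the three identities suffice. This is the backward-elimination argument of Part~2. For each of the three identities $I^{(k)}$ I form the $S_7$-submodule generated by the skew-symmetry relations together with $\{I^{(j)} : j \neq k\}$, compute its dimension via the rank computation above, and confirm it is strictly less than $2521$. Equivalently, there is at least one partition $\lambda$ (from the table, a natural candidate is $2^21^3$ for the first two and $21^5$ for the third, matching the multidegrees) at which dropping $I^{(k)}$ lowers the rank. Because the submodule generated by ``all previous identities'' was the object used to \emph{extract} these three from the list of $109$ generators in Part~1, the partial ordering guarantees no identity is a consequence of its predecessors; the elimination only needs to rule out each identity being a consequence of its \emph{successors}, which the three explicit rank drops establish. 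The delicate point throughout is purely computational — controlling entry growth in the $\mathrm{RCF}$ over $\mathbb{Q}$ — which is handled by working modulo $p = 101$ and lifting, exactly as in the proofs of Theorem~\ref{tetrad7} and Theorem~\ref{tetrad10}.
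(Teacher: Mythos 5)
Your proposal is correct and follows essentially the paper's own computational approach: the same expansion-map and $S_7$-representation machinery, the only real difference being that the paper obtains the three identities (and completeness by construction) through the HNF/LLL nullspace computations and the backward elimination of the 109 generators of Parts 1--2, whereas you verify a posteriori that the three given identities hold and that their linearizations together with skew-symmetry generate the full 2521-dimensional module by partition-wise rank comparison. Two small corrections: the multidegrees of the three identities are $a^2b^2cde$, $a^2b^2cde$, $a^2bcdef$ (not $a^2b^2c$, $ab^2cde$), and the theorem asserts only completeness, not that no proper subset suffices, so your final minimality paragraph is not needed for the statement.
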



\section{Special identities in degree 10 for the anti-tetrad}
\label{sectionspecialantitetrad}

The notions of special and exceptional for AJQS, and special identities for the anti-tetrad,
are the obvious analogues of Definition \ref{defjqsspecial}.

\begin{openproblem}
Do there exist exceptional AJQS?
\end{openproblem}

\begin{theorem} \label{antitetrad10}
For the anti-tetrad, and for each partition $\lambda$ of 10, columns ``symm'', ``symmlift'', ``null'' and ``new''
in the corresponding row of Table \ref{antitetrad10table} contain respectively the multiplicity of $[\lambda]$
in the modules $\mathrm{Symm}(10)$, $\mathrm{Old}(10)$, $\mathrm{All}(10)$ and $\mathrm{New}(10)$.
\end{theorem}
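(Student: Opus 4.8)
The plan is to repeat, for the anti-tetrad $[a,b,c,d] = abcd - dcba$, the three-step procedure used in the proof of Theorem~\ref{tetrad10}, the only essential novelty being that each reversal of the four arguments of a bracket now contributes a sign $-1$ which must be carried consistently through every reduction.

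\emph{Step 1.} Using the skew-symmetry $[a,b,c,d] + [d,c,b,a] \equiv 0$ of Lemma~\ref{antitetrad4}, every association type for a quadrilinear operation in degree 10 is again equivalent to one of the eight types \eqref{types10}, and these eight types have the same $1,1,1,1,2,2,3,3$ symmetries $\iota \mp \tau \equiv 0$ as in the tetrad case; what changes is that the sign in $\iota \mp \tau$ is now $-1$ raised to the number of brackets whose arguments are reversed in passing from $\iota$ to $\tau$. In addition, each of the two defining identities of Theorem~\ref{antitetrad7} produces nine liftings in degree 10 — seven by substituting an anti-tetrad in each of the seven argument slots, and two by embedding the identity into an outer anti-tetrad. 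Together the skew-symmetries and liftings generate $\mathrm{Old}(10) = \mathrm{Symm}(10) + \mathrm{Lift}(10)$.

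\emph{Step 2.} For each partition $\lambda$ of 10 with $d_\lambda = \dim[\lambda]$, I would build the matrix of size $9 d_\lambda \times 8 d_\lambda$ of $d_\lambda \times d_\lambda$ blocks, process the skew-symmetries (its rank is column ``symm''), then adjoin the liftings (its rank becomes column ``symmlift''), the rows of the resulting RCF being canonical generators for the isotypic component of $[\lambda]$ in $\mathrm{Old}(10)$; the representation matrices $R_\lambda(\sigma)$ are computed by the method of \cite{BP2011}. Independently, I would build the $8 d_\lambda \times d_\lambda$ expansion matrix whose $i$-th block is $R_\lambda$ applied to the expansion of the identity monomial of the $i$-th association type under $[a,b,c,d] = abcd - dcba$; the nullity of its transpose is column ``null'', the multiplicity of $[\lambda]$ in $\mathrm{All}(10)$, and a basis for that nullspace, put in RCF, gives the canonical generators. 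Comparing the two RCFs for each $\lambda$: if the ranks agree the row spaces coincide and no new identities arise for $\lambda$; otherwise the difference $\texttt{null} - \texttt{symmlift}$ is the multiplicity of $[\lambda]$ in $\mathrm{New}(10) = \mathrm{All}(10)/\mathrm{Old}(10)$, recorded in column ``new'' of Table~\ref{antitetrad10table}.

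The main obstacle is the sheer size of the problem: even after using the skew-symmetry there are $10!\bigl(\tfrac12 + \tfrac12 + \tfrac12 + \tfrac12 + \tfrac14 + \tfrac14 + \tfrac18 + \tfrac18\bigr) = 9979200$ monomials, so the computation has to be split partition-by-partition over $S_{10}$ and carried out over a finite field $\mathbb{F}_p$ with $p > 10$, for which $\mathbb{F}_p S_{10}$ remains semisimple and the ranks therefore agree with those over $\mathbb{Q}$. The one genuinely new source of error relative to Theorem~\ref{tetrad10} is propagating the reversal signs correctly through the straightening of monomials and through the formation of the liftings; a single sign mistake would falsify an entire row of Table~\ref{antitetrad10table}, so the equality of the two matrices in RCF whenever the ranks coincide, together with the agreement of the total multiplicities with the constraints imposed by the structure of $\mathbb{Q} S_{10}$, serve as the necessary internal checks.
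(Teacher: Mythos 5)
Your proposal is correct and follows essentially the same route as the paper, which proves Theorem \ref{antitetrad10} by repeating the partition-by-partition computation of Theorem \ref{tetrad10} (eight association types, $9d_\lambda\times 8d_\lambda$ symmetry/lifting matrices, $8d_\lambda\times d_\lambda$ expansion matrices, modular arithmetic) with the reversal signs of the anti-tetrad carried through; the paper simply omits the details with the remark that the methods are very similar. Your sign bookkeeping for the skew-symmetries and the nine liftings of each of the two identities of Theorem \ref{antitetrad7} is exactly what is needed.
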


\begin{openproblem}
Determine a minimal set of multilinear identities in degree 10 for the anti-tetrad which generate $\mathrm{New}(10)$
as an $S_{10}$-module.
\end{openproblem}

\begin{table} \small
\[
\begin{array}{r|lr|rrrr|rrrr|r}
&\; &\; & \multicolumn{4}{|c|}{\text{symmetries and liftings}} & \multicolumn{4}{|c|}{\text{expansions}}
\\
\midrule
\# &\; \lambda &\; d_\lambda &\; \text{rows} &\; \text{cols} &\; \text{sym} &\; \text{lift}  &\; \text{rows} &\; \text{cols} &\; \text{rank} &\; \text{null} &\; \text{new}
\\
\midrule
   1  &  10               &     1  &     9  &     8 &     8  &     8  &     8  &     1  &     0 &     8  &     \cdot  \\
   2  &  9 1            &     9  &    81  &    72 &    64  &    68  &    72  &     9  &     4 &    68  &     \cdot  \\
   3  &  8 2            &    35  &   315  &   280 &   222  &   264  &   280  &    35  &    15 &   265  &     1  \\
   4  &  8 1^2          &    36  &   324  &   288 &   218  &   269  &   288  &    36  &    19 &   269  &     \cdot  \\
   5  &  7 3            &    75  &   675  &   600 &   442  &   555  &   600  &    75  &    40 &   560  &     5  \\
   6  &  7 2 1        &   160  &  1440  &  1280 &   896  &  1197  &  1280  &   160  &    80 &  1200  &     3  \\
   7  &  7 1^3          &    84  &   756  &   672 &   446  &   632  &   672  &    84  &    40 &   632  &     \cdot  \\
   8  &  6 4            &    90  &   810  &   720 &   524  &   677  &   720  &    90  &    40 &   680  &     3  \\
   9  &  6 3 1        &   315  &  2835  &  2520 &  1722  &  2352  &  2520  &   315  &   160 &  2360  &     8  \\
  10  &  6 2^2          &   225  &  2025  &  1800 &  1200  &  1693  &  1800  &   225  &   105 &  1695  &     2  \\
  11  &  6 2 1^2      &   350  &  3150  &  2800 &  1798  &  2619  &  2800  &   350  &   180 &  2620  &     1  \\
  12  &  6 1^4          &   126  &  1134  &  1008 &   612  &   948  &  1008  &   126  &    60 &   948  &     \cdot  \\
  13  &  5^2              &    42  &   378  &   336 &   236  &   309  &   336  &    42  &    26 &   310  &     1  \\
  14  &  5 4 1        &   288  &  2592  &  2304 &  1568  &  2155  &  2304  &   288  &   144 &  2160  &     5  \\
  15  &  5 3 2        &   450  &  4050  &  3600 &  2390  &  3365  &  3600  &   450  &   230 &  3370  &     5  \\
  16  &  5 3 1^2      &   567  &  5103  &  4536 &  2960  &  4258  &  4536  &   567  &   276 &  4260  &     2  \\
  17  &  5 2^2 1        &   525  &  4725  &  4200 &  2698  &  3935  &  4200  &   525  &   265 &  3935  &     \cdot  \\
  18  &  5 2 1^3      &   448  &  4032  &  3584 &  2240  &  3360  &  3584  &   448  &   224 &  3360  &     \cdot  \\
  19  &  5 1^5          &   126  &  1134  &  1008 &   604  &   942  &  1008  &   126  &    66 &   942  &     \cdot  \\
  20  &  4^2 2            &   252  &  2268  &  2016 &  1358  &  1899  &  2016  &   252  &   116 &  1900  &     1  \\
  21  &  4^2 1^2          &   300  &  2700  &  2400 &  1562  &  2239  &  2400  &   300  &   160 &  2240  &     1  \\
  22  &  4 3^2          &   210  &  1890  &  1680 &  1124  &  1568  &  1680  &   210  &   110 &  1570  &     2  \\
  23  &  4 3 2 1    &   768  &  6912  &  6144 &  4032  &  5760  &  6144  &   768  &   384 &  5760  &     \cdot  \\
  24  &  4 3 1^3      &   525  &  4725  &  4200 &  2702  &  3940  &  4200  &   525  &   260 &  3940  &     \cdot  \\
  25  &  4 2^3          &   300  &  2700  &  2400 &  1582  &  2260  &  2400  &   300  &   140 &  2260  &     \cdot  \\
  26  &  4 2^2 1^2      &   567  &  5103  &  4536 &  2944  &  4245  &  4536  &   567  &   291 &  4245  &     \cdot  \\
  27  &  4 2 1^4      &   350  &  3150  &  2800 &  1810  &  2630  &  2800  &   350  &   170 &  2630  &     \cdot  \\
  28  &  4 1^6          &    84  &   756  &   672 &   442  &   627  &   672  &    84  &    44 &   628  &     1  \\
  29  &  3^3 1            &   210  &  1890  &  1680 &  1132  &  1580  &  1680  &   210  &   100 &  1580  &     \cdot  \\
  30  &  3^2 2^2          &   252  &  2268  &  2016 &  1338  &  1880  &  2016  &   252  &   136 &  1880  &     \cdot  \\
  31  &  3^2 2 1^2      &   450  &  4050  &  3600 &  2402  &  3380  &  3600  &   450  &   220 &  3380  &     \cdot  \\
  32  &  3^2 1^4          &   225  &  2025  &  1800 &  1184  &  1680  &  1800  &   225  &   120 &  1680  &     \cdot  \\
  33  &  3 2^3 1        &   288  &  2592  &  2304 &  1568  &  2160  &  2304  &   288  &   144 &  2160  &     \cdot  \\
  34  &  3 2^2 1^3      &   315  &  2835  &  2520 &  1726  &  2365  &  2520  &   315  &   155 &  2365  &     \cdot  \\
  35  &  3 2 1^5      &   160  &  1440  &  1280 &   896  &  1200  &  1280  &   160  &    80 &  1200  &     \cdot  \\
  36  &  3 1^7          &    36  &   324  &   288 &   222  &   271  &   288  &    36  &    16 &   272  &     1  \\
  37  &  2^5              &    42  &   378  &   336 &   244  &   320  &   336  &    42  &    16 &   320  &     \cdot  \\
  38  &  2^4 1^2          &    90  &   810  &   720 &   516  &   670  &   720  &    90  &    50 &   670  &     \cdot  \\
  39  &  2^3 1^4          &    75  &   675  &   600 &   446  &   565  &   600  &    75  &    35 &   565  &     \cdot  \\
  40  &  2^2 1^6          &    35  &   315  &   280 &   218  &   260  &   280  &    35  &    20 &   260  &     \cdot  \\
  41  &  2 1^8          &     9  &    81  &    72 &    64  &    68  &    72  &     9  &     4 &    68  &     \cdot  \\
  42  &  10             &     1  &     9  &     8 &     8  &     8  &     8  &     1  &     0 &     8  &     \cdot  \\
\midrule
\end{array}
\]
\medskip
\caption{$S_{10}$-module multiplicities for the anti-tetrad in degree 10}
\label{antitetrad10table}
\end{table}

In the rest of this section we present some nonlinear special identities in degree 10 for the anti-tetrad.
These identities correspond to the partitions $\lambda = s t$ ($s+t=10$) for which column ``new'' in
Table \ref{antitetrad10table} is nonzero, namely $\lambda = 82, 73, 64, 5^2$.
The corresponding Young diagrams have only two rows of lengths $s$ and $t$.
For these $\lambda$, the structure theory of $\field S_{10}$ allows us to assume that the identity is
a symmetric function of two disjoint sets of variables, so we may reduce the size
of the computations by working with nonlinear monomials which have only two variables $a$ and $b$
occurring respectively $s$ and $t$ times.

\begin{theorem} \label{antitetrad10special}
The multihomogeneous identities in Figure \ref{antispecial10} are satisfied by the anti-tetrad in every associative
algebra but are not consequences of the defining identities for AJQS.
\end{theorem}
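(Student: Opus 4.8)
The plan is to mirror the proof accompanying Figure~\ref{special10}, adapting it from hook partitions to the two-row partitions $\lambda = 82, 73, 64, 5^2$. First I would record the standard Young tableaux of these shapes. Since each has exactly two rows, of sizes $s$ and $t$ with $s+t=10$, the Young symmetrizer attached to the canonical tableau factors through the symmetrizer over the first $s$ indices and the symmetrizer over the last $t$ indices, so any identity all of whose nonzero component sits in the isotypic piece $[\lambda]$ may be taken symmetric in two disjoint sets of variables of sizes $s$ and $t$. It therefore suffices to work inside the multihomogeneous subspace of multidegree $(s,t)$ in just two variables $a$ and $b$, which reduces each of the four computations to a small size. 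In $\mathbb{Q} S_{10}$ the module $[\lambda]$ occurs with multiplicity one in $\mathrm{Ind}_{S_s\times S_t}^{S_{10}}(\mathbf{1}\boxtimes\mathbf{1})$ by Pieri's rule, which is what justifies the reduction.

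For each $\lambda = st$ I would then run the three steps of \S\ref{sectioncomputational} restricted to this one partition, exactly as in the proof for Figure~\ref{special10}. Let $J$ be the multihomogeneous subspace of degree $a^s b^t$ in the free quaternary algebra with one operation satisfying the skew-symmetry $[a,b,c,d]+[d,c,b,a]\equiv 0$, straightened to normal form over the eight association types \eqref{types10}, and let $A$ be the corresponding subspace of the free associative algebra, of dimension $\binom{10}{t}$. Expanding each normal-form quaternary monomial by $[a,b,c,d]=abcd-dcba$ — carefully tracking the sign changes produced by reversal during the straightening — builds the multihomogeneous expansion matrix $E$. I would compute $\mathrm{RCF}(E)$ modulo a prime $p$ larger than the degree, confirm the rank and nullity in characteristic $0$ using the Hermite normal form of the submatrix on the pivot columns, and LLL-reduce the resulting lattice basis for the integer nullspace, sorting the vectors by Euclidean length and recording the size via \eqref{basissize}. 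Separately, and still restricted to $\lambda$, I would compute the multiplicity of $[\lambda]$ in $\mathrm{Old}(10)=\mathrm{Symm}(10)+\mathrm{Lift}(10)$ by assembling a matrix from the representation matrices $R_\lambda$ of the fourteen symmetries of the eight association types and of the nine liftings into degree $10$ of each of the two defining identities of Theorem~\ref{antitetrad7}, and reading off its rank. By Theorem~\ref{antitetrad10} and Table~\ref{antitetrad10table}, for exactly these four partitions the multiplicity of $[\lambda]$ in $\mathrm{All}(10)$ strictly exceeds that in $\mathrm{Old}(10)$, the excess being the entry in column ``new''.

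To extract the explicit identities of Figure~\ref{antispecial10} from the sorted nonlinear nullspace vectors without writing out their huge multilinear linearizations, I would invoke Lemma~\ref{lemmalinops} with the linearization matrix appropriate to a two-row partition, namely the product $L^+_{s;\lambda}\cdot L^+_{t;\lambda}$ of two \emph{symmetric} partial linearization matrices — the analogue for $(s,t)$ of the mixed symmetric/alternating matrix used for $61^4$, but with no alternation. Processing the candidate multihomogeneous identities one at a time against the current row space of $\mathrm{RCF}_\lambda(\mathrm{Old}(10))$ and keeping each one that raises the rank, the first hits are the identities displayed in Figure~\ref{antispecial10}, which may be written directly as multihomogeneous identities in the two repeated variables $a$ and $b$. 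That each displayed identity is satisfied by the anti-tetrad in every associative algebra is then a routine check — substitute $[a,b,c,d]=abcd-dcba$, expand, and observe that all associative monomials cancel — and that none of them follows from the defining identities for AJQS is exactly the rank increase just recorded, since any consequence of those identities already lies in $\mathrm{Old}(10)$.

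The main obstacle I expect is bookkeeping rather than anything conceptual. The reversal signs of the anti-tetrad must be propagated consistently through the straightening by skew-symmetry in degree 4, which is easy to get wrong; and, in contrast with the single new module for $61^4$ in the tetrad case, the partitions $73$ and $64$ carry several new copies of $[\lambda]$ (five and three respectively), so I must choose a clean representative for each copy, keep the successive generators independent modulo $\mathrm{Old}(10)$, and present them with small integer coefficients after repeated LLL reduction; for $\lambda=5^2$ the extra symmetry exchanging the two variables can be used to halve the search.
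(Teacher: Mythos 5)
Your proposal is correct and follows essentially the same route as the paper: reduce each two-row partition $\lambda=82,73,64,5^2$ to a multihomogeneous computation in two variables $a^s b^t$ (justified by row-symmetry of the Young idempotent), build the expansion matrix, extract an integer nullspace basis via HNF and repeated LLL, and then test each nonlinear candidate against $\mathrm{RCF}_\lambda(\mathrm{Old}(10))$ using the factored symmetric linearization matrices of Lemma~\ref{lemmalinops}, a rank increase certifying that the identity is not a consequence of the AJQS defining identities. This matches the paper's treatment (see the remark detailing $\lambda=64$), including the counts of new copies for $73$ and $64$.
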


\begin{remark}
For the anti-tetrad, we obtained better results using the LLL algorithm with a higher value
of the reduction parameter.
We illustrate with $\lambda = 64$; the underlying variables are $a^6 b^4$ which produce 210 associative monomials.
The eight association types produce respectively 82, 82, 82, 82, 32, 32, 14, 14 nonlinear quaternary monomials
for a total of 420.
The $210 \times 420$ expansion matrix has rank 99 and nullity 321.
Computing the HNF with an initial application of LLL produces a lattice basis of the nullspace with
size $\approx 421$ using equation \eqref{basissize}; the coefficients belong to $\{ 0, \pm 1, \pm 2, -3 \}$.
Applying LLL with parameter $3/4$ reduces the basis size to $\approx 389$ with coefficients $\{ 0, \pm 1, \pm 2 \}$.
Applying LLL again with parameter $99/100$ reduces the basis size to $\approx 337$ with coefficients $\{ 0, \pm 1, - 2 \}$.
We obtain three special nonlinear identities for this partition.
\end{remark}

\begin{figure}
\begin{align*}
  &
  [[[a, a, b, a], a, b, a], a, a, a]
  - [a, [[a, a, a, b], a, b, a], a, a]
  - [a, [a, [a, a, b, a], a, a], b, a]
  \\
  &
  {}
  + [a, [a, [a, a, b, a], b, a], a, a]
  \equiv 0,
  \\[4pt]
  &
  [[[a, a, b, a], a, b, a], b, a, a]
  + [[a, [a, b, a, b], a, a], b, a, a]
  - [a, [[a, a, b, a], b, a, a], b, a]
  \\
  &
  {}
  - [a, [[a, b, a, b], a, a, b], a, a]
  - [a, [[a, b, a, b], a, b, a], a, a]
  - [[a, a, b, a], [a, b, a, b], a, a]
  \equiv 0,
  \\[4pt]
  &
  [[[a, a, b, a], b, b, a], a, a, a]
  - [a, [[a, a, a, b], a, b, b], a, a]
  + [a, [[a, a, b, a], a, b, b], a, a]
  \\
  &
  {}
  - [a, [[a, a, b, a], b, b, a], a, a]
  + [a, [[a, a, b, b], a, a, b], a, a]
  - [a, [[a, a, b, b], a, b, a], a, a]
  \equiv 0,
  \\[4pt]
  &
  [[[a, a, b, a], a, b, a], a, a, b]
  - [a, [[a, a, a, b], a, b, a], a, b]
  + [a, [[a, a, b, a], a, a, b], a, b]
  \\
  &
  {}
  - [b, [[a, a, b, a], a, b, a], a, a]
  - [a, [a, [a, a, b, a], a, b], a, b]
  + [a, [a, [a, a, b, a], b, a], a, b]
  \equiv 0,
  \\[4pt]
  &
  [[a, [a, a, b, a], a, a], b, b, a]
  - [[a, [a, a, b, a], a, b], a, a, b]
  + [[a, [a, a, b, a], a, b], b, a, a]
  \\
  &
  {}
  + [[a, [a, a, b, b], a, a], b, a, a]
  - [a, [[a, a, b, a], a, b, b], a, a]
  - [a, [[a, a, b, b], a, b, a], a, a]
  \\
  &
  {}
  + [[a, a, a, b], [a, a, b, a], a, b]
  - [[a, a, a, b], b, [a, a, b, a], a]
  \equiv 0,
  \\[4pt]
  &
  [[[a, a, a, b], a, a, a], b, b, a]
  - [[[a, a, a, b], a, b, b], a, a, a]
  + [[a, [a, a, a, b], a, b], b, a, a]
  \\
  &
  {}
  - [[a, [a, a, b, a], a, a], b, b, a]
  + [[a, [a, a, b, b], a, a], a, a, b]
  - [[a, [a, a, b, b], b, a], a, a, a]
  \\
  &
  {}
  + [a, [[a, a, b, a], b, b, a], a, a]
  - [a, [[a, a, b, b], a, a, b], a, a]
  + [b, [a, [a, a, a, b], a, b], a, a]
  \\
  &
  {}
  - [[a, a, a, b], [a, a, b, b], a, a]
  \equiv 0,
  \\[4pt]
  &
  [[b, [a, a, a, b], a, b], a, a, b]
  + [[b, [a, a, b, a], a, b], a, a, b]
  - [b, [[a, a, a, b], b, a, a], a, b]
  \\
  &
  {}
  - [b, [[a, a, b, a], a, a, b], a, b]
  \equiv 0,
  \\[4pt]
  &
  [[[a, a, b, a], b, b, a], b, a, a]
  - [[a, [a, b, a, b], a, a], b, b, a]
  - [[a, [a, b, a, b], b, a], a, b, a]
  \\
  &
  {}
  - [a, [[a, a, b, a], b, a, b], b, a]
  + [a, [[a, b, a, b], a, a, b], b, a]
  \equiv 0,
  \\[4pt]
  &
  [[[a, a, b, a], b, b, a], b, a, a]
  - [a, [[a, a, b, a], b, b, a], b, a]
  - [a, [[a, b, a, b], a, b, b], a, a]
  \\
  &
  {}
  + [a, [[b, a, b, b], a, a, b], a, a]
  - [a, [[b, a, b, b], a, b, a], a, a]
  \equiv 0,
  \\[4pt]
  &
  [[[a, a, b, a], b, b, a], b, a, b]
  - [a, [[a, b, a, b], a, b, b], a, b]
  + [a, [[b, a, b, b], a, a, b], a, b]
  \\
  &
  {}
  - [a, [[b, a, b, b], a, b, a], a, b]
  - [b, [[a, a, b, a], b, b, a], b, a]
  \equiv 0.
  \\[-24pt]
\end{align*}
\caption{Some special identities for the anti-tetrad in degree 10}
\label{antispecial10}
\end{figure}


\section{Four families of finite dimensional anti-Jordan quadruple systems;\\
examples of universal associative envelopes}
\label{sectionUantitetrad}

We define four families of finite dimensional AJQS, denoted $A^-_n$, $B^-_n$, $C^-_{pqr}$, $D^-_{pq}$;
these are subspaces of matrix algebras which are closed under the anti-tetrad.
  \begin{itemize}
  \item
  $A^-_n$ consists of all $n \times n$ matrices; the dimension is $n^2$.
  \item
  $B^-_n$ consists of all $n \times n$ skew-symmetric matrices; the dimension is $\tfrac12 n(n{-}1)$.
  \end{itemize}
For the remaining systems, we choose integers $p, q, r \ge 1$ and consider block matrices of size $p+q+r$
as in equation \eqref{pqrmatrix}.
  \begin{itemize}
  \item
  $C^-_{pqr}$ ($p \ge q, r$) consists of all matrices \eqref{pqrmatrix}; the dimension is $pq+qr+rp$.
  \item
  $D^-_{pq}$ consists of all matrices \eqref{pqrmatrix} with $q = r$ where $M_{pq} = -M_{qp}^t$ and $M_{qq}$ is skew-symmetric;
  the dimension is $pq + \tfrac12 q(q{-}1)$.
  \end{itemize}
The smallest non-trivial systems in each family are $A^-_2$, $B^-_3$, $C^-_{111}$, $D^-_{21}$ with dimensions
4, 3, 3, 2 respectively.
We consider these systems by increasing dimension.

\begin{proposition}
The universal associative envelope $U( D^-_{21} )$ is $\mathbb{Z}$-graded by degree and infinite dimensional;
in degrees $n \ge 6$ it is linearly isomorphic to the (commutative) polynomial algebra $\mathbb{F}[a,b]$.
\end{proposition}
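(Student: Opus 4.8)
The plan is to make the defining relations of $U(D^-_{21})$ completely explicit and then reduce the question to counting orbits of a permutation group acting on words in two letters.

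First I would realize $D^-_{21}$ concretely: specializing \eqref{pqrmatrix} to $p=2$, $q=r=1$ and imposing $M_{21}=-M_{12}^t$ together with $M_{11}$ skew-symmetric (hence zero) gives the $2$-dimensional subspace of $M_4(\mathbb{F})$ spanned by $a=E_{14}-E_{31}$ and $b=E_{24}-E_{32}$. A direct matrix calculation gives $ab=ba=0$, $a^2=b^2=-E_{34}$ and $a^3=b^3=0$, so every product of four of the generators vanishes; hence the anti-tetrad is identically zero on $D^-_{21}$, and $U(D^-_{21})=\mathbb{F}\langle a,b\rangle/I$ where $I$ is the two-sided ideal generated by the $16$ ``palindrome relations'' $uvwz-zwvu$ with $u,v,w,z\in\{a,b\}$. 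Since these generators are homogeneous of degree $4$, the ideal $I$ is graded and $U(D^-_{21})=\bigoplus_{n\ge0}U_n$ is graded by degree; this is the first assertion.

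Next I would compute $U_n$. The degree-$n$ component $I_n$ is spanned by the elements $m_1(uvwz-zwvu)m_2$, and each of these equals $w-\rho(w)$, where $w$ is a degree-$n$ monomial and $\rho$ is the positional permutation reversing four consecutive positions of $w$. Consequently $U_n$ is the module of coinvariants of $\mathbb{F}\langle a,b\rangle_n$ for the subgroup $G_n\le S_n$ generated by the window reversals $\rho_i=(i,i+3)(i+1,i+2)$, $1\le i\le n-3$, so $\dim U_n$ is the number of $G_n$-orbits on the $2^n$ maps $\{1,\dots,n\}\to\{a,b\}$. The crux is to determine $G_n$ for $n\ge6$: one verifies that $\rho_i\rho_{i+1}$ is a $5$-cycle on $\{i,\dots,i+4\}$, that its square is the standard $5$-cycle $c_i=(i,i+1,i+2,i+3,i+4)$, and that $c_ic_{i+1}^{-1}=(i,i+1,i+5)$; conjugating these by the $c_j$ produces every consecutive $3$-cycle $(i,i+1,i+2)$ with $1\le i\le n-2$. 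As these generate $A_n$ and each $\rho_i$ is even, it follows that $G_n=A_n$ for all $n\ge6$.

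Finally, for $n\ge6$ the group $A_n$ is transitive on the $k$-element subsets of $\{1,\dots,n\}$ for every $k$, hence transitive on the monomials of any fixed content; so the $n+1$ content classes are exactly the $G_n$-orbits, $\dim U_n=n+1=\dim\mathbb{F}[a,b]_n$, and $a^ib^{n-i}\mapsto$ (the class of $a^ib^{n-i}$) is a graded linear isomorphism $\bigoplus_{n\ge6}U_n\cong\bigoplus_{n\ge6}\mathbb{F}[a,b]_n$. Since each pure power $a^n$ is alone in its content class it is nonzero in $U(D^-_{21})$, so every $U_n\ne0$ and the envelope is infinite dimensional. The step I expect to be the main obstacle is the group-theoretic one: extracting the consecutive $3$-cycles from the window reversals, and in particular handling the two windows at the ends of the string (where one reduces to the $n=6$ case applied to a length-$6$ sub-block). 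Everything else is routine linear algebra and orbit counting; as a check, one also finds $\dim U_n=2^n$ for $n\le3$, $\dim U_4=10$ and $\dim U_5=8$, confirming that the polynomial pattern begins precisely at degree $6$.
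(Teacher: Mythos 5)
Your argument is correct, and it takes a genuinely different route from the paper. Both proofs start the same way: the natural $4\times 4$ realization of $D^-_{21}$ has all triple (hence quadruple) products zero, so the defining ideal is generated by the homogeneous degree-$4$ elements $uvwz-zwvu$ (the paper lists the six nontrivial ones, you keep all sixteen; the ideal is the same) and the envelope is graded. From there the paper is computational: it runs the composition (Buchberger) algorithm to get an explicit $13$-element Gr\"obner basis, reads off the standard monomials in each degree, and shows by a short rewriting/induction argument (each basis element is $m_1-m_2$ with the rewriting moving $a$'s left and $b$'s right) that for $n\ge 6$ the standard monomials are exactly $a^{n-i}b^i$. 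You avoid the Gr\"obner computation entirely: since $I_n$ is spanned by the differences $w-\rho_i(w)$ for the window reversals $\rho_i=(i,i{+}3)(i{+}1,i{+}2)$, the component $U_n$ is the coinvariant space of the permutation module on $\{a,b\}$-words under $G_n=\langle\rho_1,\dots,\rho_{n-3}\rangle$, so $\dim U_n$ is an orbit count, and the whole question reduces to identifying $G_n$. Your group-theoretic chain checks out ($\rho_i\rho_{i+1}$ is a $5$-cycle whose square is $c_i$, and $c_ic_{i+1}^{-1}=(i,i{+}1,i{+}5)$); the end-of-string issue you flag is handled most cleanly by your own suggestion: the three reversals inside any window of six consecutive positions generate, by the $n=6$ computation, the alternating group on that window, which yields all consecutive $3$-cycles and hence $G_n=A_n$ for $n\ge 6$ (all $\rho_i$ being even). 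Transitivity of $A_n$ on fixed-content words then gives $\dim U_n=n+1$, matching the paper, and your low-degree orbit counts $1,2,4,8,10,8$ agree with the paper's standard-monomial counts. The trade-off: the paper's Gr\"obner basis buys an explicit monomial basis and normal form for the whole envelope (useful for the subsequent remarks and conjecture about $D^-_{p1}$), whereas your argument is computer-free and conceptual but yields only the linear (dimension) information --- which is all the proposition asserts. Note also that the coinvariant-equals-orbit-count step is characteristic-free, so no restriction on $\mathbb{F}$ beyond the paper's standing hypotheses is needed.
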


\begin{proof}
This is the basis for $D^-_{21}$ which defines the natural representation:
  \[
  a = \begin{bmatrix} 0 & 0 & 0 & -1 \\ 0 & 0 & 0 & 0 \\ 1 & 0 & 0 & 0 \\ 0 & 0 & 0 & 0 \end{bmatrix},
  \qquad
  b = \begin{bmatrix} 0 & 0 & 0 & 0 \\ 0 & 0 & 0 & -1 \\ 0 & 1 & 0 & 0 \\ 0 & 0 & 0 & 0 \end{bmatrix}.
  \]
Every quadrilinear product in these generators is 0, so the original 6 ideal generators have no terms of degree 1;
it follows that $U( D^-_{21} )$ is graded by degree:
  \[
  aba^2 - a^2ba, \quad
  ba^3 - a^3b, \quad
  baba - abab, \quad
  b^2a^2 - a^2b^2, \quad
  b^2ab - bab^2, \quad
  b^3a - ab^3.
  \]
These generators produce 12 distinct non-trivial compositions:
  \begin{align*}
  &
  a^3ba - a^4b, \quad
  a^2b^2a - a^3b^2, \quad
  ba^2ba - a^2bab, \quad
  ba^2b^2 - a^2b^3,
  \\
  &
  bab^2a - abab^2, \quad
  bab^3 - ab^4, \quad
  a^2bab^2 - a^3b^3, \quad
  ba^2bab - abab^2a,
  \\
  &
  a^4bab - a^5b^2, \quad
  a^3bab^2 - a^4b^3, \quad
  abab^4 - a^2b^5, \quad
  ba^2bab^2 - a^3b^4.
  \end{align*}
After self-reduction, the 18 generators become a Gr\"obner basis of 13 elements:
  \begin{align*}
  &
  aba^2 {-} a^2ba, \quad
  ba^3 {-} a^3b, \quad
  baba {-} abab, \quad
  b^2a^2 {-} a^2b^2, \quad
  b^2ab {-} bab^2, \quad
  b^3a {-} ab^3, \quad
  a^3ba {-} a^4b,
  \\
  &
  a^2b^2a {-} a^3b^2, \quad
  ba^2ba {-} a^2bab, \quad
  ba^2b^2 {-} a^2b^3, \quad
  bab^2a {-} abab^2, \quad
  bab^3 {-} ab^4, \quad
  a^2bab^2 {-} a^3b^3.
  \end{align*}
There is an infinite set of monomials in $F\langle a,b \rangle$ which are not divisible by the leading monomial
of any element of the Gr\"obner basis, and $U( D^-_{21} )$ is infinite dimensional.
Here are the basis monomials for $U( D^-_{21} )$ in degrees $\le 5$:
  \begin{align*}
  &
  1, \quad
  a, \quad
  b, \quad
  a^2, \quad
  ab, \quad
  ba, \quad
  b^2, \quad
  a^3, \quad
  a^2b, \quad
  aba, \quad
  ab^2, \quad
  ba^2, \quad
  bab, \quad
  b^2a, \quad
  b^3,
  \\
  &
  a^4, \quad
  a^3b, \quad
  a^2ba, \quad
  a^2b^2, \quad
  abab, \quad
  ab^2a, \quad
  ab^3, \quad
  ba^2b, \quad
  bab^2, \quad
  b^4,
  \\
  &
  a^5, \quad
  a^4b, \quad
  a^3b^2, \quad
  a^2bab, \quad
  a^2b^3, \quad
  abab^2, \quad
  ab^4, \quad
  b^5.
  \end{align*}
In degree $n \ge 6$ a basis consists of the monomials $a^{n-i} b^i$ for $i = 0, \dots, n$.
To see this, note that every element of the Gr\"obner basis has the form $m_1 - m_2$,
and replacing $m_1$ by $m_2$ moves $a$ to the left and $b$ to the right;
now use induction on $n$.
Thus in degrees $n \ge 6$, $U(D_{21}^-)$ is isomorphic to the (commutative) polynomial algebra $\field[a,b]$,
but there is some noncommutativity in degrees $n \le 5$.
\end{proof}

\begin{remark}
We did similar computations for $D^-_{31}$.
The original set of 36 homogeneous generators produced 278 distinct non-trivial compositions;
after self-reduction the 314 generators became a Gr\"obner basis of 94 elements.
It follows that $U( D^-_{31} )$ is $\mathbb{Z}$-graded and infinite dimensional,
and that for degree $n \ge 6$, the dimension of the homogeneous subspace is $\binom{n+2}{2}$.
\end{remark}

\begin{conjecture}
For all $p \ge 2$, the universal associative envelope $U( D^-_{p1} )$ is $\mathbb{Z}$-graded and infinite dimensional,
and for degree $n \ge 6$, the dimension of the homogeneous component is $\binom{n+p-1}{p-1}$, equal to that of
the (commutative) polynomial algebra in $p$ variables.
\end{conjecture}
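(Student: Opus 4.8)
The plan is to reduce the statement to an orbit‑counting problem for the symmetric group. First I would extract the structural input from the matrix model. In the natural representation of $D^-_{p1}$ by $(p{+}2)\times(p{+}2)$ matrices the basis vectors are $a_k = E_{k,p+2}-E_{p+1,k}$ ($1\le k\le p$), so $a_k a_\ell = -\delta_{k\ell}\,E_{p+1,p+2}$ and \emph{every} product of three generators vanishes. Hence $[a,b,c,d]=abcd-dcba=0$ for all basis vectors, and
  \[
  U(D^-_{p1})\;\cong\;\mathbb{F}\langle a_1,\dots,a_p\rangle\big/\big(\,a_ia_ja_ka_\ell-a_\ell a_k a_j a_i : 1\le i,j,k,\ell\le p\,\big).
  \]
The defining relations are homogeneous of degree $4$, so $U(D^-_{p1})$ is $\mathbb{Z}$‑graded; and since the powers $a_1^{\,n}$ occur in no relation they are linearly independent, so the envelope is infinite dimensional.

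To compute the graded dimension I would argue as follows. The degree‑$n$ part of the relation ideal is spanned by the differences $W-W'$ of length‑$n$ monomials that agree except on a window of four consecutive letters which is reversed; reversing positions $i,\dots,i{+}3$ is the permutation $\rho_i:=(i\ i{+}3)(i{+}1\ i{+}2)$ of $\{1,\dots,n\}$. Inside each orbit of the group $G_n:=\langle\rho_1,\dots,\rho_{n-3}\rangle\le S_n$ the span of the elements $x-gx$ is the augmentation subspace, of codimension $1$; since these orbit spaces are independent, this gives
  \[
  \dim U(D^-_{p1})_n\;=\;\#\{\text{orbits of }G_n\text{ on the }p^{\,n}\text{ words of length }n\}.
  \]

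The crux — and the step I expect to be the real obstacle — is to identify $G_n$, and I would show $G_n=A_n$ for $n\ge6$ by induction on $n$. Each $\rho_i$ is a product of two transpositions, so $G_n\subseteq A_n$ always, and $G_n$ is transitive since the supports of the $\rho_i$ form an overlapping chain. For the base case $n=6$: $G_6$ contains the $5$‑cycle $\rho_1\rho_2=(1\,4\,2\,5\,3)$, hence $5\mid|G_6|$; a transitive subgroup of $S_6$ with order divisible by $5$ is one of $A_5,S_5,A_6,S_6$; those contained in $A_6$ are $A_6$ and the transitive copies of $A_5\cong\mathrm{PSL}_2(5)$, which have no element of order $4$; since $\rho_1\rho_2\rho_3=(1\,4\,3\,6)(2\,5)\in G_6$ has order $4$, we get $G_6=A_6$. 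For the inductive step with $n-1\ge6$, the subgroup $\langle\rho_1,\dots,\rho_{n-4}\rangle$ equals $G_{n-1}=A_{n-1}$ fixing $n$; conjugating the $3$‑cycle $(1,2,n{-}3)\in A_{n-1}$ by $\rho_{n-3}$ produces $(1,2,n)$, then $2$‑transitivity of $A_{n-1}$ produces every $(i,j,n)$, and one further conjugation produces every $(1,i,j)$, so $G_n\supseteq A_n$. This fails at $n=5$, where $G_5=\langle(1\,4)(2\,3),(2\,5)(3\,4)\rangle$ is dihedral of order $10$ — which is exactly why the statement begins in degree $6$.

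It remains to count the $A_n$‑orbits on words, which is routine. The $S_n$‑stabilizer of a word of multidegree $(n_1,\dots,n_p)$ is the Young subgroup $S_{n_1}\times\cdots\times S_{n_p}$; when some $n_i\ge2$ this contains a transposition, so $A_n\cdot\mathrm{Stab}=S_n$ and the $A_n$‑orbit of the word equals the full multidegree class. If $n>p$ every multidegree summing to $n$ has a part $\ge2$, so the orbits are exactly the $\binom{n+p-1}{p-1}$ multidegree classes, the sorted monomials $a_1^{\,e_1}\cdots a_p^{\,e_p}$ give a linear basis, and we obtain the claimed isomorphism with the polynomial algebra. The one genuinely delicate point, which I would flag explicitly, is the range $6\le n\le p$ (vacuous unless $p\ge6$): there each of the $\binom{p}{n}$ multidegree classes using $n$ distinct letters splits into two $A_n$‑orbits, so $\dim U(D^-_{p1})_n=\binom{n+p-1}{p-1}+\binom{p}{n}$. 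Thus the proof establishes the conjectured formula verbatim in the stable range $n>p$ — in particular for all $n\ge6$ when $p\le5$ — and shows it needs the extra term $\binom{p}{n}$ otherwise.
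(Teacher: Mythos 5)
The paper does not actually prove this statement: it is a conjecture extrapolated from the Gr\"obner-basis computations for $D^-_{21}$ and $D^-_{31}$, so there is no proof of record to compare with. Your argument is a genuinely different, conceptual route, and as far as I can check it is correct. The structural reduction is right: all triple (hence quadruple) products of the basis matrices of $D^-_{p1}$ vanish, so $U(D^-_{p1})$ is the free algebra on $a_1,\dots,a_p$ modulo the homogeneous degree-$4$ relations $w-\bar w$ (reversal of a word of length $4$); the degree-$n$ part of the relation ideal is spanned by differences of words differing by reversal of a window of four consecutive letters; and the graded dimension is therefore the number of orbits on $p$-ary words of length $n$ of $G_n=\langle (i\;i{+}3)(i{+}1\;i{+}2)\colon 1\le i\le n-3\rangle$. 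Your identification $G_n=A_n$ for $n\ge 6$ checks out; if you prefer not to invoke the classification of transitive subgroups of $S_6$, note instead that $|G_6|$ is divisible by $\mathrm{lcm}(3,4,5)=60$, that simplicity of $A_6$ rules out subgroups of index $2$ or $3$, and that its index-$6$ subgroups are copies of $A_5$, which contain no element of order $4$; the failure $G_5\cong D_5$ (order $10$) indeed explains why degree $6$ is the threshold.

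The important consequence, which you flag correctly, is that the conjecture as stated needs an amendment for $p\ge 6$: in degrees $6\le n\le p$ the multidegree classes with all letters distinct have trivial Young stabilizer and hence split into two $A_n$-orbits, so $\dim U(D^-_{p1})_n=\binom{n+p-1}{p-1}+\binom{p}{n}$; for instance, for $p=n=6$ the span of the words in six distinct generators has dimension $2$, not $1$, in the quotient. The paper's evidence ($p=2,3$) lies entirely in the stable range $n>p$, where your orbit count reproduces the Gr\"obner-basis results ($n+1$ for $p=2$, $\binom{n+2}{2}$ for $p=3$). Thus your approach proves the conjecture verbatim for $2\le p\le 5$ (indeed whenever $n>p$), identifies the correction term $\binom{p}{n}$ otherwise, and does so uniformly in $p$, whereas the paper's Gr\"obner method yields explicit monomial normal forms but only one value of $p$ at a time.
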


\begin{openproblem}
Study the representation theory of the anti-Jordan triple systems $D^-_{p1}$
and their universal associative envelopes $U( D^-_{p1} )$.
Comparing our results with \cite{Elgendy1} suggests that
$U( D^-_{p1} )$ is likely to be a (generalized) down-up algebra.
\end{openproblem}

\begin{remark}
We did similar computations for $D^-_{12}$.
The original set of 36 non-homogeneous generators produced 341 distinct non-trivial compositions;
after self-reduction the 377 generators became 19 elements.
This set produced one more non-trivial composition, resulting in a Gr\"obner basis of 20 elements:
  \begin{align*}
  &
  c^2, \quad
  a^3, \quad
  a^2b, \quad
  aba, \quad
  ab^2, \quad
  aca, \quad
  ba^2, \quad
  bab, \quad
  b^2a, \quad
  b^3,
  \\
  &
  bca + acb, \quad
  bcb, \quad
  cac, \quad
  cbc, \quad
  acba + a^2cb - a, \quad
  bacb + acb^2 - b,
  \\
  &
  cbac - cabc - c, \quad
  a^2cb^2 - ab, \quad
  acb^2c - bc, \quad
  ca^2cb - ca.
  \end{align*}
From this it follows that $U( D^-_{12} )$ has dimension 26.
\end{remark}

\begin{openproblem}
Study the representation theory of the anti-Jordan triple systems $D^-_{pq}$ for $q \ge 2$.
In particular, do we always have $U( D^-_{pq} ) \cong \field \oplus M_{p+2q}(\field)$?
\end{openproblem}

\begin{proposition}
We have $U( C^-_{111} ) \cong \field \oplus 2\,M_3(\field)$, and hence up to isomorphism,
$C^-_{111}$ has only three finite dimensional irreducible representations: 1-dimensional trivial,
3-dimensional natural, and another 3-dimensional.
\end{proposition}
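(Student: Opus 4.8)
The plan is to follow the strategy already used for Propositions~\ref{UD11}--\ref{propUA2}: realize $C^-_{111}$ concretely inside $M_3(\field)$, extract its nonzero anti-tetrad products, present the universal envelope by generators and relations, compute a noncommutative Gr\"obner basis, read off a monomial basis for $U(C^-_{111})$, and then apply the Wedderburn machinery of \cite{BremnerWedderburn} to split it into simple ideals.

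First I would take the basis of $C^-_{111}$ given by the matrix units $a = E_{21}$, $b = E_{32}$, $c = E_{13}$ coming from the block pattern \eqref{pqrmatrix} with $p=q=r=1$, and compute all products $[x_1,x_2,x_3,x_4] = x_1x_2x_3x_4 - x_4x_3x_2x_1$ with $x_i \in \{a,b,c\}$. Because the incidence graph of these matrix units is a single $3$-cycle, the only nonzero words of length four are the cyclic rotations $acba$, $bacb$, $cbac$, and for each such word the reversed word expands to $0$; hence one finds exactly three nonzero products, $[a,c,b,a]=a$, $[b,a,c,b]=b$, $[c,b,a,c]=c$, matching (up to the ordering of arguments) the list in Proposition~\ref{propUC111}. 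Thus $U(C^-_{111}) = F\langle a,b,c\rangle / I(G)$ with $G = \{\, xyzw - wzyx - [x,y,z,w] \mid x,y,z,w \in \{a,b,c\}\,\}$.

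Next I would run Buchberger's algorithm with the deterministic conventions fixed after Proposition~\ref{UD11} (process critical pairs in lex order of subscripts; reduce the highest deglex term using the generator whose leading monomial is lowest in deglex). Getting this computation to terminate and collapse to a small self-reduced Gr\"obner basis is the step I expect to carry the real computational weight; by analogy with $U(C_{111})$ I anticipate a Gr\"obner basis of roughly a dozen elements, with leading monomials $a^2,b^2,c^2$, the nine ``bad'' length-three words $aba,aca,\dots,cbc$, and two or three further relations of degrees $4$ and $5$, leaving exactly $19$ standard monomials. The cosets of these monomials form a basis of $U(C^-_{111})$, so $\dim U(C^-_{111}) = 19$.

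Finally, from the structure constants obtained by normal-form reduction of products of standard monomials, I would form the Dickson matrix of \cite[Corollary~12]{BremnerGrobner} and verify that it has full rank, so that the radical vanishes and $U(C^-_{111})$ is semisimple. I would then compute the center via \cite[Corollary~15]{BremnerGrobner}, expecting it to be $3$-dimensional with a basis of orthogonal primitive idempotents consisting of one ``trace-like'' element spanning a $1$-dimensional ideal and two elements each spanning a $9$-dimensional ideal. For each $9$-dimensional ideal I would let a suitable generator act on a $3$-dimensional left ideal, identify a basis of that left ideal with the standard basis of $\field^3$, and solve the resulting linear systems to exhibit the matrix units $E_{ij}$, exactly as in the proof of Proposition~\ref{UD11}. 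This produces the isomorphism $U(C^-_{111}) \cong \field \oplus 2\,M_3(\field)$, and by the standard correspondence between irreducible modules over a semisimple algebra and its simple ideals, the stated classification of finite dimensional irreducible representations. The only genuine obstacle is the Gr\"obner basis computation; once it is known to terminate with a finite standard-monomial set, everything downstream is routine finite-dimensional linear algebra.
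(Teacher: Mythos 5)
Your proposal follows essentially the same route as the paper: the same matrix-unit basis, the same three nonzero anti-tetrad products (yours are equivalent to the paper's via the skew-symmetry, e.g.\ $[b,a,c,b]=-[b,c,a,b]=b$), a noncommutative Gr\"obner basis computation that indeed collapses to a small self-reduced basis (13 elements) with the same 19 standard monomials as for $U(C_{111})$, followed by the Dickson-matrix semisimplicity check and the 3-dimensional center splitting into idempotents generating simple ideals of dimensions 1, 9, 9. The only slips are cosmetic anticipations (for instance there are six, not nine, forbidden words of the form $xyx$), which the actual computation corrects and which do not affect the approach or the conclusion.
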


\begin{proof}
Our standard basis for $C^-_{111}$ is the same as that for $C_{111}$; see the proof of Proposition \ref{propUC111}.
Only three quadrilinear products are nonzero:
  \[
  [ a, c, b, a ] = a, \qquad
  [ b, c, a, b ] = -b, \qquad
  [ c, b, a, c ] = c.
  \]
The original set $G$ of generators for the ideal $I(G)$ contains 36 elements and is already self-reduced.
We obtain 333 distinct nontrivial compositions of these generators;
the resulting set of 369 generators collapses to a self-reduced set of only 13 elements, which is a Gr\"obner basis:
  \begin{align*}
  &
  a^2, \quad
  b^2, \quad
  c^2, \quad
  aba, \quad
  aca, \quad
  bab, \quad
  bcb, \quad
  cac, \quad
  cbc,
  \\
  &
  acba - abca - a, \quad
  bcab - bacb + b, \quad
  cbac - cabc - c, \quad
  cabca + ca.
  \end{align*}
The leading monomials of these elements are the same as those of the Gr\"obner basis in the proof of Proposition \ref{propUC111}.
Hence $U( C^-_{111} )$ has dimension 19, and a basis consists of the same monomials as for $U( C_{111} )$ in the proof of Proposition \ref{propUC111}.
We find that $U( C^-_{111} )$ is semisimple, and its center has dimension 3.
The cosets of these elements form a central basis of orthogonal primitive idempotents:
  \[
  1 + abc - acb - bac + bca + cab - cba, \qquad
  - abc  - bca - cab, \qquad
  acb + bac + cba.
  \]
These elements generate simple two-sided ideals of dimensions 1, 9, 9 respectively.
We omit the isomorphisms of each simple ideal with a full matrix algebra.
\end{proof}

\begin{openproblem}
Study the representation theory of the anti-Jordan triple systems $C^-_{pqr}$.
In particular, do we always have $U( C^-_{pqr} ) \cong \field \oplus 2\,M_{p+q+r}(\field)$?
\end{openproblem}

\begin{proposition}
We have $U( B^-_3 ) \otimes_\field \mathbb{K} \cong \mathbb{K} \oplus 3\,M_3(\mathbb{K})$ where $\mathbb{K} = \field(\beta)$,
$\beta=1+\sqrt{-3}$.
Hence up to isomorphism,
$B^-_3$ has four finite dimensional irreducible representations over $\mathbb{K}$:
1-dimensional trivial, 3-dimensional natural, and two other 3-dimensional.
\end{proposition}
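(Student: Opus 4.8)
The plan is to follow the same strategy used above for $B_2$ and for $C^-_{111}$. First I would fix the standard basis $a,b,c$ of the space of $3\times 3$ skew-symmetric matrices (one alternating matrix $E_{ij}-E_{ji}$ for each pair of indices), which gives the natural representation, and compute all nonzero anti-tetrad products $[x,y,z,w]$ for $x,y,z,w\in\{a,b,c\}$; these are the structure constants of $B^-_3$. The universal envelope is then $U(B^-_3)=F\langle a,b,c\rangle/I(G)$ where $G$ is the already self-reduced set of relations $xyzw+wzyx-[x,y,z,w]$. Note that, in contrast with $D^-_{21}$, some of these relations must have nonzero degree-one part (otherwise $U(B^-_3)$ would be $\mathbb{Z}$-graded and infinite dimensional), so here $U(B^-_3)$ is only filtered.

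Next I would run the deterministic noncommutative Gr\"obner basis procedure of \cite{BremnerGrobner} with the conventions fixed earlier (process the overlap compositions in lexicographic order of the index pair, reducing the highest deglex term using the generator of lowest deglex leading monomial). As in the $B_2$ and $C^-_{111}$ cases, I expect the large intermediate collection of compositions to collapse after self-reduction to a small finite Gr\"obner basis; the monomials of $F\langle a,b,c\rangle$ that avoid every leading monomial as a subword then form a basis of $U(B^-_3)$, and counting them should give $\dim U(B^-_3)=28$, consistent with $1+3\cdot 9$. From the structure constants I would assemble the Dickson matrix of \cite[Corollary 12]{BremnerGrobner} and verify that it has full rank, so the radical is zero and $U(B^-_3)$ is semisimple; then compute the center as the nullspace of the matrix of \cite[Corollary 15]{BremnerGrobner} and find that it is $4$-dimensional.

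The one genuinely new point, already announced by the statement, is that the four one-dimensional central ideals are not defined over $\field$: splitting the center requires factoring minimal polynomials of central elements, and exactly as in the $B_2$ computation one meets a factor of the form $t^2+t+1$ (equivalently $t^2+3t+3$ after a shift), whose roots generate $\mathbb{K}=\field(\beta)$ with $\beta=1+\sqrt{-3}$. Working over $\mathbb{K}$ I would produce four orthogonal primitive central idempotents $\zeta_1,\dots,\zeta_4$, compute the dimensions of the two-sided ideals they generate (expecting $1,9,9,9$), and for each $9$-dimensional ideal exhibit a $3$-dimensional left ideal together with matrix units $E_{ij}$ ($1\le i,j\le 3$) realizing the isomorphism with $M_3(\mathbb{K})$, while the $1$-dimensional ideal is $\mathbb{K}$ itself. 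This yields $U(B^-_3)\otimes_\field\mathbb{K}\cong\mathbb{K}\oplus 3\,M_3(\mathbb{K})$, and the classification of finite-dimensional irreducible representations over $\mathbb{K}$ follows from the Wedderburn decomposition. The main obstacle is the usual one for these envelopes: guaranteeing termination and completeness of the Gr\"obner basis (all compositions reducing to zero) for a presentation on three generators with degree-$4$ relations, and controlling coefficient growth, which is handled, as elsewhere in the paper, by running the procedure over a suitable $\mathbb{F}_p$ and reconstructing the result over $\field$ (and, for the idempotents, over $\mathbb{K}$).
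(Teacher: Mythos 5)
Your proposal follows essentially the same route as the paper's proof: the same basis of $3\times 3$ skew-symmetric matrices, the deterministic Gr\"obner basis computation (the paper's basis has 30 elements), the count of 28 standard monomials, semisimplicity via the Dickson matrix, a 4-dimensional center split over $\mathbb{K}=\field(1+\sqrt{-3})$ into orthogonal primitive idempotents, and identification of the simple summands. Your expected ideal dimensions $1,9,9,9$ are the correct ones consistent with $\dim U(B^-_3)=28$ and with $\mathbb{K}\oplus 3\,M_3(\mathbb{K})$ (the paper's phrase ``dimensions 1, 4, 4, 4'' at that point is evidently a slip carried over from the $B_2$ case).
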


\begin{proof}
Our standard basis for $B^-_3$ consists of these three $3 \times 3$ matrices defining the natural representation:
  \[
  a = \begin{bmatrix} 0 & -1 & 0 \\ 1 & 0 & 0 \\ 0 & 0 & 0 \end{bmatrix}, \qquad
  b = \begin{bmatrix} 0 & 0 & -1 \\ 0 & 0 & 0 \\ 1 & 0 & 0 \end{bmatrix}, \qquad
  c = \begin{bmatrix} 0 & 0 & 0 \\ 0 & 0 & -1 \\ 0 & 1 & 0 \end{bmatrix}.
  \]
The original set $G$ of generators for the ideal $I(G)$ contains 36 elements and is already self-reduced.
We obtain 385 distinct nontrivial compositions of these generators;
the resulting set of 421 generators collapses to a self-reduced set of 30 elements, which is a Gr\"obner basis:
  \begin{align*}
  &
  aba, \quad
  aca, \quad
  ac^2 + ab^2 - a^3, \quad
  bab, \quad
  b^2a + ab^2 - a^3, \quad
  b^3 - ba^2 - a^2b, \quad
  bca + acb,
  \\
  &
  bcb, \quad
  bc^2 - a^2b, \quad
  ca^2 - b^2c, \quad
  cab + bac, \quad
  cac, \quad
  cba + abc, \quad
  cb^2 - a^2c, \quad
  cbc,
  \\
  &
  c^2a - ab^2, \quad
  c^2b - ba^2, \quad
  c^3 - b^2c - a^2c, \quad
  a^2cb - a^2bc - a, \quad
  ab^2c, \quad
  ba^3 - a^3b - c,
  \\
  &
  ba^2c, \quad
  bacb + a^3c - b, \quad
  a^5 + cb - bc, \quad
  a^4b + ac, \quad
  a^4c - ab, \quad
  a^3b^2 + cb,
  \\
  &
  a^3bc +\tfrac12 c^2 -\tfrac12 b^2 +\tfrac12 a^2, \quad
  ba^2b^2 - ca, \quad
  ba^2bc + ba.
  \end{align*}
Only 28 monomials in $F\langle a, b, c \rangle$ do not have a leading monomial of a Gr\"obner basis element as a subword:
  \begin{align*}
  &
  1, \quad
  a, \quad
  b, \quad
  c, \quad
  a^2, \quad
  ab, \quad
  ac, \quad
  ba, \quad
  b^2, \quad
  bc, \quad
  ca, \quad
  cb, \quad
  c^2, \quad
  a^3, \quad
  a^2b, \quad
  a^2c,
  \\
  &
  ab^2, \quad
  abc, \quad
  acb, \quad
  ba^2, \quad
  bac, \quad
  b^2c, \quad
  a^4, \quad
  a^3b, \quad
  a^3c, \quad
  a^2b^2, \quad
  a^2bc, \quad
  ba^2b.
  \end{align*}
Hence $U( B^-_3 )$ has dimension 28; as before, we find that $U( B^-_3 )$ is semisimple.
The center has dimension 4; finding a basis of orthogonal primitive idempotents requires a quadratic extension of $\field$.
The required basis is
  \begin{align*}
  &\!\!\!\!
  1 +  abc -  acb -  bac,
  \\
  -\tfrac16
  &\big( a^2 + b^2 + c^2 + 2  abc - 2  acb - 2  bac - 2  a^4 - 2  ba^2b \big),
  \\
  \tfrac1{12}
  &\big( \beta ( a^2 + b^2 + c^2 ) -4  abc +4  acb +4  bac -2\bar\beta ( a^4 + ba^2b ) \big),
  \\
  \tfrac1{12}
  &\big( \bar\beta ( a^2 + b^2 + c^2 ) -4  abc +4  acb +4  bac -2\beta ( a^4 + ba^2b ) \big),
  \end{align*}
where $\beta = 1+\sqrt{-3}$.
These four elements generate simple two-sided ideals of dimensions 1, 4, 4, 4 respectively.
We omit the calculation of the isomorphisms of each simple ideal with a full matrix algebra.
\end{proof}

\begin{openproblem}
Study the representation theory of the anti-Jordan triple systems $B^-_n$.
In particular, over a quadratic extension $\mathbb{K}$ of $\field$ do we always have
$U( B^-_n ) \otimes_\field \mathbb{K} \cong \mathbb{K} \oplus 3\,M_n(\mathbb{K})$?
\end{openproblem}

\begin{proposition}
We have $U( A^-_2 ) \otimes_\field \mathbb{K} \cong \mathbb{K} \oplus 6\,M_2(\mathbb{K})$ where $\mathbb{K} = F(\beta)$,
$\beta = 1+\sqrt{-3}$.
Hence up to isomorphism,
$A^-_2$ has seven finite dimensional irreducible representations over $\mathbb{K}$:
1-dimensional trivial, 2-dimensional natural, and five other 2-dimensional.
\end{proposition}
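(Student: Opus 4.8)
The plan is to follow verbatim the template of the proof of Proposition~\ref{propUA2}, since $A^-_2$ has the same underlying space as $A_2$ (all $2 \times 2$ matrices) and differs only by the sign in the reversal. First I would fix the basis $a = e_{11}$, $b = e_{12}$, $c = e_{21}$, $d = e_{22}$ of $A^-_2$ in its natural representation and compute the anti-tetrad $[x,y,z,w] = xyzw - wzyx$ on all quadruples of basis elements; this produces the finite set of relations
\[
G = \big\{\, xyzw - wzyx - [x,y,z,w] \mid x,y,z,w \in \{a,b,c,d\} \,\big\}
\]
generating the defining ideal $I(G)$ of the universal envelope $U(A^-_2) = F\langle a,b,c,d\rangle / I(G)$.

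Next I would run the deterministic noncommutative Gr\"obner basis algorithm of \cite{BremnerGrobner} (with a term order refining degree, processing overlap pairs in a fixed order and always reducing the larger of two overlapping leading terms). By analogy with $U(A_2)$ I expect the self-reduced starting set to have roughly $130$ elements, to generate on the order of a few thousand nontrivial compositions, and to collapse after self-reduction to a Gr\"obner basis of about two dozen elements. Enumerating the monomials in $F\langle a,b,c,d\rangle$ that are not divisible by any leading monomial of this basis then gives an explicit $F$-basis of $U(A^-_2)$, and I expect $\dim U(A^-_2) = 25$ as for $U(A_2)$.

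With the structure constants in hand I would compute the Dickson matrix of \cite[Corollary~12]{BremnerGrobner}; showing it has full rank proves that $U(A^-_2)$ is semisimple. I would then compute the center using \cite[Corollary~15]{BremnerGrobner}, expecting $\dim Z(U(A^-_2)) = 7$. To produce a complete set of orthogonal primitive idempotents I would split $Z(U(A^-_2))$ one factor at a time, exactly as in the $U(B_2)$ and $U(A_2)$ proofs: at each stage I take a generator of the current central ideal, compute its minimal polynomial, and factor it. I expect these minimal polynomials to factor over $\field$ into linear factors together with a single irreducible quadratic of the form $t^2 + 3t + 3$ (with roots $\tfrac12(-3 \pm \sqrt{-3})$), so that the quadratic extension $\mathbb{K} = \field(\beta)$, $\beta = 1+\sqrt{-3}$, suffices to split everything. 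Over $\mathbb{K}$ the seven primitive idempotents decompose $U(A^-_2) \otimes_\field \mathbb{K}$ into seven simple two-sided ideals of dimensions $1, 4, 4, 4, 4, 4, 4$; exhibiting for each $4$-dimensional ideal a minimal left ideal and matching matrix units to its action gives the isomorphism with $M_2(\mathbb{K})$, which establishes $U(A^-_2) \otimes_\field \mathbb{K} \cong \mathbb{K} \oplus 6\,M_2(\mathbb{K})$. The classification of irreducible representations is then immediate.

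The main obstacle is the scale of the Gr\"obner basis computation: as for $U(A_2)$, the thousands of S-polynomials make this infeasible by hand, so it must be done by computer algebra, using modular arithmetic to control coefficient growth and rational reconstruction afterwards; one has to check that no information is lost this way, which is safe here because the final envelope is finite dimensional with small integer structure constants. A secondary subtlety, not automatic in general, is verifying that one quadratic extension genuinely suffices to split the center; this requires factoring the relevant minimal polynomials explicitly rather than appealing to a general principle.
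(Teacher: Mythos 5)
Your proposal follows essentially the same route as the paper's proof: construct $U(A^-_2)$ from the 120 relations (your estimate of roughly 130 is close; the paper gets 2821 compositions collapsing to a 28-element Gr\"obner basis), find the 25 standard monomials, verify semisimplicity and the 7-dimensional center, and split the center over $\mathbb{K}=\field(\beta)$ into idempotents generating ideals of dimensions $1,4,4,4,4,4,4$. The approach and all the key checkpoints (dimension 25, semisimplicity, center of dimension 7, one quadratic extension sufficing) match the paper's argument.
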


\begin{proof}
Our standard basis for $A^-_2$ consists of the four $2 \times 2$ matrix units as in the proof of Proposition \ref{propUA2}.
The original set $G$ of generators for the ideal $I(G)$ contains 120 elements and is already self-reduced.
We obtain 2821 distinct nontrivial compositions of these generators;
the resulting set of 2941 generators collapses after three iterations to a self-reduced set of only 28 elements, which is a Gr\"obner basis:
  \begin{align*}
  &
  ad, \quad
  b^2, \quad
  bd - ab, \quad
  c^2, \quad
  cd - ac, \quad
  da, \quad
  db - ba, \quad
  dc - ca,
  \\
  &
  d^2 - cb - bc + a^2, \quad
  aba, \quad
  aca, \quad
  acb + abc - a^3, \quad
  bab, \quad
  bca - abc,
  \\
  &
  bcb - ba^2 - a^2b, \quad
  cac, \quad
  cba + abc - a^3, \quad
  cbc - ca^2 - a^2c, \quad
  a^2bc - \tfrac12 a^4 - \tfrac12 a,
  \\
  &
  ba^3 - a^3b + b, \quad
  ba^2b, \quad
  ca^3 - a^3c - c, \quad
  ca^2b - ba^2c - d, \quad
  ca^2c,
  \\
  &
  cabc - a^3c - c, \quad
  a^4b - ab, \quad
  a^4c + ac, \quad
  a^6 -2 abc + a^3.
  \end{align*}
Only 25 monomials in $F\langle a, b, c, d \rangle$ do not have a leading monomial of a Gr\"obner basis element as a subword:
  \begin{align*}
  &
  1, \quad
  a, \quad
  b, \quad
  c, \quad
  d, \quad
  a^2, \quad
  ab, \quad
  ac, \quad
  ba, \quad
  bc, \quad
  ca, \quad
  cb, \quad
  a^3, \quad
  a^2b,
  \\
  &
  a^2c, \quad
  abc, \quad
  ba^2, \quad
  bac, \quad
  ca^2, \quad
  cab, \quad
  a^4, \quad
  a^3b, \quad
  a^3c, \quad
  ba^2c, \quad
  a^5.
  \end{align*}
Hence $U( A^-_2 )$ has dimension 25; as before, we find that $U( A^-_2 )$ is semisimple.
The center has dimension 7; splitting it requires a quadratic extension of $\field$.
We obtain this central basis of orthogonal primitive idempotents:
  \begin{align*}
  &\!\!\!\!
  1 + a^3 - 2 abc + bac - cab,
  \\
  \tfrac16 &\big( a + 2 d + 2 cb + 2 abc + 2 cab + a^4 + 2 ba^2c + 2 a^5 \big),
  \\
  -\tfrac16 &\big( a -2 bc + 2 a^3 - 2 abc + 2 bac - a^4 - 2 ba^2c + 2 a^5 \big),
  \\
  \tfrac1{12} &\big(
        \beta  a
  -2\bar\beta  bc
           -4  a^3
           +4  abc
           -4  bac
       -\beta  a^4
      -2\beta  ba^2c
  +2\bar\beta  a^5
  \big),
  \\
  \tfrac1{12} &\big(
    \bar\beta  a
      -2\beta  bc
           -4  a^3
           +4  abc
           -4  bac
   -\bar\beta  a^4
  -2\bar\beta  ba^2c
      +2\beta  a^5
  \big),
  \\
  -\tfrac1{12} &\big(
        \beta  a
      +2\beta  d
  +2\bar\beta  cb
           -4  abc
           -4  cab
       +\beta  a^4
      +2\beta  ba^2c
  +2\bar\beta  a^5
  \big),
  \\
  -\tfrac1{12} &\big(
    \bar\beta  a
  +2\bar\beta  d
      +2\beta  cb
           -4  abc
           -4  cab
   +\bar\beta  a^4
  +2\bar\beta  ba^2c
      +2\beta  a^5
  \big),
  \end{align*}
where $\beta = 1 + \sqrt{-3}$.
These seven elements generate simple two-sided ideals of dimensions 1, 4, 4, 4, 4, 4, 4 respectively.
We omit the calculation of the isomorphisms of each simple ideal with a full matrix algebra.
\end{proof}

\begin{openproblem}
Study the representation theory of the anti-Jordan triple systems $A^-_n$.
In particular, over a quadratic extension $\mathbb{K}$ of $\field$ do we always have
$U( A^-_n ) \otimes_\field \mathbb{K} \cong \mathbb{K} \oplus 6\,M_n(\mathbb{K})$?
\end{openproblem}


\end{document}